\documentclass[10pt]{amsart}
\setlength{\oddsidemargin}{.25in}
\setlength{\evensidemargin}{.25in} 
\setlength{\topmargin}{.25in}
\setlength{\headsep}{.25in}
\setlength{\headheight}{0in}
\setlength{\textheight}{8.5in}
\setlength{\textwidth}{6in} 
\setlength{\parindent}{.5in}
\raggedbottom 
\usepackage{Angelini-Knoll}
 
\author{Gabe Angelini-Knoll}
\title{On Topological Hochschild homology of the $K(1)$-local sphere}

\def\Sp{\operatorname{Sp}}  

\usepackage[colorlinks=true, linkcolor=blue, menucolor=blue]{hyperref}

\usepackage{cleveref}
    
\usepackage{amsrefs}
\begin{document}
\singlespacing
\maketitle
\begin{abstract}
We compute mod $(p,v_1)$ topological Hochschild homology of the connective cover of the $K(1)$-local sphere spectrum for all primes $p\ge 3$. This is accomplished using a May-type spectral sequence in topological Hochschild homology constructed from a filtration of a commutative ring spectrum.  
\end{abstract}

\tableofcontents

\section{Introduction}
Algebraic K-theory of rings is known to encode deep arithmetic information; for example, algebraic K-theory groups of rings of integers in totally real number fields are related to special values of Dedekind zeta functions by the Lichtenbaum--Quillen conjecture. Following the Ausoni--Rognes program \cite{MR1947457}, we would like to explore the arithmetic encoded in the algebraic K-theory of ``brave new rings'' or, more precisely, 
ring spectra. To approach this, we use a technique, initiated by B\"okstedt in \cite{bok2}, where one approximates algebraic K-theory by topological Hochschild homology. Specifically, topological Hochschild homology is a linear approximation to algebraic K-theory in the sense of Goodwillie's calculus of functors by \cite{MR1307900}.
The purpose of this paper is to compute topological Hochschild homology of the connective cover of the $K(1)$-local sphere mod $(p,v_1)$. Here $K(1)$ is the first Morava K-theory which has coefficients $K(1)_*\cong \mathbb{F}_p[v_1^{\pm 1}]$.  

This computation uses a spectral sequence associated to a multiplicative filtration constructed by the author and A. Salch in \cite{thhmay}. The idea of the spectral sequence is to mimic the construction of May where he filters a Hopf algebra by powers of the augmentation ideal and constructs an associated filtration of the bar construction. In particular, the author and A. Salch show in \cite{thhmay} that there is a model for the Whitehead tower of a connective commutative ring spectrum as a filtered commutative ring spectrum. From this filtered commutative ring spectrum, the author and A. Salch produce a filtration of a generalized bar construction, for example the cyclic bar construction, and construct a spectral sequence in (higher) topological Hochschild homology. This work is summarized in Section \ref{section 2}. 

In the 1980's, F. Waldhausen first suggested computing algebraic K-theory of connective and non-connective versions of the localizations of the sphere spectrum appearing in the chromatic tower as an approach to computing algebraic K-theory of the sphere spectrum \cite{MR764579}. There are homology theories $E(n)$, depending on a nonnegative integer $n$ and a prime $p$, called the Johnson-Wilson $E$-theories with coefficients $\mathbb{Z}_{(p)}[v_1,v_2,\ldots , v_{n-1},v_n^{\pm 1}]$. Using Bousfield localization one can construct spectra $L_{E(n)}S_{(p)}$ with maps $L_{E(n)}S_{(p)}\rightarrow L_{E(n-1)}S_{(p)}$ for each $n$. The homotopy limit gives a good approximation to the sphere spectrum in the sense that 
\[ S_{(p)} \lra \underset{\leftarrow}{\holim}\hspace{.05in} L_{E(n)}S_{(p)} \]
is a weak equivalence due to the chromatic convergence theorem of M. Hopkins and D. Ravenel \cite[Thm. 7.5.7]{rav2}. In the connective case, the limit is also known to converge after applying algebraic K-theory in the sense that the map 
\[ K(S_{(p)})\lra \underset{\leftarrow}{\holim}\hspace{.05in} K(\tau_{\ge 0} L_{E(n)}S_{(p)} ) \]
is a weak equivalence \cite[Thm. 2.8]{MR1164148}, where $\tau_{\ge 0}L_{E(n)}S_{(p)}$ is the connective cover of $L_{E(n)}S_{(p)}$. By convention $\tau_{\ge 0}L_{E(0)}S_{(p)}$ is $H\mathbb{Z}_{(p)}$ and the first map is the linearization map $\tau_{\ge 0}L_{E(1)}S_{(p)}\rightarrow H\mathbb{Z}_{(p)}$. This gives a method for organizing the information in the algebraic K-theory of the sphere spectrum in a similar way to how the $E(n)$-localizations organize the information in the homotopy groups of spheres according to ``chromatic height.''

We can 
approximate $K(\tau_{\ge 0}L_{E(1)}S_{(p)})$ using the trace methods approach initiated by B\"okstedt  \cite{bok2}. In other words, we can 
approximate algebraic K-theory by topological cyclic homology, which is an invariant that is built out of topological Hochschild homology (THH) using the cyclotomic structure on THH. Since topological cyclic homology isn't sensitive to the difference between $R$ and its $p$-completion $R_p$, 
we will compute algebraic K-theory of 
the $p$-completion $(\tau_{\ge 0}L_{E(1)}S_{(p)})_p$, 
which is equivalent to $\tau_{\ge 0}L_{K(1)}S$. Moreover, one can compute algebraic K-theory of $\tau_{\ge 0}L_{E(1)}S_{(p)}$ using the homotopy pullback 
\[ 
	\xymatrix{ 
		K(\tau_{\ge 0}L_{E(1)}S_{(p)})_p \ar[r] \ar[d] & K(\tau_{\ge 0}L_{K(1)}S)_p \ar[d] \\
		K(H\mathbb{Z}_{(p)})_p  \ar[r] & K(H\mathbb{Z}_p)_p
		}
\]
due to Dundas, which appears in \cite[Thm. 6.7]{MR2079370}.

We now briefly outline the trace methods approach for computing algebraic K-theory of $p$-complete commutative ring spectra. We may model topological Hochschild homology of a commutative ring spectrum $R$ as the tensoring $S^1\otimes R$, in commutative ring spectra, with the circle
by the main theorem of \cite{MR1473888}. The group $S^1$ therefore acts on $S^1\otimes R$ on the first coordinate. Additionally, $THH(R)$ is a cyclotomic spectrum 
which allows us to produce maps 
\[ \xymatrix{ THH(R)^{C_{p^n}} \ar@<.5ex>[r]^(.45){F} \ar@<-.5ex>[r]_(.45){R} & THH(R)^{C_{p^{n-1}}} }  \]
where $F$ is given by inclusion of fixed points and the map $R$ is constructed using the cyclotomic structure. We may then form $p$-typical topological cyclic homology
\[ TC(R;p) := \underset{\underset{F,R}{\longleftarrow}}{\lim}\thinspace THH(R)^{C_{p^n}}.\]
For more details about this construction, see \cite{MR1410465}. 

The key step in approximating algebraic K-theory is the use of the theorem of B. Dundas- T. Goodwillie, and R. McCarthy \cite{DGM}, which together with work of L. Hesselholt and I. Madsen \cite{MR1410465} produces the following result
\begin{thm}[Theorem VII 3.1.14 \cite{DGM} ] \label{DGM thm}
If  $R$ is a connective commutative ring spectrum and there is a surjection $\mathbb{Z}_p\to \pi_0R$ then there is an equivalence 
\[ K(R)_p\simeq \tau_{\ge 0} TC(R;p)_p \] 
where $\tau_{\ge 0}$ is the connective cover functor.
\end{thm}

The spectrum $\tau_{\ge 0}L_{K(1)}S$ is also modeled by algebraic K-theory of certain finite fields. For this explanation, let $p$ be an odd prime and let $q$ be a prime power that also topologically generates $\mathbb{Z}_p^{\times}$. Due to D. Quillen \cite{MR0315016}, it is known that algebraic K-theory of a finite field of order $q$  can be computed, after $p$-completion, using the fiber sequence 
\[ \xymatrix{ K(\mathbb{F}_q)_p \ar[r] & ku_p \ar[r]^{1-\psi_q} & \Sigma^{2}ku_p} \] 
$\psi_q$ is the $q$-th Adams operation and $ku_p$ is the $p$-completion of connective complex K-theory. Now, E. Devinatz and M. Hopkins showed in \cite{MR2030586} that there is an equivalence 
\[ L_{K(1)}S \simeq KU_p^{h\mathbb{G}_1} \] 
where $\mathbb{G}_1$ is the height $1$ Morava stabilizer group, which is isomorphic to $\mathbb{Z}_p^{\times}$, and $KU_p$ is the $p$ completion of periodic complex $K$-theory. Note that the Morava stabilizer group $\mathbb{G}_1=\mathbb{Z}_p^{\times}$ acts on $KU_p$ by Adams operations, so we may write $\psi_{q}$ for the Adams operation corresponding to the topological generator $q$ of $\mathbb{Z}_p^{\times}$ \cite{MR3328537}. The homotopy fixed points can therefore be modeled by the fiber sequence
\[ \xymatrix{ L_{K(1)}S \ar[r] & KU_p \ar[r]^{1- \psi_{q} } & KU_p. }\] 
Under the stated conditions on $q$ and $p$, there is therefore a map of fiber sequences  
\[ 
\xymatrix{
K(\mathbb{F}_q)_p \ar[r] \ar[d] & ku_p \ar[r]^{1-\psi_q} \ar[d] & \Sigma^2 ku_p \ar[d]\\
L_{K(1)}S \ar[r] & KU_p \ar[r]^{1-\psi_q} &  KU_p . } \] 
By examining homotopy groups of each of the fibers, we see that the map of fibers induces an isomorphism in homotopy in degrees greater or equal to zero. The homotopy groups of $L_{K(1)}S$ agree with Quillen's computation 
\[ K_*(\mathbb{F}_q) \cong \left \{ \begin{array}{ll} \mathbb{Z} & \text{ if } *=0  \\ \mathbb{Z}/(q^i-1)\mathbb{Z} & \text{ if } *=2i-1  \text{ for  } i \in \mathbb{N} \end{array} \right. \] 
in nonnegative degrees after $p$-completion when $q$ is a prime power that topologically generates $\mathbb{Z}_p^{\times}$. Therefore, there are equivalences 
$K(\mathbb{F}_q)_p \simeq \tau_{\ge 0}L_{K(1)}S\simeq (\tau_{\ge 0}L_{E(1)}S)_p$, where $\tau_{\ge 0}L_{K(1)}S$ denotes the connective cover of the $K(1)$-local sphere. See the proof of Theorem 5 and the Introduction of \cite{MR2030586} for more details. The upshot of this discussion is that the main theorem of this paper a first approximation to iterated algebraic K-theory of a large class of finite fields.

We also compute mod $(p,v_1)$ homotopy of $THH(\tau_{\ge 0}L_{K(1)}S)$ because this is the natural first step in approaching the chromatic red-shift conjecture of C. Ausoni and J. Rognes \cite{MR2499538}. The homology theories  $K(n)$ for each prime $p$ and nonnegative integer $n$ have coefficients
$K(n)_*\cong  \mathbb{F}_p[v_n^{\pm 1}]$ for $n\ge 1$ and   $K(0)_*\cong \mathbb{Q}$ and they are useful for detecting chromatic height. 
We say that a $p$-local finite cell $S$-module $V$ has type $n$ if $K(n-1)_*V\cong 0$ and $K(n)_*V$ is nontrivial. Due to the thick subcategory theorem of M. Hopkins and J. Smith \cite[Thm. 7]{MR1652975}, the category of $p$-local finite cell $S$-modules $\mathcal{F}_{(p)}$ can be filtered into proper thick subcategories 
\[ 0 \subset \dots \subset \cC_2 \subset \cC_1 \subset \cC_0 = \mathcal{F}_{(p)} \] 
where $\cC_n$ contains exactly spectra of type $\ge n$; i.e., it consists of the $K(n-1)$-acyclic spectra \cite{MR960945}. The first examples of type $n$ spectra for small $n$ can be constructed by an iterative procedure by taking cofibers of $v_n$-self maps beginning with $v_0=p$: for example, the cofiber of the multiplication by $p$ map, denoted $S/p$, is a type one spectrum. At odd primes, we can continue this process and construct a type one spectrum $V(1)$ as the cofiber of a periodic self map $v_1\co \Sigma^{2p-2}S/p\rightarrow S/p$.  By M. Hopkins and J. Smith \cite[Thm. 4.11]{MR1652975}, there exists a type $n$ spectrum for each $n$, though not necessarily constructed by the iterative procedure we just discussed. 


To phrase the red-shift conjecture, it is necessary to have a notion of chromatic height for spectra that are not finite cell $S$-modules. We recall a definition due to N. Baas, B. Dundas, and J. Rognes that provides a notion of chromatic height in this context. This uses the fact that every $p$-local finite cell $S$-module $V$ admits a $v_n$-self map for some $n$, a consequence of the periodicity theorem of M. Hopkins, and J. Smith \cite[Thm. 9]{MR1652975}. 
\begin{defin}[Baas--Dundas--Rognes {\cite[Def. 6.1]{MR2079370}}]
Let $X$ be a spectrum and let $\mathcal{T}_X$ be the thick subcategory of finite $p$-local spectra $V$ such that the map
\begin{equation} \label{eq red shift} V\wedge X \rightarrow v_n^{- 1} V\wedge  X \end{equation}
induces an isomorphism in homotopy groups in sufficiently high degrees. Then if $\mathcal{T}_X=\cC_{n}$, we say that $X$ has telescopic complexity $n$. 
\end{defin} 
We now state the conjecture due to C. Ausoni and J. Rognes, which generalizes the Lichtenbaum--Quillen conjecture to higher chromatic heights. 
\begin{conj}[Ausoni--Rognes red-shift conjecture]
If $R$ is a suitably finite $K(n)$-local spectrum, then $K(R)$ has telescopic complexity $n+1$. 
\end{conj}
Note that this version of the conjecture does not apply to the connective cover of the $K(1)$-local sphere directly, however, we can relate it to algebraic K-theory of the $K(1)$-local sphere using a localization sequence in algebraic K-theory \'a la Waldhausen \cite{MR802796}. Using a fracture square argument, we observe that $L_{E(1)}(\tau_{\ge 0}L_{K(1)}S)\simeq L_{K(1)} (\tau_{\ge 0}L_{K(1)}S)\simeq L_{K(1)}S$ and there is therefore a fiber sequence 
\[ K(\mathcal{D}_1)\to K(\tau_{\ge 0}L_{K(1)}S ) \to  K(L_{K(1)}S ) \]
where $\mathcal{D}_1$ is the category of finite $\tau_{\ge 0}L_{K(1)}S$-modules that are $E(1)$-acyclic. This relies on the fact that the telescope conjecture is true at height one and consequently $E(1)$-localization is a finite localization. Using a d\'evissage argument as in Blumberg-Mandell \cite{MR2413133}, one could identify $K(\mathcal{D}_1)$ as algebraic K-theory of a spectrum and then use the localization sequence to compute mod $(p,v_1)$ homotopy of $K(L_{K(1)}S)$. Thus, by determining the telescopic complexity of $K(\tau_{\ge 0} L_{K(1)}S)$ we can verify the Ausoni--Rognes red-shift conjecture in this case. 

We may also phrase the red-shift conjecture using telescopic complexity, as C. Barwick does in \cite{Bar14} in the case of iterated algebraic K-theory of the complex numbers.
\begin{conj}[Telescopic red-shift conjecture] \label{conj}
If $R$ is a commutative ring spectrum with telescopic complexity $n$, then $K(R)$ is a commutative ring spectrum with telescopic complexity $n+1$.
\end{conj}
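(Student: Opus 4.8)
Since Conjecture~\ref{conj} is at present a wide-open problem, what follows is a proposal for the strategy by which one would attack it, together with an indication of where the computation of this paper fits. The only viable method is the trace method of B\"okstedt and Goodwillie: instead of studying $K(R)$ directly, one studies the cyclotomic spectrum $THH(R)$, builds $TC(R;p)$ from its genuine fixed points (equivalently, from $TC^-(R)$ and $TP(R)$), and then invokes the cyclotomic trace $K(R)\to TC(R)$ together with the Dundas--Goodwillie--McCarthy theorem to control its fiber. In this way the telescopic complexity of $K(R)$ is reduced, up to an inductive comparison one chromatic step lower, to the telescopic complexity of $TC(R;p)$, and hence ultimately to the $v_{n+1}$-periodic behavior of $THH(R)$ and of its fixed-point spectra.

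The first step is to compute $THH(R)$ modulo $(p,v_1,\dots,v_n)$ --- ideally as a comodule over the relevant Hopf algebroid, or at least as a module over $K(n)_*$ or over the coefficients of a connective truncation of $BP$. This is precisely the role of the computation of $\pi_*\bigl(THH(\tau_{\ge 0}L_{K(1)}S);\, \mathbb{Z}/(p,v_1)\bigr)$ carried out in this paper: it is the input datum at height $n=1$. The feature one is looking for is the appearance of a class detecting $v_{n+1}$ --- the B\"okstedt-type classes $\mu$ and $\lambda_i$ studied by McClure--Staffeldt and Ausoni--Rognes --- which already at the level of $THH$ exhibits the expected one-step chromatic redshift. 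The May-type spectral sequence recalled in Section~\ref{section 2} is the device that makes this input accessible, since it replaces $R$ by the associated graded of its Whitehead tower, whose topological Hochschild homology is far more computable.

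The second step is to propagate this computation through the homotopy fixed point, Tate, and homotopy orbit spectral sequences for the $S^1$- and $C_{p^k}$-actions on $THH(R)$, to assemble $TC(R;p)$, and then to verify that $V\wedge TC(R)\to v_{n+1}^{-1}V\wedge TC(R)$ is an equivalence in sufficiently high degrees exactly when $V$ has type $n+1$; the Dundas--Goodwillie--McCarthy theorem then transfers the statement to $K(R)$, the base case being Quillen's computation of $K(\mathbb{F}_q)$. I expect the genuine obstacle to be precisely this second step. The homotopy fixed point and Tate spectral sequences extend infinitely in the negative direction, their differentials and multiplicative extensions are hard to pin down, and --- most seriously --- inverting $v_{n+1}$ need not commute with the homotopy limit defining $TC$, so one must separately establish a convergence statement to the effect that the $v_{n+1}$-periodic $TC$ is computed by the $v_{n+1}$-periodic limit. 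It is here that the failure of the telescope conjecture for $n\ge 2$ makes the general case genuinely delicate; at height one, where this paper operates, these difficulties are manageable enough that the present $THH$ computation is a realistic first step toward Conjecture~\ref{conj}.
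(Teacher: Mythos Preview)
The statement is a conjecture, not a theorem, and the paper does not prove it; it merely states it, specializes it to the case $R=\tau_{\ge 0}L_{K(1)}S$, and explains that the main computation of the paper (the mod $(p,v_1)$ homotopy of $THH(\tau_{\ge 0}L_{K(1)}S)$) is the first step toward the $n=1$ case, with the passage to $K$ via trace methods deferred to future work. Your proposal correctly recognizes that no proof exists and outlines exactly the trace-method strategy (compute $THH$, assemble $TC$ from fixed-point data, invoke the cyclotomic trace and Dundas--Goodwillie--McCarthy) that the paper itself points toward; in particular, your identification of the $THH$ computation here as the height-one input datum matches the paper's own framing.
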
 
In the example of interest, we know that 
\[ \pi_k  \left ( S/p \wedge \tau_{\ge 0}L_{K(1)}S \right ) \longrightarrow \pi_k \left ( v_1^{-1} S/p \wedge \tau_{\ge 0}L_{K(1)}S \right ) \]
is an  isomorphism for $k$ sufficiently large. Therefore, $\tau_{\ge 0}L_{K(1)}S$ has telescopic complexity one. Determining the telescopic complexity of $K(\tau_{\ge 0}L_{K(1)}S)$ 
allows us to verify the telescopic complexity red-shift conjecture directly in this case. To determine the telescopic complexity of $K(\tau_{\ge 0}L_{K(1)}S)$, we need to know that 
\begin{equation} \label{eq case of red shift} \pi_k \left ( V(1)\wedge K(\tau_{\ge 0}L_{K(1)}S) \right )\rightarrow \pi_k \left ( v_2^{-1}V(1)\wedge K(\tau_{\ge 0}L_{K(1)}S) \right )\end{equation}
is an isomorphism for $k$ sufficiently large.  
The author plans to verify this in future work as a natural extension of the results of this paper. 

In the present paper, we compute topological Hochschild homology of the connective $K(1)$-local sphere, after smashing with the Smith-Toda complex $V(1)$, as follows. 
\begin{thm}\label{main thm} Let $p$ be an odd prime. There is an isomorphism of graded rings
\[ V(1)_*(THH(\tau_{\ge 0}L_{K(1)}S))\cong P(\mu_2)\otimes \Gamma(\sigma b) \otimes \mathbb{F}_p\{\alpha_1, \lambda_1', \lambda_2\alpha_1, \lambda_2\lambda_1', \lambda_2\lambda_1'\alpha_1 \}  \]
where the products between the classes 
$ \{\alpha_1, \lambda_1', \lambda_2\alpha_1, \lambda_2\lambda_1', \lambda_2\lambda_1'\alpha_1 \} $
are zero except for 
\[ \alpha_1\cdot \lambda_2\lambda_1'=\lambda_1'\cdot \lambda_2\alpha_1 =\lambda_2\lambda_1'\alpha_1.  \] 
\end{thm}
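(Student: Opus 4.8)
The plan is to apply the May-type spectral sequence of \cite{thhmay} to the connective commutative ring spectrum $R:=\tau_{\ge 0}L_{K(1)}S$ with mod $(p,v_1)$ coefficients. The Whitehead-tower model of \cite{thhmay} exhibits $R$ as a filtered commutative ring spectrum whose associated graded $E_0R$ is a generalized Eilenberg--Mac Lane ring spectrum, and filtering the cyclic bar construction accordingly produces a multiplicative spectral sequence
\[ E^1_{*,*}=V(1)_*\,THH(E_0R)\ \Longrightarrow\ V(1)_*\,THH(R). \]
Thus there are three tasks: identify $E_0R$; compute the $E^1$-page $V(1)_*THH(E_0R)$; and run the spectral sequence, resolving the multiplicative extensions.

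\emph{Identifying $E_0R$.} I would use the description of $\pi_*R$ recalled in the introduction: $\pi_0R\cong\mathbb{Z}_p$, $\pi_{2i-1}R$ is a finite cyclic $p$-group when $(p-1)\mid i$, and all other homotopy groups vanish. Since the product of two positive-degree classes in $\pi_*R$ lands in a positive even degree, hence in a trivial group, $\pi_*R$ is a square-zero extension of $\mathbb{Z}_p$; correspondingly $E_0R$ is a generalized Eilenberg--Mac Lane ring spectrum with $\pi_*E_0R\cong\pi_*R$ and trivial products in positive degrees, in effect the trivial square-zero extension $H\mathbb{Z}_p\ltimes M$ with $M=\bigvee_{(p-1)\mid i}\Sigma^{2i-1}H\pi_{2i-1}R$ (checking there are no exotic $k$-invariants is part of this step). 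The genuinely arithmetic input is the $p$-adic valuation of $q^i-1$: for a suitable choice of $q$ (say a prime that is a primitive root modulo $p^2$) this fixes the orders of the cyclic groups $\pi_{2i-1}R$ and hence determines which cells of $E_0R$ survive after smashing with $V(1)$, in particular isolating the bottom class $\alpha_1$ in degree $2p-3$ together with its $v_1$-Bockstein partner.

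\emph{Computing the $E^1$-page.} Since $E_0R$ is an $H\mathbb{Z}_p$-algebra, I would use the base-change formula $THH^{S}(E_0R)\simeq THH^{S}(H\mathbb{Z}_p)\wedge_{H\mathbb{Z}_p}THH^{H\mathbb{Z}_p}(E_0R)$ and compute the two factors mod $(p,v_1)$: the first from B\"okstedt's $THH(H\mathbb{F}_p)_*=\mathbb{F}_p[\mu]$ and the B\"okstedt--Madsen computation of $THH(\mathbb{Z}_p)$ (itself via a B\"okstedt spectral sequence); the second as Hochschild homology of an explicit $\mathbb{Z}_p$-DGA, where the odd-degree torsion generator $b$ of $E_0R$ contributes the divided-power factor $\Gamma(\sigma b)$ and the external classes $\lambda_1',\lambda_2$ arise as $\sigma$-operator images. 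This should yield $V(1)_*THH(E_0R)$ already of the stated shape $P(\mu_2)\otimes\Gamma(\sigma b)\otimes\mathbb{F}_p\{\alpha_1,\lambda_1',\lambda_2\alpha_1,\lambda_2\lambda_1',\lambda_2\lambda_1'\alpha_1\}$ as a bigraded algebra.

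\emph{Running the spectral sequence.} The generators sit in an arithmetically sparse degree pattern (with spacings governed by powers of $p$), so most potential differentials vanish for bidegree reasons, and multiplicativity over $P(\mu_2)\otimes\Gamma(\sigma b)$ reduces the remaining cases to the finitely many external generators. To handle those, and to resolve extensions, I would compare along the $E_\infty$-ring maps $R\to H\mathbb{Z}_p$ (the $0$-truncation) and $R=K(\mathbb{F}_q)_p\to ku_p$ (from the map of fiber sequences in the introduction), importing $V(1)_*THH(\mathbb{Z}_p)$, the Ausoni--Rognes computation of $V(1)_*THH(\ell_p)$ (which already contains $P(\mu_2)$ and an exterior class $\lambda_2$), and Ausoni's computation of $V(1)_*THH(ku_p)$ to force collapse and to locate $\mu_2$ and $\lambda_2$. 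The final and hardest step is the multiplicative structure of the abutment: most products among the five external classes are zero for degree reasons, and the single surviving relation $\alpha_1\cdot\lambda_2\lambda_1'=\lambda_1'\cdot\lambda_2\alpha_1=\lambda_2\lambda_1'\alpha_1$ I would deduce from the Leibniz rule for the $\sigma$-operator together with the comparison to $THH(ku_p)$. The main obstacle I anticipate is precisely this extension problem, together with the upstream identification of $V(1)_*THH(E_0R)$ — in particular the appearance of the primed class $\lambda_1'$ rather than the class occurring in $THH(\ell_p)$, and the divided-power rather than polynomial behaviour of $\sigma b$.
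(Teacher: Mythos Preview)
Your overall framework is right, but the central computation is off: the $E^1$-page $V(1)_*THH(E_0R)$ is \emph{not} already of the final shape. The paper computes it explicitly (via comodule primitives in $(H\mathbb{F}_p\wedge V(1))_*THH(E_0R)$, not via your base-change formula) and obtains
\[
E(\alpha_1,\lambda_1,\epsilon_1,\sigma\tilde v_1)\otimes P(\mu_1,\tilde v_1)\otimes\Gamma(\sigma\alpha_1),
\]
with $|\mu_1|=2p$ and $|\tilde v_1|=2p-2$. This is much larger than the abutment: the class $\mu_2$ only appears as $\mu_1^p$ after differentials kill $\mu_1$, and an entire polynomial tower $P(\tilde v_1)$ has to disappear. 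The underlying reason your heuristic undercounts is that in the associated graded the $v_1$-multiplications coming from $\pi_*R$ have been broken (they raise Whitehead filtration), so smashing with $V(1)$ does not cancel that tower; it survives as a genuine polynomial generator on the input side. Your base-change/DGA approach would, if carried out correctly, reproduce this larger answer, not the final one.

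Because of this, the spectral sequence is far from sparse-collapse. The paper establishes three nontrivial $d_1$'s,
\[
d_1(\lambda_1)\dot=\sigma\alpha_1,\qquad d_1(\epsilon_1)\dot=\tilde v_1,\qquad d_1(\mu_1)\dot=\sigma\tilde v_1,
\]
and then a longer differential $d_{p-1}\bigl((\mu_1)^{p-1}\sigma\tilde v_1\bigr)\dot=\alpha_1\,\lambda_1\gamma_{p-1}(\sigma\alpha_1)$. Your proposed comparison maps to $THH(\mathbb{Z}_p)$, $THH(\ell_p)$, $THH(ku_p)$ do not see these: the $d_1$'s are instead detected by mapping to the $H\mathbb{F}_p\wedge V(1)$-THH-May spectral sequence, whose abutment $(H\mathbb{F}_p)_*THH(j)$ is already known by Angeltveit--Rognes, so the differentials are forced there and pulled back along the injective Hurewicz map. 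The $d_{p-1}$ is subtler still: the paper filters the \emph{coefficients} $j/(p,v_1)$ by $\Sigma^{2p-3}H\mathbb{F}_p\to j/(p,v_1)$, reducing the question to whether a specific boundary map $V(1)_*THH(j;\ell)\to V(1)_{*-2p+2}THH(j;\ell)$ is nonzero; this map is identified with the one induced by $1-\psi_q$ and computed in $H\mathbb{F}_p$-homology as the $P^1$-action. Neither a sparsity argument nor comparison to $THH(ku_p)$ will produce this differential, since both source and target are present in the correct bidegrees and survive under those comparison maps.
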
 
The paper is divided into two sections. Section \ref{section 2} gives a brief summary of the construction of the topological Hochschild-May  spectral sequence and provides a large class of examples of decreasingly filtered commutative ring spectra; i.e., those that can be produced as a multiplicative model for the Whitehead tower of a connective commutative ring spectrum. In Section \ref{section 3}, we provide all the details needed to prove Theorem \ref{main thm}. 
\subsection{Notation and Conventions} 
Throughout, let $\Sp$ be the category of symmetric spectra of pointed simplicial sets. We equip $\Sp$ with the positive stable flat model structure \cite{schwedebook}. Note that $\Sp$ is a closed symmetric monoidal model category with $\wedge$ as symmetric monoidal product and the sphere spectrum $S$ as the unit. We will write $\Comm \mathcal{D}$, for the category of commutative monoids in a symmetric monoidal category $\mathcal{D}$. When a map $X\rightarrow Y$ between objects in $\Sp$ is a cofibration, we will write $Y/X$ for the cofiber.  We will fix a prime $p\ge 3$ throughout. This ensures that the mod $p$ Moore spectrum has a $v_1$-self map $v_1\co \Sigma^{2p-2}S/p\lra S/p$ and the cofiber, denoted $V(1)$, exists. 

As we observed in the introduction, there is an equivalence $K(\mathbb{F}_q)_p\simeq \tau_{\ge 0} L_{K(1)}S$. There is also an equivalence $\tau_{\ge 0}L_{K(1)}S\simeq j_p$ where $j_p$ is the $p$ completion of the connective image of $J$ spectrum. We use the model $K(\mathbb{F}_q)_p$, which is known to be a commutative ring spectrum and we write $j$ throughout for a cofibrant replacement of this ring spectrum in $\Comm \Sp$. 

Throughout, we will write $\dot{=}$ to mean equivalence up to multiplication by a unit in $\mathbb{F}_p$.  We write $E(x_1,x_2, ...)$ for an exterior algebra over $\mathbb{F}_p$, $P(x_1,x_2,...)$ for a polynomial algebra over $\mathbb{F}_p$, $P_p(x_1,x_2, ...)$ for a truncated polynomial algebra over $\mathbb{F}_p$ truncated at the $p$-th power, and $\Gamma(x_1,x_2,...)$ for a divided power algebra over $\mathbb{F}_p$ on generators $x_i$ for $i\ge 1$. We recall that a divided power algebra generated by $x$ has generators $\gamma_i(x)$ for $i\ge 0$ with relations $\gamma_0(x)=1$, $\gamma_1(x)=x$ and $\gamma_i(x)\gamma_j(x)=(i,j)\gamma_{i+j}(x)$ where 
$(i,j)=\binom{i+j} {j}.$
In particular, over the finite field $\mathbb{F}_p$, there is an isomorphism
$ \Gamma(z)\cong P_{p}(z, \gamma_p(z) ,\gamma_{p^2}(z) , ...). $
When not otherwise indicated, $\otimes$ will represent $\otimes_{\mathbb{F}_p}$ and $HH_*(R)$ will represent $HH_*^{\mathbb{F}_p}(R)$ when $R$ is a (bi)graded $\mathbb{F}_p$-algebra. 
\subsection{Acknowledgements}
It is a pleasure to thank Andrew Salch for many conversations about material related to this paper. The author also benefited from discussions with  Robert Bruner, Bjorn Dundas, Eva H\"oning, and Dan Isaksen. The author would also like to thank an anonymous referee for a careful reading of the paper leading to several improvements.

\section{Topological Hochschild-May spectral sequence} \label{section 2}
The goal of this section is to briefly summarize the necessary ingredients for the topological Hochschild-May  spectral sequence. This section is a review of results from the author's paper with A. Salch \cite{thhmay} and we refer the reader to that paper for details. To describe the spectral sequence, we first need to define what we mean by a filtered object in $\Sp$. We write $(\mathbb{N}^{\op},+,0)$ for the opposite category of the natural numbers $\mathbb{N}$, regarded as a partially ordered set, with symmetric monoidal product given by summing natural numbers. 
\begin{defin}  \label{def:DFO} A \it{decreasingly filtered commutative monoid} \rm in $\Sp$ is a lax symmetric monoidal functor 
\[ I \co (\mathbb{N}^{\op},+,0) \lra (\Sp,\wedge ,S). \] 
\end{defin}  
We will write $I_n$ for a decreasingly filtered commutative monoid in $\Sp$ evaluated on a natural number  $n\in\mathbb{N}^{\op}$. Concretely, a decreasingly filtered commutative monoid in $\Sp$ is a sequence of objects in $\Sp$
\[ \xymatrix{ \ldots \ar[r]^{f_3} & I_2 \ar[r]^{f_2} & I_1 \ar[r]^{f_1} & I_0 } ,\]
along with structure maps 
\[ \rho_{i,j}\co I_i \wedge I_j \lra I_{i+j} \] 
satisfying certain commutativity, associativity, unitality, and compatibility axioms, which are encoded in the structure of a lax symmetric monoidal functor. 
By B. Day \cite[Ex. 3.2.2]{Day}, the category of decreasingly filtered commutative monoids in $\Sp$ is equivalent to $\Comm \Sp^{\mathbb{N}^{\op}}$. In Section 4.1 of \cite{thhmay}, a useful model structure on $\Comm \Sp^{\mathbb{N}^{\op}}$ is discussed. The cofibrant objects in the model structure described in Section 4.1 of \cite{thhmay} will be referred to as {\em cofibrant decreasingly filtered commutative monoids in $\Sp$} and the reader can see \cite{thhmay} for more details since the model structure will not play an important role in the present article.
Naturally, one would like to define the associated graded commutative monoid in $\Sp$ of a cofibrant decreasingly filtered commutative monoid in $\Sp$. The maps $\rho_{i,j}$ are the structure necessary to make sense of such an associated graded commutative monoid in $\Sp$. Additively, we define the \emph{associated graded commutative monoid of $I_{\bullet}$} to be 
\[ E_0^*I := \coprod_{n\in\mathbb{N}} I_n/I_{n+1},\] 
and in \cite[Def. 3.1.6]{thhmay} we provide $E_0^*I$ with the structure of a commutative monoid in $\Sp$, such that the multiplication maps are induced by maps
\[ I_s/I_{s+1}\wedge I_t/I_{t+1} \to I_{s+t}/I_{s+t+1}. \] 

\begin{defin}
Let $R$ be an object in $\Comm \Sp$ and let $X_{\bullet}$ be a simplicial finite set. We define the tensor product 
$ X_{\bullet} \otimes R $ 
to be the realization of the simplicial object 
$X_{\bullet} \tilde{\otimes} R$ in $\Sp$
where the $n$-simplices are 
$ (X_{\bullet} \tilde{\otimes} R)_n=\bigwedge_{s\in X_n} R\{s\}$
with face maps 
$ d_i\co (X_{\bullet} \tilde{\otimes} R)_n\lra (X_{\bullet} \tilde{\otimes} R)_{n-1}$
given on each summand of the coproduct by the map
$ R\{s\}\lra \bigwedge_{t\in X_{n-1}} R\{t\} $
which includes $R\{s\}$ into the summand corresponding to $\delta_i(s)\in X_{n-1}$. Here the maps $\delta_i$ and $\sigma_i$ are the face and degeneracy maps of the simplicial finite set $X_{\bullet}$. Note that the coproduct in commutative ring spectra is the smash product and we are using that fact here. Similarly, the degeneracy map 
$s_i\co (X_{\bullet} \tilde{\otimes} R)_{n-1}\lra (X_{\bullet} \tilde{\otimes} R)_{n}$
is given on each summand by the map 
$ R\{t\} \lra \bigwedge_{s\in X_n} R\{s\}$
where $R\{t\}$ includes as the smash factor corresponding to $\sigma_i(t)\in X_n$. 
\end{defin}
\begin{exm} 
In the case where $X_{\bullet}=\Delta[1]/\delta \Delta[1]=:S^1_{\bullet}$, the minimal simplicial model for the circle,  
\[ THH(R) : = S^1_{\bullet} \otimes R \] 
is the geometric realization of the simplicial object in $\Sp$
\begin{equation}\label{}
S^1_{\bullet} \tilde{\otimes} R := \left \{  \xymatrix{ R  & \ar@<1ex>[l]\ar@<-1ex>[l] 
 R\wedge R &                         
 \ar@<2ex>[l]\ar@<-2ex>[l]  \ar[l]R\wedge R \wedge R& 
    \ar@<3ex>[l]\ar@<-3ex>[l]\ar@<1ex>[l]\ar@<-1ex>[l]  \dots   }  \right \} \end{equation}
    with face and degeneracy maps given by the following formulas: the face maps are  
 \[ 
 d_i=\left \{ \begin{array}{ll} 
 \id_{R} \wedge  \ldots \id_{R}  \wedge \mu \wedge \id_{R} \wedge \ldots \wedge \id_{R}  & \text{ if } i <n \\ 
(\mu \wedge   \id_{R}   \wedge  \ldots   \wedge \id_{R}) \circ t_{n}  &  \text{ if } i=n\end{array} \right .  \] 
where the multiplication map $\mu\co R\wedge R\rightarrow R$ is in the $i$-th position on the first line and 
\[t_n\co R^{\wedge n} \rightarrow R^{\wedge n}\] 
is the map that cyclicly permutes the factors to the right.  The degeneracy maps are 
\[ s_i= \id_R \wedge \ldots \wedge \id_R \wedge \eta \wedge \id_R \wedge \dots \wedge \id_R \] 
where the unit map $\eta\co S\rightarrow R$ from the sphere spectrum is in the $i$-th position. 
\end{exm}
\begin{thm}[Theorem 4.2.1 \cite{thhmay}]
Let $I$ be a cofibrant decreasingly filtered commutative monoid in $\Sp$ and let $X_{\bullet}$ be a simplicial finite set then there is a spectral sequence
\begin{equation} \label{thh may ss} E^1_{s,t} = E_{s,t}(X_{\bullet}\otimes E_0^*(I)))\Rightarrow E_s (X_{\bullet}\otimes I_0) \end{equation} 
with differential 
\[ d^r\co E^r_{s,t}  \lra E^r_{s-1,t+r}  \] 
for any connective generalized homology theory $E_*$, where the second grading on the input of the spectral sequence keeps track of the May filtration. 
The spectral sequence strongly converges when 
$\pi_k(I_i)\cong 0$ for $k<i$  and the differentials satisfy the Leibniz rule. The spectral sequence is also functorial in both variables in the sense that a map of simplicial finite sets $X_{\bullet}\to Y_{\bullet}$ induces a map of spectral sequences 
\[ \xymatrix{
E_{s,t}(X_{\bullet}\otimes E_0^*(I)))\ar[d] \ar@{=>}[r] & E_s (X_{\bullet}\otimes I_0) \ar[d] \\
E_{s,t}(Y_{\bullet}\otimes E_0^*(I))) \ar@{=>}[r] & E_s (Y_{\bullet}\otimes I_0)  
}
\]
and a map $I\to J$ of cofibrant decreasingly filtered commutative monoids in $\Sp$ induces a map of spectral sequences 
\[ \xymatrix{
E_{s,t}(X_{\bullet}\otimes E_0^*(I)))\ar[d] \ar@{=>}[r] & E_s (X_{\bullet}\otimes I_0) \ar[d] \\
E_{s,t}(X_{\bullet}\otimes E_0^*(J))) \ar@{=>}[r] & E_s (X_{\bullet}\otimes J_0). 
}
\]
\end{thm}
In this paper, we will primarily be concerned with the case where $X_{\bullet}=S^1_{\bullet}$, in which case the spectral sequence \eqref{thh may ss} reduces to 
\begin{equation}
 E^1_{s,t} = E_{s,t}(THH(E_0^*(I)))\Rightarrow E_s (THH(I_0)).
 \end{equation}
We retain the full generality of the theorem here though because it will also be useful to consider the case where $X_{\bullet}=*$. We will also use the map of topological Hochschild-May  spectral sequences induced by $*\to S^1_{\bullet}$.
\begin{rem} \label{thh-may  ss with coeff} 
Given a cofibrant decreasingly filtered commutative monoid $I$ and a cofibrant $I$-module $M$ in the category $\Sp^{\mathbb{N}^{\op}}$, we can also construct a topological Hochschild-May  spectral sequence with coefficients and prove an analogous  fundamental theorem of the May filtration with coefficients. This produces a spectral sequence 
\begin{equation} \label{thh may ss coeff} E^1_{*,*} = E_*(THH(E_0^*(I),E_0^*(M) ))\Rightarrow E_* (THH(I_0, M_0)) \end{equation} 
with the same differential convention. The spectral sequence strongly converges when $\pi_k(I_i)\cong 0$ for $k<i$ and $\pi_k(M_i)\cong 0$ for $k<i$. Notably, the spectral sequence is not multiplicative unless $M$ is also a decreasingly filtered commutative ring spectrum and it is an algebra over the decreasingly filtered commutative monoid $I$. See \cite[Thm. A.3.3]{thhmay} for details. 
\end{rem} 
\subsection{Decreasingly filtered commutative monoids in spectra}
In order to compute with the topological Hochschild-May  spectral sequence, it is necessary to have examples of cofibrant decreasingly filtered commutative monoids in $\Sp$. In \cite[Thm. 4.2.1]{thhmay}, the author and A. Salch constructed a large class of examples by constructing a specific model for the Whitehead tower associated to cofibrant connective commutative ring spectrum. This model for the Whitehead tower of $j$ is used to compute mod $(p,v_1)$ homotopy of $THH(j)$ in the subsequent section, so we recall the statement of the theorem. 
\begin{thm}[Theorem 4.2.1 \cite{thhmay}] \label{filt} Let $R$ be a cofibrant connective commutative monoid in $\Sp$, then there is an associated cofibrant decreasingly filtered  commutative monoid in $\Sp$, 
\[ \xymatrix{ \dots \ar[r] &  \tau_{\ge 2}R  \ar[r] & \tau_{\ge 1}R  \ar[r]&  \tau_{\ge 0}R    }\] 
where 
$\pi_k(\tau_{\ge n}R)\cong \pi_k(R)$ for $k\ge n$ and $\pi_k(\tau_{\ge n}R)\cong 0$ for $k<n$, equipped with structure maps 
\[ \rho_{i,j}\co\tau_{\ge i}R \wedge \tau_{\ge j}R  \rightarrow \tau_{\ge i+j}R \]
satisfying commutativity, associativity, unitality, and compatibility encoded in the structure of a cofibrant object $\tau_{\ge \bullet}R$ in $\Comm \Sp^{\mathbb{N}^{\op}}$.
\end{thm}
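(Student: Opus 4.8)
The plan is to realize the asserted tower as the underlying object of a cofibrant commutative monoid in $\mathfrak{S}^{\mathbb{N}^{\op}}$ for the Day convolution product, equipped with the model structure created by the forgetful functor as recalled above; given such an $I$ one sets $\tau_{\ge n}R:=I_n$, takes $\rho_{i,j}$ to be the $(i,j)$-component of the Day-convolution multiplication $I\wedge I\to I$, and then commutativity, associativity, unitality, and compatibility with the structure maps $f_i$ hold automatically, being precisely the commutative-monoid axioms together with naturality of the multiplication over $\mathbb{N}^{\op}$. Two structural features of this target are what make it work. First, the monoidal unit of $\mathfrak{S}^{\mathbb{N}^{\op}}$ is concentrated in filtration degree $0$ --- the level at which the monoid will recover the unital ring $R$ --- so there is no obstruction to the higher terms $\tau_{\ge n}R$ being non-unital. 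Second, the Day convolution sends filtration level $i$ smashed with level $j$ into level $i+j$, and the smash product of an $i$-connective spectrum with a $j$-connective spectrum is $(i+j)$-connective, so the full subcategory of diagrams $I$ with $I_n$ $n$-connective for every $n$ is closed under $\wedge$. Hence it suffices to produce a single cofibrant commutative monoid $I$ in $\mathfrak{S}^{\mathbb{N}^{\op}}$ with $I_0\simeq R$, with each $I_n$ being $n$-connective, and with $I_n\to I_{n-1}$ inducing an isomorphism on $\pi_k$ for every $k\ge n$.

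I would build $I$ by a transfinite cell-attachment (obstruction-theoretic) argument inside $\Comm\mathfrak{S}^{\mathbb{N}^{\op}}$, a filtered analogue of the standard cellular construction of a connective commutative ring spectrum. Write $F_n\colon\mathfrak{S}\to\mathfrak{S}^{\mathbb{N}^{\op}}$ for the left adjoint of evaluation at $n$; since $\mathbb{N}^{\op}(n,m)$ is nonempty exactly when $m\le n$, the diagram $F_n(X)$ is $X$ in filtration degrees $0,\dots,n$ and is trivial above, and one has $F_n(X)\wedge F_m(Y)\cong F_{n+m}(X\wedge Y)$, so free cells interact with the multiplication respecting the additive grading of filtrations. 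Starting from a cofibrant replacement of the monoidal unit, one adjoins free-commutative-monoid cells on objects $F_d(S^d)$ realizing a set of homotopy-ring generators of $\pi_\ast R$, each placed in the filtration degree equal to its topological degree (so that a generator in degree $d$ survives at levels $0,\dots,d$ but not at level $d+1$, matching $\pi_d\tau_{\ge m}R$), and then adjoins relation cells $F_m(D^{k+1})$, level by level, to kill the surplus homotopy of the resulting free commutative monoid so that the value at each level $m$ has homotopy exactly $\pi_{\ge m}R$; the transfinite colimit of this process is the desired $I$. That the result is a \emph{cofibrant} decreasingly filtered commutative monoid --- each $I_n$ cofibrant in $\mathfrak{S}$ and each $f_n$ a cofibration --- follows from the Schwede--Shipley theorem recalled above, since the forgetful functor sends a cofibration with cofibrant source in $\Comm\mathfrak{S}^{\mathbb{N}^{\op}}$ to a projective cofibration in $\mathfrak{S}^{\mathbb{N}^{\op}}$, and a projective-cofibrant diagram over $\mathbb{N}^{\op}$ has all objects cofibrant and all structure maps cofibrations.

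The main obstacle is the bookkeeping in the inductive step: a cell placed ``at level $n$'' is present in every filtration degree $\le n$, and passing to free commutative monoids creates still further cells through the products, so one must check that the cells attached to repair the homotopy at level $n$ do not disturb the homotopy groups already arranged at the lower levels $I_{n-1},\dots,I_0$. This is a connectivity estimate --- arranging that at the stage handling level $n$ one only attaches cells in dimensions at least $n$ --- which guarantees that the construction converges and that the homotopy groups stabilize to $\pi_k(\tau_{\ge n}R)\cong\pi_k(R)$ for $k\ge n$ and $0$ for $k<n$. An alternative, more conceptual route is to take the constant commutative monoid on $R$ in $\mathfrak{S}^{\mathbb{N}^{\op}}$ and apply a coreflection into the connectivity-compatible subcategory --- a ``connective cover in the filtration direction'' --- which is automatically lax symmetric monoidal because that subcategory is closed under $\wedge$; the remaining work is then to rectify the resulting lax monoid to a strict commutative monoid in the model category, which is exactly what the Day convolution together with Isaacson's verification of the pushout-product axiom \cite{MR2713397} and the Schwede--Shipley rectification theorem \cite{SS} are set up to accomplish. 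In either approach, once $I$ has been constructed the stated properties of the tower are immediate from the symmetric monoidal formalism.
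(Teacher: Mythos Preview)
The paper itself defers the proof to \cite{thhmay}, but the argument there (a draft of which appears commented out in the source just below the theorem) proceeds quite differently from yours: it is a top-down construction that begins with $\tau_{\ge 0}R:=R$ and inductively defines $\tau_{\ge n}R$ as the fiber of $\tau_{\ge n-1}R\to\Sigma^{n-1}H\pi_{n-1}R$. The multiplication maps $\rho_{i,j}$ are produced from the universal property of the pullback, using that each stage of the Postnikov tower of $R$ is a square-zero extension so that the relevant composites $\tau_{\ge i}R\wedge\tau_{\ge j}R\to\tau_{\ge n-1}R\to\Sigma^{n-1}H\pi_{n-1}R$ are null when $i+j\ge n$; associativity is then checked by a diagram chase exploiting that each $I_n\to I_{n-1}$, being the pullback of a monomorphism, is itself a monomorphism.

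Your bottom-up cell-attachment proposal is a genuinely different idea, but as written it has a gap at the most basic point. You correctly state that one needs $I_0\simeq R$ as commutative ring spectra, yet the construction you sketch only arranges the isomorphism of homotopy \emph{rings} $\pi_\ast I_0\cong\pi_\ast R$: you attach generator cells $F_d(S^d)$ indexed by ring generators of $\pi_\ast R$ and then kill surplus homotopy. Two connective commutative ring spectra with isomorphic homotopy rings need not be equivalent; all Postnikov $k$-invariants and higher $E_\infty$ coherences are invisible to $\pi_\ast$. The fiber approach avoids this entirely by starting from $R$ itself, whereas your construction never refers to $R$ except through $\pi_\ast R$, so it cannot recover $R$. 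One could attempt a repair by carrying along a map $I\to\mathrm{const}(R)$ in $\Comm\mathfrak{S}^{\mathbb{N}^{\op}}$ throughout the induction and attaching cells along classes lifted from $R$, but that is a substantially more delicate argument than the one you outline, and it interacts nontrivially with the connectivity bookkeeping you already flag as the main obstacle.

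Your alternative coreflection route is closer to something workable, but note that ``closed under $\wedge$'' gives lax monoidality of the coreflection only once the coreflection is already in hand; at the point-set level there is no such functor, and the passage from an $\infty$-categorical connective cover to a strict cofibrant object of $\Comm\mathfrak{S}^{\mathbb{N}^{\op}}$ is a rectification statement, not the Schwede--Shipley forgetful-functor result \cite[Thm.~4.1]{SS} you cite.
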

\begin{rem} It is well known that a model for the Whitehead tower as an object in $\Sp^{\mathbb{N}^{\op}}$ exists. The main thrust of the proof in \cite[Thm. 4.2.1]{thhmay} is that $\tau_{\ge \bullet}R$ can be built with multiplicative structure; i.e. $\tau_{\ge \bullet}R$ can be constructed as a cofibrant object in $\Comm \Sp^{\mathbb{N}^{\op}}$. 
\end{rem}

\begin{exm} \label{exm J}Fix a prime $p\ge5$ and let $q$ be a prime power that topologically generates $\mathbb{Z}_p^{\times}$. Recall that $j$ is a cofibrant replacement in $\Comm \Sp$ for the connective commutative ring spectrum $K(\mathbb{F}_q)_p$. Theorem \ref{filt} produces a decreasingly filtered commutative monoid $\tau_{\ge \bullet} j$ in $\Sp$. We will write $\mathbb{J}$ in place of $\tau_{\ge \bullet} j$ for the sake of brevity. 
The associated graded $E_0^*\mathbb{J}$ is additively  equivalent to 
\[ H\pi_0j\vee\Sigma^{2p-3}H\pi_{2p-3}j\vee \Sigma^{4p-5}H\pi_{4p-5}j\vee ... \] 
or more succinctly $H\pi_*(j)$. As a commutative ring spectrum, $E^0_*\mathbb{J}$ is given by taking iterated square-zero extensions.  The homotopy groups $\pi_*(E_0^*\mathbb{J})$ are isomorphic to  $\pi_*(j)$ as graded rings, but $E_0^*\mathbb{J}$ is a generalized Eilenberg-Maclane spectrum. In other words, we have killed off all the Postnikov $k$-invariants. After smashing with $S/p$, there is an equivalence
\[ S/p \wedge E_0^*\mathbb{J}\simeq H\mathbb{F}_p \vee \bigvee_{i\ge 1} \Sigma^{(2p-2)i-1}H\mathbb{F}_p\vee \Sigma^{(2p-2)i}H\mathbb{F}_p \]  
and $S/p \wedge E_0^*\mathbb{J}$ is naturally a $S/p\wedge H\pi_0j$ algebra, or in other words an $H\mathbb{F}_p$-algebra. 

We compute the topological Hochschild-May spectral sequence \eqref{thh may ss} where $X_{\bullet}$ is a point 
\begin{align}\label{trivial topological Hochschild-May}
\pi_*(S/p \wedge E_0^*\mathbb{J}) \Rightarrow \pi_*(S/p\wedge  j). 
\end{align}
The spectral sequence collapses for bidegree reasons as is visible from the pattern indicated by Figure \ref{sseq:example figure}. There is therefore an additive isomorphism 
\[ \pi_*(S/p \wedge E_0^*\mathbb{J})\cong P(v_1)\otimes E(\alpha_1) \] 
where in the $E^{\infty}$-page the bidegrees are 
\begin{align*}
| \alpha_1v_1^{k-1}|=&((2p-2)k-1,(2p-2)k-1) \text{ and } \\
|v_1^k|=&((2p-2)k,(2p-2)k-1)
\end{align*}
 where the first coordinate is the topological degree and the second coordinate is the May filtration. 
\begin{figure} \label{sseq:example figure} 
\begin{center}
\tiny
\[ 
\begin{sseq}[entrysize=.5cm,ylabelstep=2,ylabels={0;;2p-3;;4p-5;;6p-7;;8p-9;;10p-11},xlabelstep=3,xlabels={0;;;2p-3;;;4p-5;;;6p-7;;;8p-9;;;10p-11;;;}]{18}{12}
\ssdrop{1}
\ssmove{3}{2}
\ssdrop{\alpha_1}
\ssmove{1}{0}
\ssdrop{v_1}
\ssmove{2}{2}
\ssdrop{\alpha_1v_1}
\ssmove{1}{0}
\ssdrop{v_1^2}
\ssmove{2}{2}
\ssdrop{\alpha_1v_1^2}
\ssmove{1}{0}
\ssdrop{v_1^3}
\ssmove{2}{2}
\ssdrop{\alpha_1v_1^3}
\ssmove{1}{0}
\ssdrop{v_1^4}
\ssmove{2}{2}
\ssdrop{\alpha_1v_1^4}
\ssmove{1}{0}
\ssdrop{v_1^5}
\ssmove{1}{1}
\ssdrop{\dots}
\end{sseq}
\]
\end{center}
\caption{The $E^{\infty}_{s,t}$-page of the $S/p$ topological Hochschild-May spectral sequence for $p\ge 3$ for $s\le 4p^2$ and all $t$. Here we are simply tensoring with a point and our decreasingly filtered commutative ring spectrum is $\mathbb{J}$. 
}\label{sseq:example figure} 
\end{figure}

The ring structure on $\pi_*(S/p \wedge E_0^*\mathbb{J})$, however, is not the same as $\pi_*(S/p\wedge j)$ and instead each of the elements $\alpha_1v_1^k$ and $v_1^k$ are indecomposable for $k\ge 0$ and all the products 
$\alpha_1 v_1^n \cdot v_1^m $, $\alpha_1 v_1^n \cdot \alpha_1 v_1^k$ and $v_1^\ell \cdot v_1^m$ 
are trivial when $m>0$ and $\ell>0$. This is clear because there is an isomorphism of graded rings $\pi_*(E_0^*\mathbb{J})\cong \pi_*j$ and the products of elements $\alpha_j\cdot \alpha_k$, where  $\alpha_j\in \pi_{(2p-2)j-1}j$, are trivial for degree reasons and the product on $\pi_*(S/p\wedge E_0^*\mathbb{J})$ is induced by the product on $E_0^*\mathbb{J}$. It is also clear because $\pi_*(E_0^*\mathbb{J})$ is a bigraded ring and the products indicated must all be trivial for bidegree reasons. 

Therefore, there must be hidden multiplicative extensions in the spectral sequence \eqref{trivial topological Hochschild-May}. 
In particular, the existence of the spectral sequence implies that there is a filtration of the graded ring 
\[ \pi_*(S/p\wedge j)=P(v_1)\otimes E(\alpha_1),\] 
given by the May filtration, whose associated graded is the $E^1\cong E^{\infty}$-page of the spectral sequence. 

We explicitly describe the multiplicative filtration of the graded ring $P(v_1)\otimes E(\alpha_1)$ whose associated graded is the $E^{\infty}$-page of the spectral sequence.  It can be written explicitly as 
\begin{align}\label{filtration by powers of ideal}
\dots  \subset (\alpha_1 ,v_1)^3 \subset  (\alpha_1,v_1)^2  \subset  \dots   \subset  (\alpha_1,v_1)^2    \subset (\alpha_1, v_1)   \subset   \dots   \subset   (\alpha_1, v_1)   \subset  P(v_1)\otimes E(\alpha_1)
\end{align}
where the redundant copies of $(\alpha_1,v_1)^k$ are simply included so that the filtration matches the one coming from the Whitehead filtration on $j$. More precisely, writing $F_\bullet(P(v_1)\otimes E(\alpha_1))$ for this filtration, then there are equalities
\begin{align}\
\label{Fi def} F_0(P(v_1)\otimes E(\alpha_1))=&P(v_1)\otimes E(\alpha_1)=\pi_*(S/p \wedge E_0^*\mathbb{J}), \text{ and  }\\  
\nonumber F_{s}(P(v_1)\otimes E(\alpha_1))=&(\alpha_1,v_1)^{k(s)}  \text{ when }(2p-2)(k(s)-1)\le s \le(2p-2)k(s)-1
\end{align}
when $s\ge 1$. This filtration is clearly multiplicative with multiplication maps 
\[ F_s(P(v_1)\otimes E(\alpha_1)) \otimes F_t(P(v_1)\otimes E(\alpha_1)) \to F_{s+t}(P(v_1)\otimes E(\alpha_1)) \]
given by the composite maps 
\begin{align}\label{mult filt}
 (\alpha_1 ,v_1)^{j(s)} \otimes (\alpha_1,v_1)^{k(t)} \to (\alpha_1, v_1)^{j(s)+k(s)} \hookrightarrow  (\alpha_1, v_1)^{\ell(s+t)}
 \end{align}
where either 
\begin{align*}
	1 \le k(t) \le  j(s) \le \ell(s+t)\le j(s)+k(t) &\text{ or } \\
	1 \le j(s) <  k(t)\le \ell(s+t) \le j(s)+k(t)&
\end{align*} 
and the map 
\[ (\alpha_1 ,v_1)^n \otimes (\alpha_1 ,v_1)^m \to  (\alpha_1 ,v_1)^{n+m}\]
is the standard multiplication map for all $n,m\ge 1$. The multiplication maps when $j(s)=0$ or $k(s)=0$ are simply the right and left $P(v_1)\otimes E(\alpha_1)$ module structure maps for $P(v_1)\otimes E(\alpha_1)$-bimodule $(\alpha_1 ,v_1)^{k(s)} $. These multiplication maps satisfy the necessary commuting diagrams by construction of the multiplicative topological Hochschild-May spectral sequence. The associated graded of this multiplicative filtration is 
\[ \bigoplus_{i\ge 0} F_i(P(v_1)\otimes E(\alpha_1))/F_{i+1}(P(v_1)\otimes E(\alpha_1)) =S/p_*E_0^*\mathbb{J}.\]
To see this multiplicatively, note that $F_s(P(v_1)\otimes E(\alpha_1))/F_{s+1}(P(v_1)\otimes E(\alpha_1))$ is only nontrivial for $s\ge 1$ when $s=(2p-2)k-1$ for some $k$. Consequently, all products of elements of the form $\alpha v_1^j$, $v_1^k$  for $j\ge 0$ and $k\ge 1$ are trivial. 

From Figure \ref{sseq:example figure} it is also clear that all products of $\alpha_1v_1^j$ and $v_1^k$ for $j\ge 0$ and $k\ge 1$ in $S/p_*(E_0^*\mathbb{J})$ must be trivial for bidegree reasons and yet there is room for multiplicative extensions. Each of these multiplicative extensions must occur by strong convergence of the multiplicative topological Hochschild-May spectral sequence and the known multiplication on $S/p_*j\cong P(v_1)\otimes E(\alpha_1)$. More precisesly, there are multiplicative extensions 
\begin{align*} 
\alpha_1v_1^k \cdot v_1^{m}= & \alpha_1v_1^{k+m} \\
v_1^k \cdot v_1^{m}= & v_1^{k+m} \\
\end{align*}
and for bidegree reasons there is no room for further multiplicative extensions.
We use this fact in Lemma \ref{lem: homology of ag},  Remark \ref{HM remark},  and Proposition \ref{prop d_1}. 
\end{exm}

\section{Topological Hochschild homology of $j$ mod $(p,v_1)$}\label{section 3}
In Section \ref{section 2}, we reviewed the construction that takes a decreasingly filtered commutative monoid $I$ in $\Sp$ as input and produces a May-type spectral sequence
\[ E^1_{s,t}=E_{s,t}(THH(E_0^*I))\Rightarrow E_{s}(THH(I_0)) \] 
for any connective generalized homology theory $E$, which we we call the $E$ topological Hochschild-May  spectral sequence. Here, the second grading keeps track of the May filtration. Also, in Section \ref{section 2} we produced a Whitehead-type decreasingly filtered commutative monoid in $\Sp$, denoted $\mathbb{J}$, associated to a cofibrant commutative ring spectrum  model for $p$-complete connective image of J, which we denote $j$. We therefore have a spectral sequence 
\begin{equation} \label{Ethhmayj} E^1_{s,t}=E_{s,t}(THH(E_0^*\mathbb{J}))\Rightarrow E_{s}(THH(j)). \end{equation}
The purpose of this section is to compute this spectral sequence in the case $E=V(1)$. 
\subsection{ Computing the $H\mathbb{F}_p$ topological Hochschild-May   spectral sequence} 
In the case $E=H\mathbb{F}_p$, the input of the spectral sequence is calculable, and the output is already known due to Angeltveit-Rognes \cite{MR2171809}. The $H\mathbb{F}_p$ topological Hochschild-May  spectral sequence computation, therefore, will allow us to determine differentials in the $V(1)$ topological Hochschild-May  spectral sequence that are also detected in the $H\mathbb{F}_p$ topological Hochschild-May  spectral sequence. To begin, let us recall the computation of Angeltveit-Rognes. 
\begin{thm}[Angeltveit-Rognes {\cite[Prop. 7.12, Thm. 7.15]{MR2171809}}] \label{HFpj}
There is an isomorphism 
\[ {H\mathbb{F}_p}_*(j)\cong P(\tilde{\xi}_1^p,\tilde{\xi}_2,\bar{\xi}_3,...)\otimes E(\tilde{\tau}_2,\bar{\tau}_3, ...)\otimes E(b) \cong (\mathcal{A}//A(1))_*\otimes E(b)\] 
where all the elements in $(\mathcal{A}//A(1))_*$ besides $\tilde{\tau}_2$, $\tilde{\xi}_1^p$, and $\tilde{\xi}_2$, and $b$ have the usual $\mathcal{A}_*$-coaction and the coaction on the remaining elements $\tilde{\tau}_2$, $\tilde{\xi}_1^p$, $\tilde{\xi}_2$, and $b$ is given as\\ 
\begin{align*}
\psi(b)=& 1\otimes b \\
\psi(\tilde{\xi}_1^p)= & 1\otimes \tilde{\xi}_1^p -\tau_0\otimes b + \bar{\xi}_1^p\otimes 1 \\
\psi(\tilde{\xi}_2)= &1\otimes \tilde{\xi}_2+\bar{\xi}_1\otimes \tilde{\xi}_1^p +\tau_1\otimes b +  \bar{\xi}_2\otimes 1 \\
\psi(\tilde{\tau}_2)= & 1\otimes \tilde{\tau}_2 + \bar{\tau}_0\otimes \tilde{\xi}_2 +\bar{\tau}_1\otimes \tilde{\xi}_1^p  - \tau_1 \tau_0\otimes b  + \bar{\tau}_2\otimes 1  .
\end{align*}

There is also an isomorphism
\[ {H\mathbb{F}_p}_*(THH(j))\cong {H\mathbb{F}_p}_*(j)\otimes E(\sigma \tilde{\xi}_1^p,\sigma \tilde{\xi}_2)\otimes P(\sigma \tilde{\tau}_2)\otimes \Gamma(\sigma b) \]
of $\mathcal{A}_*$-comodules and ${H\mathbb{F}_p}_*(j)$-algebras. The $\mathcal{A}_*$-coaction is given by the formula
\begin{equation}\label{coaction sigma} \psi(\sigma x)=(1\otimes \sigma)\circ \psi(x) \end{equation}
and the previously stated coactions. 
\end{thm}

Note that Angeltveit-Rognes use a tilde over a symbol, for example $\tilde{x}$ to signify that the element has a different coaction then the coaction on $x$ or $\bar{x}$. We now want to compute the input of the spectral sequence. First, we note that as described in Example \ref{exm J}, $S/p\wedge E_0^*\mathbb{J}$ is an $H\mathbb{F}_p$ algebra and hence $V(1)\wedge E_0^*\mathbb{J}$ is also an $H\mathbb{F}_p$ algebra. It is known more generally that $THH(R)$ is an $R$ algebra when $R$ is a commutative ring spectrum, so $V(1)\wedge THH(E_0^*\mathbb{J})$ is a $V(1)\wedge E_0^*\mathbb{J}$-algebra and in particular an $H\mathbb{F}_p$-algebra. We can therefore apply the following lemma, which is well known and can be found in Ausoni--Rognes \cite[Lem. 4.1]{MR2928844}. 
\begin{lem} \label{prim} Let $M$ be an $H\mathbb{F}_p$-algebra. Then $M$ is equivalent to a wedge of suspensions of $H\mathbb{F}_p$, and the Hurewicz map 
$ \pi_*(M)\to  {H\mathbb{F}_p}_*(M) $ 
induces an isomorphism between $\pi_*(M)$ and the subalgebra of $\mathcal{A}_*$-comodule primitives contained in ${H\mathbb{F}_p}_*(M)$. 
\end{lem}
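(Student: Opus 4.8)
The plan is to reduce the statement to two standard facts: that $H\mathbb{F}_p$ is a ``field spectrum'' whose modules split, and that the dual Steenrod algebra $\mathcal{A}_*$ has no comodule primitives over itself outside degree zero. First I would use that $\pi_*H\mathbb{F}_p=\mathbb{F}_p$ is a graded field, so $\pi_*M$ is a free graded $\mathbb{F}_p$-module; choosing a homogeneous basis $\{x_i\}$ with $x_i\in\pi_{n_i}M$ (where $n_i\in\mathbb{Z}$), each $x_i$ is represented by a map $S^{n_i}\to M$ of underlying spectra, which extends along the unit $S^{n_i}\to\Sigma^{n_i}H\mathbb{F}_p$ to an $H\mathbb{F}_p$-module map $\Sigma^{n_i}H\mathbb{F}_p\to M$, since $\Sigma^{n_i}H\mathbb{F}_p$ is the free $H\mathbb{F}_p$-module on $S^{n_i}$. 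Wedging these gives an $H\mathbb{F}_p$-module map $\bigvee_i\Sigma^{n_i}H\mathbb{F}_p\to M$ which is an isomorphism on homotopy groups by construction (homotopy commutes with wedges of spectra), hence a weak equivalence; this proves the first assertion.

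For the second assertion I would apply the functor ${H\mathbb{F}_p}_*(-)$, valued in $\mathcal{A}_*$-comodules, to this equivalence. Since ${H\mathbb{F}_p}_*(\Sigma^{n_i}H\mathbb{F}_p)=\Sigma^{n_i}\mathcal{A}_*$ with coaction the (suspended) coproduct $\psi$ of $\mathcal{A}_*$, naturality yields an isomorphism of $\mathcal{A}_*$-comodules ${H\mathbb{F}_p}_*(M)\cong\bigoplus_i\Sigma^{n_i}\mathcal{A}_*$. The primitive subcomodule is then $\bigoplus_i\Sigma^{n_i}P$ with $P=\{x\in\mathcal{A}_*:\psi(x)=1\otimes x\}$; applying the right counit $\mathrm{id}\otimes\epsilon$ to $\psi(x)=1\otimes x$ and using counitality of $\psi$ forces $x=\epsilon(x)\cdot 1$, so $P=\mathbb{F}_p\{1\}$ is concentrated in degree $0$. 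Finally, the Hurewicz map is natural, and under the splitting it restricts on each wedge summand to the unit inclusion $\Sigma^{n_i}\mathbb{F}_p\to\Sigma^{n_i}\mathcal{A}_*$, $x_i\mapsto 1$; hence it is injective and carries $\pi_*M$ isomorphically onto $\bigoplus_i\Sigma^{n_i}\mathbb{F}_p\{1\}$, which is precisely the subspace of $\mathcal{A}_*$-comodule primitives in ${H\mathbb{F}_p}_*(M)$. When $M$ is moreover an $H\mathbb{F}_p$-algebra, the Hurewicz map is multiplicative and the primitives form a subalgebra (as $\psi$ is an algebra map), so the isomorphism is one of graded rings, as stated.

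I do not expect a serious obstacle here: the essential inputs are the module-splitting of $H\mathbb{F}_p$ and the one-line computation that $\mathcal{A}_*$ has no positive-degree comodule primitives over itself. The point that deserves the most care — though it remains formal — is the claim in the second paragraph that the $\mathcal{A}_*$-comodule structure on ${H\mathbb{F}_p}_*(M)$ genuinely respects the wedge decomposition; this is automatic because ${H\mathbb{F}_p}_*(-)$ together with its coaction is a functor into comodules and the splitting is realized by a map of $H\mathbb{F}_p$-modules, so the induced isomorphism is already an isomorphism of comodules.
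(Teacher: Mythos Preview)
Your argument is correct. The first paragraph is a clean proof of the splitting (the paper simply cites the field-spectrum property from Hopkins--Smith), and your second paragraph establishes the primitive computation directly and carefully, including the algebra case.

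The route differs from the paper's. Rather than computing primitives by hand, the paper invokes the Adams spectral sequence
\[
\Ext_{\mathcal{A}_*}^{s,t}(\mathbb{F}_p,\,{H\mathbb{F}_p}_*(M)) \Longrightarrow \pi_{t-s}(M),
\]
observes that it collapses to the $s=0$ line (implicitly because ${H\mathbb{F}_p}_*(M)\cong\bigoplus_i\Sigma^{n_i}\mathcal{A}_*$ is an extended, hence $\Ext$-acyclic, comodule), and then identifies $\Ext^0=\mathrm{Hom}_{\mathcal{A}_*}(\mathbb{F}_p,-)$ with the primitives. Your approach is more elementary and self-contained: you never invoke the Adams spectral sequence, and your counit trick $x=\epsilon(x)\cdot 1$ computes $\mathrm{Hom}_{\mathcal{A}_*}(\mathbb{F}_p,\mathcal{A}_*)=\mathbb{F}_p$ directly. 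The paper's version is terser if one is happy to take the Adams machinery as a black box; yours makes every step explicit and also handles the ``subalgebra'' wording more carefully than the paper does.
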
 

Therefore, computing the subalgebra of comodule primitives in ${H\mathbb{F}_p}_*(V(1)\wedge THH(E_0^*\mathbb{J}))$ will suffice for computing the input of the $V(1)$ topological Hochschild-May  spectral sequence. 
\begin{lem}\label{lem: homology of ag}
There is an isomorphism of graded $\mathbb{F}_p$-algebras
\begin{equation}\label{Fp im j input} \pi_*(H\mathbb{F}_p\wedge E_0^*\mathbb{J}) \cong (A//E(0))_*\otimes S/p_*E_0^*\mathbb{J} \end{equation}
and the $H\mathbb{F}_p$ topological Hochschild-May  spectral sequence for $X_{\bullet}=*$, 
\[ \pi_*(H\mathbb{F}_p\wedge E_0^*\mathbb{J}) \Rightarrow \pi_*(H\mathbb{F}_p\wedge j ) \]
has differentials 
\begin{align*}
d_{2p-3}(\tau_1) \dot{=} &v_1, \\
d_{2p-3}(\bar{\xi}_1)\dot{=} & \alpha_1,\\
d_{2p-2}(\bar{\tau}_1v_1^k) \dot{=}  &v_1^{k+1}, \text{ and }\\
d_{2p-2}(\bar{\xi}_1v_1^k)\dot{=}& \alpha_1v_1^k
\end{align*}
for $k\ge 1$, as well as those differentials generated by the Leibniz rule, and there is an isomorphism of graded $\mathbb{F}_p$ algebras
\[ E_{*,*}^{\infty} = P(\bar{\xi}_1^p,\bar{\xi}_2,\dots )\otimes E(\bar{\tau}_2,\bar{\tau}_3, \dots )\otimes E(\alpha_1\bar{\xi}_1^{p-1})\cong {H\mathbb{F}_p}_*(j).\]
\end{lem}
\begin{proof} 
 Since $H\mathbb{Z}\wedge S/p \simeq H\mathbb{F}_p$, there is an equivalence
$ H\mathbb{F}_p\wedge E_0^*\mathbb{J} \simeq H\mathbb{Z} \wedge S/p \wedge E_0^*\mathbb{J}$
and since $S/p\wedge E_0^*\mathbb{J}$ is an $H\mathbb{F}_p$-algebra, 
there is an equivalence 
\[ H\mathbb{Z} \wedge S/p \wedge E_0^*\mathbb{J} \simeq (H\mathbb{Z} \wedge H\mathbb{F}_p )\wedge_{H\mathbb{F}_p} S/p\wedge E_0^*\mathbb{J} .\] 
By the K\"unneth isomorphism we produce the desired isomorphism \eqref{Fp im j input}. 

We then examine the topological Hochschild-May  spectral sequence  for $X_{\bullet}=*$, for which the abutment is known by Theorem \ref{HFpj}. In topological degrees $0< m <2p^2-2p-1$, we know the abutment $({H\mathbb{F}_p})_m(j)$ is trivial. This forces the differentials $d_{2p-3}(\bar{\xi}_1)=\alpha_1$ and $d_{2p-3}(\bar{\tau}_1)=v_1$ since there is no other way for these classes to be killed by differentials. After invoking the Leibniz rule, we compute
\[ E^{2p-2}_{*,*}=P(\bar{\xi}_1^p,\bar{\xi}_2,\ldots )\otimes E(\bar{\tau}_2,\bar{\tau}_3,\ldots )\otimes \mathbb{F}_p\{1,\bar{\tau}_1v_1^k,v_1^{k+1},\bar{\xi}_1v_1^k,\alpha_1v_1^k | k\ge 1\}\]
where all products of elements in the set $\{\bar{\tau}_1v_1^k,v_1^{k+1},\xi_1v_1^k,\alpha_1v_1^k | k\ge 1\}$ are trivial. These elements occur in pairs $\{\bar{\tau}_1v_1^k,v_1^{k+1}\}$ in bidegrees $((2p-2)(k+1)+1,(2p-2)k-1)$ and $((2p-2)k,(2p-2p)(k+1)-1)$ respectively and pairs $\{\bar{\xi}_1v_1^k,\alpha_1v_1^k\}$ in bidegrees $((2p-2)(k+1),(2p-2)k-1)$ and $((2p-2)(k+1)-1,(2p-2p)(k+1)-1)$ respectively. When we compare to the abutment, we see that in degree $m=(2p-2)k-1$ for $k\ge 1$ the dimension of the $\mathbb{F}_p$-vector space of the $E_{2p-2}$-page contributing to this degree is one more than the abutment. Similarly, in degree $m=(2p-2)k$ the dimension of the $\mathbb{F}_p$-vector space of the $E^{2p-2}$-page contributing to degree $(2p-2)k$ for $k\ge 1$ is two more than the abutment. Finally, in degree $m=(2p-2)k+1$ for $k\ge 1$ the dimension of the $\mathbb{F}_p$-vector space of the $E^{2p-2}$-page contributing to degree $(2p-2)k+1$ for $k\ge 1$ is one more than the abutment. We observe that the differential pattern stated is the only one that could possibly lead to the known abutment. 
Therefore, there must be differentials $d_{2p-2}(\bar{\tau}_1v_1^k)=v_1^{k+1}$ and $d_{2p-2}(\bar{\xi}_1v_1^k)=\alpha_1v_1^{k}$ for $k\ge 1$. 

Finally, we note that the $E^{\infty}$-page is concentrated on two lines with each element in $P(\bar{\xi}_1^p,\bar{\xi}_2,\dots )\otimes E(\bar{\tau}_2,\bar{\tau}_3, \dots)$ in May filtration zero and all elements divisible by $\alpha_1\bar{\xi}_1^{p-1}$ in May filtration $2p-3$. A hidden multiplicative extension would contradict the fact that the spectral sequence is multiplicative and known to strongly converge to $(H\mathbb{F}_p)_*(j)$. Consequently, there is no room for multiplicative extensions. 
\end{proof}
\subsubsection{The algebraic Hochschild-May spectral sequence}
A large number of differentials in the $H\mathbb{F}_p$ topological Hochschild-May  spectral sequence can be determined by algebraic means using the algebraic Hochschild-May spectral sequence \cite[Prop. 2.1]{150102547S}. We briefly recall the setup of the algebraic Hochschild-May spectral sequence in the specific case of interest. 

As noted in Example \ref{exm J}, there is a filtration of the graded ring $S/p_*(j)\cong P(v_1)\otimes E(\alpha_1)$ given by the May filtration whose associated graded is $S/p_*(E_0^*\mathbb{J})$.
Specifically, if we write $\mfilt(x)$ for the May filtration of an element $x\in P(v_1)\otimes E(\alpha_1)$, then $\mfilt(\alpha_1v_1^{k-1})=\mfilt(v_1^{k})=(2p-2)k-1$ for $k\ge 1$ and $\mfilt(1)=0$. By adjusting the grading by a linear transformation, the filtration of $S/p_*(j)$ makes more sense in the algebraic setting. As it stands, the grading is given by $|v_1^k|=((2p-2)k,(2p-2)k-1)$ and $|\alpha_1|=((2p-2)k-1,(2p-2)k-1)$ where the second grading is the May filtration grading. Since the topological Hochschild-May  spectral sequence has the grading convention 
\[ S/p_{s,t} (E_0^*\mathbb{J})\Rightarrow S/p_s( j) \]
we will adjust this so that the spectral sequence is of the form 
\[ S/p_{p,q}(E_0^*\mathbb{J}) \Rightarrow S/p_{p+q} (j )\] 
where $t=q$, $p+q=s$, and therefore $p=s-t$. This makes more sense algebraically because we can think of $S/p_{*,*}(E_0^*\mathbb{J})$ as a bigraded ring whose totalization $\Tot_*(S/p_{*,*}(E_0^*\mathbb{J})$ is additively isomorphic to $S/p_*(j)$. Consequently, in the remainder of this section we will use the grading convention $|v_1^k|=(1,(2p-2)k-1)$ and $|\alpha_1|=(0,(2p-2)k-1)$. 

We will write $F_i=\{x \in P(v_1)\otimes E(\alpha_1) : \mfilt(x)\ge i\}$. This produces a filtration of the chain complex associated to the cyclic bar complex 
\[
\xymatrix{
F_0 & \ar[l] F_0\otimes F_0 & \ar[l]  F_0\otimes F_0 \otimes F_0 & \ar[l] \ldots \\
F_1 \ar[u] & \ar[l] F_1\otimes F_0 +F_0 \otimes F_1 \ar[u] &  \ar[l] \sum_{i+j+k=1}F_i\otimes F_j\otimes F_k \ar[u] & \ar[l] \ldots \\
F_2 \ar[u] & \ar[l] \sum_{i+j=2} F_i\otimes F_j \ar[u] &  \ar[l] \sum_{i+j+k=2}F_i\otimes F_j\otimes F_k \ar[u] & \ar[l] \ldots \\
\vdots \ar[u] & \vdots \ar[u] &\vdots \ar[u] \\
}
\]
and the usual spectral sequence of a double chain complex produces a Hochschild-May spectral sequence 
\[ E_{s,p,q}^1=HH_{s,p,q}(E_0^*F_{\bullet}) \Rightarrow HH_{s,p+q}(P(v_1)\otimes E(\alpha_1)) \]
where the $s$ is the homological grading, $p$ internal grading, and $q$ is the May filtration grading, after applying the linear transformation described above. Here $HH_{s,p,q}(E_0^*F_{\bullet})$ is the Hochschild homology of the bigraded ring $E_0^*F_{\bullet}$, which agrees with $HH_s(\Tot E_0^*F_{\bullet})$, and $HH_{s,p+q}(P(v_1)\otimes E(\alpha_1))$ is Hochschild homology of the graded ring $P(v_1)\otimes E(\alpha_1)$. 

Since the filtration $F_{\bullet}$ is clearly complete, Hausdorff, and exhaustive, we know the spectral sequence converges conditionally by Boardman \cite[Thm. 9.2]{Boa98}. For each fixed $p$, the degree $p$ part of the filtration $F_{\bullet}$ is eventually trivial. Therefore, when we consider the spectral sequence as a tri-graded spectral sequence it is clear that the spectral sequence converges strongly by \cite[Thm. 7.1]{Boa98}. The differential convention, for $r\ge 0$, is 
\[ d_r: E^{s,p,q}_r \longrightarrow E^{s-1,p,q+r}_r \]
as can be easily determined by usual differential convention of a spectral sequence associated to a filtered complex. Since $E_0^*F_{\bullet}\cong S/p_*(E_0^*\mathbb{J})$, we have proven the following lemma. 
\begin{lem}\label{HHMay}
There is a strongly convergent Hochschild-May spectral sequence 
\begin{equation} \label{HHM} E_{s,p,q}^1=HH_{s,p,q}(S/p_*(E_0^*\mathbb{J})) \Rightarrow HH_{s,p+q}(S/p_*(j)). \end{equation}
where $s$ is the homological degree,  $p$ is the internal grading, and $q$ is the May filtration grading.
\end{lem}
Note that the Hochschild homology $HH_*(S/p_*(E_0^*\mathbb{J}) )$ is the Hochschild homology of a {\em bigraded} ring, but this agrees with the Hochschild homology of the totalization $\Tot_*S/p_*E_0^*\mathbb{J}$. It therefore suffices to consider the total degree of elements in $S/p_*(E_0^*\mathbb{J})$ to compute the $E^1$-page of the algebraic Hochschild-May spectral sequence. Note that there is an isomorphism of graded rings
\[S/p_*(E_0^*\mathbb{J})\cong \mathbb{F}_p[x_1,x_2,x_3,x_4,\ldots ]/(x_i x_j | 0\le i\le j<\infty)\] 
where the total degrees are given by $|x_{2i-1}|=(2p-2)i-1$ and $|x_{2i}|=(2p-2)i$. 
Next, we observe that $S/p_*(E_0^*\mathbb{J})$ can be written as a filtered colimit $\underset{r\to \infty}{\colim \hspace{.05in}} A_{2r}$ where 
\begin{equation}\label{Ar} A_{2r}=\mathbb{F}_p[x_1,x_2,x_3,x_4,\dots ,x_{2r} ]/(x_ix_j | 0\le i\le j<2r)\end{equation} 
with the same grading convention as above. 
Since Hochschild homology commutes with filtered colimits, it suffices to compute $HH_*(A_{2r})$. This computation is known in the ungraded context due to Geisser-Hesselholt \cite[Lem. 2.2]{MR2875843}. We recall their setup since much of the computation carries over to the graded context. We will write 
$B^{\cy}_{\bullet}(A_{2r})$ for the cyclic bar complex of $A_{2r}$ such that $B^{\cy}(A_{2r}):=|B^{\cy}_{\bullet}(A_{2r})|$ and $HH_*(A_{2r})\cong \pi_*B^{\cy}(A_{2r})$. Note that in the graded context it is necessary to define the structure maps in the cyclic bar complex with a sign that depends on the grading (the Koszul sign convention) as in Loday \cite[Sec. 5.3.2]{MR1600246}. Therefore, the maps in the graded context are the same as those in \cite[Sec. 2]{MR2875843} except for the maps
\begin{equation}\label{dn} d_n(a_0\otimes a_1 \otimes \dots \otimes a_n)=(-1)^{|a_n|(|a_0|+|a_1|+\dots |a_{n-1}|)}(a_n \cdot a_0 \otimes a_1 \otimes \dots \otimes a_{n-1}),\end{equation}
\begin{equation}\label{tm} t_m(a_0\otimes a_2 \otimes  \dots \otimes a_m)=(-1)^{|a_m|(|a_0|+|a_1|+\dots |a_{m-1}|)} (-1)^m(a_m \otimes a_0 \otimes \dots \otimes a_{m-1}). \end{equation}
We will use the notation $\bar{t}_m$ for the operator 
\begin{equation}\label{tm2} \bar{t}_m(a_0\otimes a_2 \otimes  \dots \otimes a_m)=(a_m \otimes a_0 \otimes \dots \otimes a_{m-1}). \end{equation}
Following \cite{MR2875843}, we write $\omega$ for a function $\omega\co \{0,1, \dots , m-1\} \to \{x_1,x_2, \ldots , x_{2r}\}$, which we call a word with letters $\{x_1,x_2, \ldots , x_{2r}\}$, and we write $[\omega]$ for the $C_{m}$ orbit of a word where $C_{m}$ acts by cyclic permutation. The length of the orbit $[\omega]$ will be called the period of $[\omega].$ 
By convention, the empty word will be written as $[0]$ and it has period $1$. There is a splitting of graded $\mathbb{F}_p$ vector spaces
\[ B^{\cy}_{\bullet} (A_{2r})=\bigoplus_{[\omega]}B^{\cy}_{\bullet} (A_{2r};[\omega ]) \]
where $B^{\cy}_{\bullet} (A_{2r};[\omega ])$, for $\omega = (x_{i_0},x_{i_1}, \dots x_{i_m})$ and $m\ge 1$, is the sub-cyclic graded $\mathbb{F}_p$ vector space of $B^{\cy}_{\bullet} (A_{2r})$ generated by 
$x_{i_1}\otimes \dots \otimes x_{i_m}$. Here a cyclic graded $\mathbb{F}_p$ vector space should be interpreted as a functor $\Lambda^{\op}\to \mathbb{F}_p\text{-mod}$ where $\Lambda$ is Connes' cyclic category and $\mathbb{F}_p\text{-mod}$ is the category of $\mathbb{F}_p$ vector spaces. In other words, we are freely adding in all degeneracy and cyclic operations acting on $x_{i_1}\otimes \dots \otimes x_{i_m}$. We will write 
\begin{equation}\label{not} HH_*(A_{2r};[\omega])\cong \pi_*|B^{\cy}(A_{2r};[\omega])|. \end{equation}
\begin{lem}
Let $A_{2r}$ be the graded $\mathbb{F}_p$-algebra defined in \eqref{Ar} and $HH_*(A_{2r};[\omega])$ be the summand of $HH_*(A_{2r})$ defined in \eqref{not}. 
Let $\omega=(x_{i_1},\dots ,x_{i_m})$ be a word with letters $\{x_1,x_2,\ldots, x_{2r}\}$ of length $m\ge 0$ and period $\ell \ge 1$. 
Then if $m=0$, $HH_0(A_{2r};[\omega])\cong \mathbb{F}_p\{1\}$. 
For $m\ge 1$, if 
$(m-1)\ell+(|x_{i_{m-\ell+1}}|+\ldots |x_{i_m}|)(|x_{i_1}|+\ldots +|x_{i_{m-\ell}}|)$ is even then 
$HH_{m-1}(A_{2r};[\omega])$ is a free $\mathbb{F}_p$ vector space of rank one generated by the cycle $x_{i_1}\otimes \dots \otimes x_{i_m}$ and $HH_{m}(A_{2r};[\omega])$ is a free $\mathbb{F}_p$ vector space of rank one generated by the cycle $N(x_{i_1}\otimes\dots \otimes x_{i_m})$ defined as
\[\sum_{0\le u<\ell}\bar{t}_ms_{m-1}t_{m-1}^u(x_{i_1}\otimes\dots \otimes x_{i_m}).\] 
For $m\ge 1$, if  $(m-1)\ell+(|x_{i_{m-\ell+1}}|+\ldots |x_{m}|)(|x_{i_1}|+\ldots +|x_{i_{m-\ell}}|)$ is odd, then 
\[HH_{m-1}(A_{2r};[\omega])\cong HH_{m}(A_{2r};[\omega])\cong 0.\] 
\end{lem}
\begin{proof}
We will adjust the proof of \cite[Lem. 2.2]{MR2875843} to the graded context. 
To compute $HH_*(A_{2r};[\omega])$ it suffices to compute $H_*(D_\bullet)$ where $D_{\bullet}$ is the associated normalized chain complex \cite[Thm. 8.3.8]{MR1269324}. When $m=0$, then $D_{\bullet}=\mathbb{F}_p\{1\}$ where $\mathbb{F}_p\{1\}$ a the free $\mathbb{F}_p$ vector space generated by $1$ concentrated in degree zero, so $H_*(D_{\bullet})\cong \mathbb{F}_p\{1\}$ as well. Now observe that the internal grading of all elements in $B^{\cy}(A_{2r};[\omega])$ are the same so the same method as \cite[Lem. 2.2]{MR2875843} applies in the graded context except that the boundary maps must be adjusted according to the grading. 

The chain complex computing $HH_*(A_{2r};[\omega])$ is concentrated in degree $m$ and $m-1$ with 
\[ D_m=\mathbb{F}_p\{\bar{t}_ms_{m-1}t_{m-1}^k(x_{i_1}\otimes \ldots x_{i_m})| 0\le k<\ell\}\] 
and 
\[D_{m-1}=\mathbb{F}_p\{t_{m-1}^k(x_{i_1}\otimes \ldots x_{i_m})| 0\le k<\ell\} \]
and differential $d$ given by the the usual graded Hochschild differential convention \cite[Sec. 5.3.2]{MR1600246}. 
We split into three cases. First, consider when $(m-1)\ell+(|x_{i_{m-\ell+1}}|+\ldots +|x_{i_m}|)(|x_{i_1}|+\ldots +|x_{i_{m-\ell}}|)$ is odd. 
Let $D^{\prime}$ be the chain complex concentrated in degrees $m$ and $m-1$ with $D^{\prime}_m= D^{\prime}_{m-1}=\mathbb{F}_p[C_{\ell}]$ and differential 
$d^{\prime}$ given by multiplication by $1-\tau$. 
Define $\alpha\co D^{\prime} \to D$ by 
\[ \alpha_m(\tau^u)= t_{m-1}^u(x_{i_1}\otimes \ldots \otimes x_{i_m}) \]
\[ \alpha_{m-1}(\tau^u)= \bar{t}_ms_{m-1}t_{m-1}^u(x_{i_1}\otimes \ldots \otimes x_{i_m})\]
and define 
$N=1+\tau+\ldots \tau^{\ell-1}.$
Note that, by a straightforward computation,
\begin{equation}\label{dN} \begin{array}{l} d(N(x_{i_1}\otimes \ldots \otimes x_{i_m}))=  \\ x_{i_1}\otimes \ldots x_{i_m}-(-1)^{(m-1)\ell+(|x_{i_{m-\ell+1}}|+\ldots +|x_{i_m}|)(|x_{i_1}|+\ldots +|x_{i_{m-\ell}}|)}x_{i_1}\otimes \ldots x_{i_m}
\end{array}
\end{equation}
where the sign on the last term is determined by \eqref{dn}.
Since $(m-1)\ell+(|x_{i_{m-\ell+1}}|+\ldots +|x_{i_m}|)(|x_{i_1}|+\ldots +|x_{i_{m-\ell}}|)$ is even by assumption, $\alpha$ is an isomorphism of chain complexes and $H_mD^{\prime}=\mathbb{F}_p\{N\}$ and $H_{m-1}D^{\prime}=\mathbb{F}_p\{1\}$ and otherwise $H_k(D^{\prime})=0$. Thus, $H_m(D)\cong\mathbb{F}\{N(x_{i_1}\otimes \ldots \otimes x_{i_m})\}$, $H_{m-1}(D)\cong \mathbb{F}_p\{x_{i_1}\otimes \ldots \otimes x_{i_m}\}$ and $H_k(D)\cong 0$ otherwise. (Note that in \cite[Lem. 2.2]{MR2875843} the signs of $\alpha_m(\tau^u)$ and $\alpha_{m-1}(\tau^u)$ seem to differ from ours significantly, however this is accounted for by our convention for the sign of $t_m^u$ in formula \eqref{tm}, which differs from Geisser-Hesselholt's convention. Our convention is standard for Hochschild homology of a graded ring.) 
 
When $(m-1)\ell+(|x_{i_{m-\ell+1}}|+\ldots +|x_{i_m}|)(|x_{i_1}|+\ldots +|x_{i_{m-\ell}}|)$ is odd and $\ell$ is odd, consider the chain complex $D^{\prime \prime}$ concentrated in degrees $m$ and $m-1$ with $D^{\prime \prime}_m=D^{\prime \prime}_{m-1}=\mathbb{F}_p[C_{\ell}]$ and boundary map $d^{\prime \prime}=1+\tau$. Then let $\beta\co D^{\prime \prime}\to D$ be defined by the formulas
\[ \beta_m(\tau^u)=(-1)^{u}t_{m-1}^u(x_{i_1}\otimes \ldots \otimes x_{i_m}) \]
\[ \beta_{m-1}(\tau^u)=(-1)^{u}\bar{t}_ms_mt_{m-1}^u(x_{i_1}\otimes \ldots \otimes x_{i_m}). \]
Since $(m-1)\ell+(|x_{i_{m-\ell+1}}|+\ldots+ |x_{i_m}|)(|x_{i_1}|+\ldots + |x_{i_{m-\ell}}|)$ is odd by assumption and $\ell$ is odd, the differential \eqref{dN} is compatible with $\beta$ and $d^{\prime \prime}$ and $\beta$ is an isomorphism of chain complexes. Since $1+\tau\co \mathbb{F}_p[C_{\ell}]\to \mathbb{F}_p[C_{\ell}]$ is an isomorphism when $\ell$ is odd, $H_k(D)\cong 0$ for all $k\ge 0$. 

When $(m-1)\ell+(|x_{i_{m-\ell+1}}|+\ldots |x_{i_m}|)(|x_{i_1}|+\ldots |x_{i_{m-\ell}}|)$ is odd and $\ell$ is even, consider the chain complex $D^{\prime \prime \prime}$ concentrated in degrees $m$ and $m-1$ with $D^{\prime \prime \prime}_m=D^{\prime \prime \prime}_{m-1}=\mathbb{F}_p[x]/(x^{\ell}+1)$ and boundary map $d^{\prime \prime \prime}=1+y$. Then let $\beta\co D^{\prime \prime \prime}\to D$ be the defined by the formulas
\[ \beta_m(y^u)=(-1)^{u}t_{m-1}^u(x_{i_1}\otimes \ldots \otimes x_{i_m}) \]
\[ \beta_{m-1}(y^u)=(-1)^{u}\bar{t}_ms_mt_{m-1}^u(x_{i_1}\otimes \ldots \otimes x_{i_m}). \]
Since $(m-1)(\ell)+(|x_{i_{m-\ell+1}}|+\ldots +|x_{i_m}|)(|x_{i_1}|+\ldots +|x_{i_{m-\ell}}|)$ is odd the differentials $d$ and  $d^{\prime \prime}$ are compatible with $\beta$. Also, $y^{\ell}=-1$ maps to 
\[(-1)^{\ell}\bar{t}_ms_mt_{m-1}^u(x_{i_1}\otimes \ldots \otimes x_{i_m}=(-1)^{\ell}(-1)^{(m-1)\ell+(|x_{i_{m-\ell+1}}|+\ldots |x_{i_m}|)(|x_{i_1}|+\ldots |x_{i_{m-\ell}}|)}1\otimes x_{i_1}\otimes \ldots \otimes x_{i_m}\]
 so since $\ell$ is even and $(m-1)\ell+(|x_{i_{m-\ell+1}}|+\ldots |x_{i_m}|)(|x_{i_1}|+\ldots |x_{i_{m-\ell}}|)$ is odd by assumption, $\beta$ is an isomorphism of chain complexes. Since $1+y\co \mathbb{F}_p[y]/(y^{\ell}+1)\to \mathbb{F}_p[y]/(y^{\ell}+1)$ is an isomorphism, $H_k(D)\cong 0$ for all $k\ge 0$. 
\end{proof}
From here on we will return to writing $\alpha_1v_1^k$ and $v_1^{k+1}$ for $x_{2k-1}$ and $x_{2k+2}$ respectively for $k\ge 0$. By an easy computation of the shuffle product in $HH_*(S/p_*E_0^*\mathbb{J})$, we see that
the shuffle product on $1\otimes \alpha_1$ is given by the formula 
\[ (1\otimes \alpha_1)^{\# n}=n\otimes \underset{n}{\underbrace{\alpha_1 \otimes \ldots \otimes \alpha_1}}.\]
Since $1\otimes \alpha_1$ is the usual cycle representative for $\sigma \alpha_1$ the shuffle product above makes $\gamma_n(\sigma \alpha_1)$ sensible notation for $1\otimes \underset{n}{\underbrace{\alpha_1 \otimes \ldots \otimes \alpha_1}}$. 

The differentials can be easily determined using the definition of a spectral sequence of a filtered chain complex. We therefore simply recall almost verbatim the discussion of the behavior of the differentials from \cite[Prop. 2.1]{150102547S} in our special case. 
\begin{lem}[cf. Salch {\cite[Prop. 2.1]{150102547S}}] \label{lem HHM}
The differential in the spectral sequence is computed on a class 
\[x \in HH_{*,*}(S/p_*(E_0^*\mathbb{J}), S/p_*(E_0^*\mathbb{J}))\] 
by computing a homogeneous cycle representative $y$ for $x$ in the standard Hochschild chain complex for $S/p_*(E_0^*\mathbb{J})$, lifting y to a homogeneous chain $\tilde{y}$ in the standard Hochschild chain complex for $A$, applying the Hochschild differential $d$ to $\tilde{y},$ then taking the image of $d(\tilde{y})$ in the standard Hochschild chain complex for $S/p_*(E_0^*\mathbb{J})$.
\end{lem}

\begin{cor}\label{key algebraic corollary 1}
The spectral sequence \eqref{HHM} collapses at the $E^2$-page with an additive isomorphism $E^2_{*,*}\cong E_{*,*}^{\infty} \cong E(\alpha_1)\otimes P(v_1)\otimes \Gamma(\sigma \alpha_1)\otimes E(\sigma v_1)$. 
\end{cor}
\begin{proof}
By Lemma \ref{lem HHM}, given cycles $N(\alpha_1^{\epsilon_1} v_1^{i_1} \otimes \alpha_1^{\epsilon_2}v_1^{i_2} \otimes  \ldots \alpha_1^{\epsilon_m}v_1^{i_m})$ and $\alpha_1^{\epsilon_0} v_1^{i_0}\otimes \alpha_1^{\epsilon_1}v_1^{i_1}\otimes \ldots \alpha_1^{\epsilon_m}v_1^{i_m}$ in the cyclic bar complex for $S/p_*E_0^*\mathbb{J}$, the differentials in the algebraic Hochschild-May spectral sequence are given by 
the following formulas. First, we compute
\[ 
 \begin{array}{rc}
	      d^1(N(\alpha_1^{\epsilon_1} v_1^{i_1} \otimes \alpha_1^{\epsilon_2}v_1^{i_2} \otimes  \ldots \otimes \alpha_1^{\epsilon_m}v_1^{i_m})) &= \\
	       \sum_{0\le u<\ell}d^1 (\bar{t}_ms_mt_{m-1}^u(\alpha_1^{\epsilon_1} v_1^{i_1} \otimes \alpha_1^{\epsilon_2}v_1^{i_2} \otimes  \ldots \otimes \alpha_1^{\epsilon_m}v_1^{i_m})) &= \\
	       \sum_{0\le u<\ell}\gamma_u d^1 (1\otimes \alpha_1^{\epsilon_{m-u+1}} v_1^{i_{m-u+1}} \otimes \alpha_1^{\epsilon_{m-u+2}}v_1^{i_{m-u+2}} \otimes  \ldots \otimes \alpha_1^{\epsilon_{m-u}}v_1^{i_{m-u}})&
	      \end{array}
\]
where $\gamma_u =(-1)^{m-1}(-1)^{(|\alpha_1^{\epsilon_{m-u+1}}v_1^{i_{m-u+1}}|+\ldots + |\alpha_1^{\epsilon_m}v_1^{i_m}|)(|\alpha_1^{\epsilon_1}v_1^{i_1}+\ldots + |\alpha_1^{\epsilon_{m-u}}v_1^{i_{m-u}}|)}$
and 
\[ \begin{array}{rc}	  
d^1 (1\otimes \alpha_1^{\epsilon_{m-u+1}} v_1^{i_{m-u+1}} \otimes \alpha_1^{\epsilon_{m-u+2}}v_1^{i_{m-u+2}} \otimes  \ldots \otimes \alpha_1^{\epsilon_{m-u}}v_1^{i_{m-u}}))&=  \\
 -  1\otimes \alpha_1^{\epsilon_{m-u+1}+\epsilon_{m-u+2} }v_1^{i_{m-u+1}+i_{m-u+2} } \otimes \alpha_1^{\epsilon_{m-u+3} } v_1^{i_{m-u+3} }\otimes \ldots \otimes  \alpha_1^{\epsilon_{m-u} }v_1^{ i_{m-u} } +     &  \\
	  1\otimes \alpha_1^{\epsilon_{m-u+1}}v_1^{i_{m-u+1}}\otimes \alpha_1^{\epsilon_{m-u+2}+\epsilon_{m-u+3}}v_1^{i_{m-u+2}+i_{m-u+3}}\otimes \alpha_1^{\epsilon_{m-u+4}}v_1^{i_{m-u+4}}\otimes \ldots \otimes \alpha_1^{\epsilon_{m-u} }v_1^{ i_{m-u} } &\\
	      + \ldots + & \\
	    (-1)^{m-1}  1\otimes \alpha_1^{\epsilon_{m-u+1}}v_1^{i_{m-u+1}}\otimes \ldots \alpha_1^{\epsilon_{m-u-1}+\epsilon_{m-u} }v_1^{i_{m-u-1} +i_{m-u}} &\\
	    + {E_u}
	      \end{array}
\]
where $\sum_{0\le u<\ell}E_u=0$. Here $\epsilon_i\in \{0,1\}$ for all  $0\le i\le m$, $m\ge 2$, and $\ell$ is the length orbit of the $C_m$ action. 

Consequently, all of the cyclic bar complex cycles of the form $N(\alpha_1^{\epsilon_1} v_1^{i_1} \otimes \alpha_1^{\epsilon_2}v_1^{i_2} \otimes  \ldots \otimes \alpha_1^{\epsilon_m}v_1^{i_m})$ where $\epsilon_i=1$ for all $0\le i\le m$ are $d^1$-cycles. When in addition $i_j=0$ for all $0\le j\le m$, then  
\[ d^1(N(\underset{m}{\underbrace{\alpha_1\otimes \ldots \otimes \alpha_1}}))= \underset{m}{\underbrace{\alpha_1\otimes \ldots \alpha_1}}+(-1)^m(-1)^{|\alpha_1|(|\alpha_1|(m-1))}\underset{m}{\underbrace{\alpha_1\otimes \ldots \alpha_1}}\]
where if $m$ is even, $|\alpha_1|(|\alpha_1|(m-1))$ is odd, and if $m$ is odd, then $|\alpha_1|(|\alpha_1|(m-1))$ is even so in either case $(-1)^m(-1)^{|\alpha_1|(|\alpha_1|(m-1)}=-1$ and the differential is zero. It is also easy to observe from the formula above that the element $N(\alpha_1\otimes \ldots \otimes \alpha_1)$ cannot be a boundary. If $\epsilon_i=1$ for $0\le i\le m$ except $\epsilon_k=0$ and $i_j=0$ for all $0\le j\le m$ except $i_k=1$, then we compute 
$d^1(N(v_1\otimes \alpha_1\otimes \ldots \alpha_1))=0$. All other cyclic bar complex cycles of the form $N(\alpha_1^{\epsilon_1} v_1^{i_1} \otimes \alpha_1^{\epsilon_2}v_1^{i_2} \otimes  \ldots \otimes \alpha_1^{\epsilon_m}v_1^{i_m})$ are either the source of a differential or they are boundaries. 

Similarly, there are differentials
\[ 
 \begin{aligned}
d^1(\alpha_1^{\epsilon_0} v_1^{i_0}\otimes \alpha_1^{\epsilon_1}v_1^{i_1}\otimes \ldots \alpha_1^{\epsilon_m}v_1^{i_m})= \\\alpha_1^{\epsilon_0+\epsilon_1} v_1^{i_0+i_1}\otimes \alpha_1^{\epsilon_2}v_1^{i_2} \otimes \ldots \alpha_1^{\epsilon_m}v_1^{i_m}  -   
\alpha_1^{\epsilon_0}v_1^{i_0}\otimes \alpha_1^{\epsilon_1+\epsilon_2}v_1^{i_1+i_2}\otimes \ldots \otimes \alpha_1^{\epsilon_m}v_1^{i_m} + \ldots +  \\
(-1)^{m}(-1)^{|\alpha_1^{\epsilon_m}|(|\alpha_1^{\epsilon_0}v_1^{i_0}|+\ldots + |\alpha_1^{m-1}v_1^{i_{m-1}}|)}\alpha_1^{\epsilon_0+\epsilon_m}v_1^{i_0+i_m}\otimes \alpha_1^{\epsilon_1}v_1^{i_1} \otimes \ldots \alpha_1^{\epsilon_{m-1}}v_1^{i_{m-1}}
\end{aligned}
\]
where $\epsilon_i\in \{0,1\}$ for all  $0\le i\le m$ and $m\ge 2$, respectively. 

Consequently, if $\sum_{k=0}^m\epsilon_k\le m$ and $\sum_{k=0}^{m}i_k>2$ then $\alpha_1^{\epsilon_0} v_1^{i_0}\otimes \alpha_1^{\epsilon_1}v_1^{i_1}\otimes \ldots \alpha_1^{\epsilon_m}v_1^{i_m}$ is the source of a differential. Also, if $\sum_{k=1}^{m}i_k\ge 2$, then $\alpha_1^{\epsilon_0} v_1^{i_0}\otimes \alpha_1^{\epsilon_1}v_1^{i_1}\otimes \ldots \otimes \alpha_1^{\epsilon_m}v_1^{i_m}$ is necessarily the boundary of a differential. The only elements of the form $\alpha_1^{\epsilon_0} v_1^{i_0}\otimes \alpha_1^{\epsilon_1}v_1^{i_1}\otimes \ldots \otimes \alpha_1^{\epsilon_m}v_1^{i_m}$ that are not the boundary or source of a differential are the elements of the form
$ \alpha_1^{\epsilon_0}v_1^{i_0}\otimes \alpha_1\otimes \ldots \otimes\alpha_1 $
and 
$\alpha_1^{\epsilon_0}v_1^{i_0}\otimes v_1\otimes \ldots \otimes \alpha_1,$ which survive to become $\alpha_1^{\epsilon_0}v_1^{i_0}\gamma_m\sigma b$ and $\alpha_1^{\epsilon_0}v_1^{i_0}\sigma v_1\gamma_m\sigma b$ respectively in the abutment. Therefore, there is an isomorphism $E^2\cong E^{\infty}$ and an additive isomorphism $E_{*,*}^{\infty} \cong P(v_1)\otimes E(\alpha_1)\otimes E(\sigma v_1)\otimes \gamma(\sigma \alpha_1)$. 
\end{proof}
\begin{figure}
\begin{center}
\tiny
\[  
\begin{sseq}[entrysize=.9cm,ylabelstep=1,ylabels={0;;;;2p-3;;;;4p-5;;;;6p-7;;;;8p-9},xlabelstep=1,xlabels={0;;;2p-3;;;;4p-5;;;;6p-7;;;;}]{14}{14}
\ssdrop{1}
\ssmove{3}{4}
\ssdrop{\alpha}
\ssmove{1}{0}
\ssdrop{v}
\ssdrop{\gamma_1}
\ssmove{1}{0}
\ssdrop{z}
\ssmove{2}{4}
\ssdrop{\alpha v }
\ssmove{0}{-1}
\ssdrop{\alpha\gamma_1 }
\ssmove{1}{1}
\ssdrop{v^2}
\ssmove{0}{-1}
\ssdrop{\gamma_2}
\ssdrop{v \gamma_1}
\ssdrop{\alpha  z}
\ssmove{1}{0}
\ssdrop{z \gamma_1}
\ssdrop{v z}
\ssmove{2}{5}
\ssdrop{\alpha v^2}
\ssmove{0}{-1}
\ssdrop{\alpha v  \gamma_1 }
\ssmove{0}{-1}
\ssdrop{\alpha \gamma_2}
\ssmove{1}{2}
\ssdrop{v^3}
\ssmove{0}{-1}
\ssdrop{\alpha v z}
\ssdrop{v^2\gamma_1}
\ssmove{0}{-1}
\ssdrop{v\gamma_2}
\ssdrop{\gamma_3}
\ssdrop{\alpha z \gamma_1}
\ssmove{1}{0}
\ssdrop{z\gamma_2}
\ssdrop{v z \gamma_1}
\ssmove{0}{1}
\ssdrop{v^2z}
\ssmove{0}{2}
\ssdrop{\dots}
\end{sseq}
\]
\end{center}
\caption{The $E^{\infty}_{*,*}$-page of the Hochschild-May spectral sequence in a range. Here we simply write $\alpha:=\alpha_1$, $v:=v_1$, $\gamma_i=\gamma_i(\sigma \alpha_1)$ and $z=\sigma v_1$. 
}\label{sseq:HHM} 
\end{figure}

\begin{remark}\label{HM remark}
It will also be useful to understand the multiplicative extensions in the spectral sequence \eqref{HHM} explicitly. We include Figure \ref{sseq:HHM} to facilitate this. Note that, as a ring, the $E^{\infty}$-page is isomorphic to 
\[ \left ( E(\sigma v_1)\otimes \Gamma(\sigma \alpha_1)\otimes P(\alpha_1v_1^k,v_1^{k+1}: j,k\ge 0) \right )/\left ((\alpha_1v_1^j)^2,(v_1^{k+1})(v_1^{j+1}),(\alpha_1v_1^j)(v_1^{k+1})  \text{ for all } j,k\ge 0\right )\]

We can describe the multiplicative filtration of the graded ring $P(v_1)\otimes E(\alpha_1)\otimes E(\sigma v_1)\otimes \Gamma(\sigma \alpha_1)$  coming from filtering the abutment as 
\[  \dots    \subset G_2\subset  G_1 \subset G_0=P(v_1)\otimes E(\alpha_1)\otimes E(\sigma v_1)\otimes \Gamma(\sigma \alpha_1)\] 
where 
\begin{align*}
G_s=\{ x \in G_0 : \text{mfilt}(x)\ge s\}
\end{align*}
for $s\ge 1$. Since $\text{mfilt}(xy)\ge \text{mfilt}(x)+\text{mfilt}(y)$ we observe that this is a multiplicative filtration. By inspection, the associated graded of this filtration is exactly 
\[ \bigoplus_{i\ge 0} G_{i}/G_{i+1}= E^{\infty}_{*,*,*}.\]
Trivial products between $w,z\in E_0^*G_{\bullet}$ arise when $\text{mfilt}(zwy)>\text{mfilt}(z)+\text{mfilt}(w)$. Suppose $z$ and $w$ are nontrivial and $zw$ is nontrivial in $G_0$. Then $\text{mfilt}(zw)>\text{mfilt}(z)+\text{mfilt}(w)$ exactly when 
\[ z,w \in \{ \alpha_1v_1^kx ,v_1^{k+1}x : k\ge 0, x\in E(\sigma v_1)\otimes \Gamma(\sigma \alpha_1) \}.\] 
Consequently, there are trivial products 
\[v_1^{j+1}x \cdot v_1^ky=0 \in E^{\infty}_{*,*,*}, \text{ and }\]
\[\alpha_1v_1^{j}x \cdot v_1^ky=0 \in E^{\infty}_{*,*,*}\]
for $j\ge 0$ and $k\ge 1$. We know, however, that in the abutment these products are nontrivial. We therefore observe that there must be multiplicative extensions 
\begin{align} 
\label{mult exts in HMSS} \alpha_1v_1^j x \cdot v_1^{k+1}y = \alpha_1 v_1^{j+k}xy \text{ for } j,k\ge 0,\\
\nonumber v_1^{j} x\cdot v_1^ky = v_1^{j+k}xy \text{ for } j+k\ge 1 
\end{align}
for all $x,y\in E(\sigma v_1)\otimes \Gamma(\sigma \alpha_1)$.  In particular, we know that the abutment is isomorphic to 
\[ HH_*(E(\alpha_1)\otimes P(v_1))\cong E(\alpha_1)\otimes P(v_1)\otimes  \Gamma (\sigma \alpha_1)\otimes E(\sigma v_1) \]
as  bi-graded $\mathbb{F}_p$-algebras by \cite[Prop. 2. 1]{MR1209233}. Therefore, there cannot be further multiplicative extensions. 
\end{remark}

\subsubsection{The $H\mathbb{F}_p$ topological Hochschild-May  spectral sequence}
We now observe that there is an equivalence of commutative ring spectra 
\[E_0^*\mathbb{J}\simeq H\mathbb{Z}_{(p)} \wedge (S\vee \bigvee_{k\ge 1} \Sigma^{(2p-2)k-1}S/p^{\nu_p(k)+1}) \]
where the commutative ring spectrum structure on $(S\vee \bigvee_{k\ge 1} \Sigma^{(2p-2)k-1}S/p^{\nu_p(k)+1})$ is given by iterating the trivial square-zero extension construction. For brevity, we will write $\overline{E_0^*\mathbb{J}}$ for the commutative ring spectrum $(S\vee \bigvee_{k\ge 1} \Sigma^{(2p-2)k-1}S/p^{\nu_p(k)+1})$. This immediately implies that 
\[ THH(E_0^*\mathbb{J})\simeq THH(H\mathbb{Z}_{(p)}) \wedge THH(\overline{E_0^*\mathbb{J}})  \]
since $S^1_{\bullet}\otimes (-)$ is a left adjoint and therefore commutes with coproducts (smash products) of commutative ring spectra. Before computing $H_*THH(E_0^*\mathbb{J})$, we need to prove a lemma about how the B\"okstedt spectral sequence interacts with the May filtration. We will use the notation $E_0^*|\mathcal{M}^{S^1_{\bullet}}(\mathbb{K})|$ for the associated graded of the cofibrant decreasingly filtered commutative ring spectrum $|\mathcal{M}^{S^1_{\bullet}}(\mathbb{K})|$ constructed as the realization of the simplicial decreasingly filtered commutative monoid in spectra $|\mathcal{M}^{S^1_{\bullet}}(\mathbb{K})|$ where $\mathbb{K}$ is a cofibrant decreasingly filtered commutative monoid in spectra, see Definition 3.3.3 \cite{thhmay} for further explanation of this notation. 
\begin{lem}\label{Bok-May}
Let $\mathbb{K}$ be a decreasingly filtered commutative ring spectrum as in Definition \ref{def:DFO}. Then the differentials in the B\"okstedt spectral sequence
\[ HH_*(E_0^*\mathbb{K})\Rightarrow H_*(THH(E_0^*\mathbb{K}))\]
respect the May filtration. 
\end{lem}
\begin{proof}
By the fundamental theorem of the May filtration \cite[Thm. 3.3.10]{thhmay}, there is an equivalence of commutative ring spectra 
\[ E_0^*|\mathcal{M}^{S^1_{\bullet}}(\mathbb{K})|\simeq THH(E_0^*\mathbb{K}).\]
We can filter $E_0^*\mathbb{K}=\bigvee_{i\ge 0} \mathbb{K}_i/ \mathbb{K}_{i+1}$ using the trivial filtration 
\[ E_0^*\mathbb{K} \leftarrow \bigvee_{i\ge 1} \mathbb{K}_i/ \mathbb{K}_{i+1} \leftarrow \bigvee_{i\ge 2} \mathbb{K}_i/ \mathbb{K}_{i+1} \leftarrow \ldots \]
and this is clearly a cofibrant decreasingly filtered commutative ring spectrum. There is therefore a topological Hochschild-May  spectral sequence
\begin{equation}\label{collapse} H_*(E_0^*|\mathcal{M}^{S^1_{\bullet}}(\mathbb{K})|) \Rightarrow H_*(THH(E_0^*\mathbb{K})),\end{equation}
which collapses. The $E^1$-page of the B\"okstedt spectral sequence is the chain complex $H_*(E_0^*\mathbb{K}^{\wedge t})$ with differential given by the standard Hochschild homology differential. There is a filtration of this chain complex by $H_*(\bigvee_{i\ge s} \mathbb{K}_i/ \mathbb{K}_{i+1})$, which produces a double complex and therefore a collapsing spectral sequence as in \eqref{collapse}. If we filter this double complex by B\"okstedt filtration to produce a triple complex, then since the spectral sequence collapses in one direction, this spectral sequence is equivalent to the B\"okstedt spectral sequence. This implies that the B\"okstedt spectral sequence differentials for this particular spectrum $E_0^*\mathbb{K}$, must respect the May filtration. 
\end{proof}
\begin{prop}\label{Bok1} 
There is an isomorphism of $\mathcal{A}_*$-comodules
\[ H_*(THH(E_0^*\mathbb{J}))\cong (\mathcal{A}//E(0))_*\otimes E(\sigma \bar{\xi}_1)\otimes P(\sigma \tau_1)\otimes HH_*(S/p_*(E_0\mathbb{J})). \]
The coaction on elements in $(\mathcal{A}//E(0))_*$ are determined by the coproduct in $\mathcal{A}_*$ and the coaction on elements in $E(\sigma \bar{\xi}_1)\otimes P(\sigma \tau_1)$ is determined by the formula \eqref{coaction sigma}.  
The coaction on a cycle $x_0\otimes x_1 \otimes  \ldots  \otimes x_m$ or $N(x_1\otimes \ldots \otimes x_m)$ for $x_i\in \{ \alpha_1 v_1^{k-1}, v_1^k | k\ge 1\}$ for all $0\le i\le m$ is given by the formulas: \\
\[ \psi(x_0\otimes x_1 \otimes  \ldots \otimes  x_m)= \sum_{i\in I} \bar{\tau}_0\otimes (x_0\otimes \ldots \otimes x_{i-1} \otimes \alpha_1v_1^{k_i-1}\otimes x_{i+1}\otimes \ldots \otimes x_m) + 1\otimes (x_0\otimes x_1 \otimes  \ldots \otimes  x_m) \]
where $x_i=v_1^{k_i}$ for $i\in I\subset \{0,1,\ldots,m\}$ and $x_j=\alpha_1v_1^{k_j}$ if $j\not\in I$ and $0\le j\le m$, and 
\[ \psi(N(x_1 \otimes  x_2\otimes \ldots  \otimes x_m)= \sum_{i\in I} \bar{\tau}_0\otimes N(x_1\otimes \ldots \otimes x_{i-1} \otimes \alpha_1v_1^{k_i-1}\otimes x_{i+1}\otimes \ldots \otimes x_m) + 1\otimes N(x_1 \otimes x_2\otimes \ldots  \otimes x_m) \]
where $x_i=v_1^{k_i}$ for $i\in I$ and $x_j=\alpha_1v_1^{k_j}$ if $j\not\in I$ and $0\le j\le m$. The elements $\alpha_1v_1^{k_0}\otimes \alpha_1v_1^{k_1} \otimes \ldots \otimes \alpha_1v_1^{k_m}$ and $N(\alpha_1v_1^{k_1}\otimes\alpha_1v_1^{k_1} \otimes \ldots \otimes \alpha_1v_1^{k_m})$ are comodule primitives. 
\end{prop}
\begin{proof}
As discussed above, there is an isomorphism 
\[ {H\mathbb{F}_p}_*THH(E_0^*\mathbb{J}) \cong {H\mathbb{F}_p}_*THH(H\mathbb{Z}_{(p)})\otimes {H\mathbb{F}_p}_*THH(\overline{E_0^*\mathbb{J}}).\]
The computation
of ${H\mathbb{F}_p}_*THH(H\mathbb{Z}_{(p)})\cong (\mathcal{A}//E(0))_*\otimes E(\sigma \bar{\xi}_1)\otimes P(\sigma \tau_1)$ along with its $\mathcal{A}_*$-coaction 
is due to B\"okstedt \cite{bok}. The B\"okstedt spectral sequence 
\[ HH_*  ({H\mathbb{F}_p}_*(\overline{E_0^*\mathbb{J}})) \Rightarrow {H\mathbb{F}_p}_*THH(\overline{E_0^*\mathbb{J}}) \]
has input $HH_*(S/p_*(E_0^*\mathbb{J}))$ since 
\begin{align}\label{E0bar} 
{H\mathbb{F}_p}_*(\overline{E_0^*\mathbb{J}})\cong \pi_*(S/p\wedge H\mathbb{Z}_{(p)}\wedge  (S\vee \bigvee_{k\ge 1} \Sigma^{(2p-2)k-1}S/p^{\nu_p(k)+1})))\cong S/p_*(E_0^*\mathbb{J}).
\end{align}
Also, we may regard the B\"okstedt spectral sequence as a tri-graded spectral sequence since the differentials must respect the May filtration grading by Lemma \ref{Bok-May}.
There are no possible differentials in the B\"okstedt spectral sequence that preserve the May filtration and consequently the spectral sequence collapses. 

To compute the coaction on elements in $H_*(THH(\overline{E_0^*\mathbb{J}}))$ we note that $\alpha_1v_1^k$ is by definition the image the fundamental class in the composite 
\[ H_*( \Sigma^{(2p-2)k-1}S) \to H_*(\Sigma^{(2p-2)k-1}S/p^{\nu_p(k)+1}) \to H_*(\overline{E_0^*\mathbb{J}}) \to
H_*(THH(\overline{E_0^*\mathbb{J}}))
\]
and $v_1^k$ is the image of the element $\tau_0$ in 
\[H_*(\Sigma^{(2p-2)k-1}S/p^{\nu_p(k)+1})\cong \Sigma^{(2p-2)k-1}E(\tau_0)\] 
under the composite 
\[ H_*(\Sigma^{(2p-2)k-1}S/p^{\nu_p(k)+1})\to H_*(\overline{E_0^*\mathbb{J}})\to H_*(THH(\overline{E_0^*\mathbb{J}})).\]
Consequently, the Bockstein on $\alpha_1v_1^k$ is $v_1^{k+1}$ for $k\ge0$. This demonstrates that $\alpha_1v_1^k$ is primitive for $k\ge 0$ and the coaction on 
$v_1^{k+1}$ is $\psi(v_1^{k+1})=\tau_0\otimes \alpha_1v_1^k + 1\otimes v_1^{k+1}$ for $k\ge 0$. 

To compute the remaining coactions, first consider a general element of the form 
\[N(\alpha_1^{\epsilon_1}v_1^{i_1}\otimes \ldots \otimes \alpha_1^{\epsilon_m}v_1^{i_m}) \in H_*(THH(\bar{E}_0^*\mathbb{J}))\] 
where $\epsilon_j\in\{0,1\}$ for $1\le j\le m$ and $i_j\ge 1$ for $0\le j\le m$. It is represented by a map
\begin{equation}\label{map N} \Sigma^m \bigwedge_{i=1}^{m}\Sigma^{(2p-2)(\epsilon_i+k_i)-1}S/p^{\nu_p(k_i)+1})\to THH(\bar{E}_0^*\mathbb{J}) \end{equation}
where we let $\Sigma^{(2p-2)(\epsilon_i+k_i)-1}S/p^{\nu_p(k_i)+1}=\Sigma^{(2p-2)(\epsilon_i+k_i)-1}S$ when $\epsilon_i=1$. The map \eqref{map N} is constructed by a simplicial map, which on $m$-simplices is induced by 
\[ S\wedge  \bigwedge_{i=1}^{m}\Sigma^{(2p-2)(\epsilon_i+k_i)-1}S/p^{\nu_p(k_i)+1}) \to S\wedge (\bar{E}_0^*\mathbb{J})^{\wedge m} \to  (\bar{E}_0^*\mathbb{J})^{\wedge m+1} \]
where the simplicial object in the source is the $m$-th simplicial suspension of the constant simplicial spectrum and the object on the right is the cyclic bar complex for $\bar{E}_0^*\mathbb{J}$, whose realization is $THH(\bar{E}_0^*\mathbb{J})$. 
\end{proof}

We now use the $H\mathbb{F}_p$ topological Hochschild-May  spectral sequence in a case where the output is known due to \cite[Thm. 7.15]{MR2171809} in order to detect differentials in the $V(1)$ topological Hochschild-May  spectral sequence.
 
\begin{prop} \label{prop d_0}
The $d^1$-differentials in the $H\mathbb{F}_p$ topological Hochschild-May  spectral sequence exactly correspond to the differentials in the Hochschild-May spectral sequence \eqref{HHM}. Therefore, there is an isomorphism of graded $\mathbb{F}_p$-vector spaces
\[ E_{*,*}^2\cong P(v_1)\otimes (A//E(0))_*\otimes E(\sigma \bar{\xi}_1)\otimes P(\sigma \bar{\tau}_2) \otimes E(\alpha_1)\otimes E(\sigma v_1)\otimes \Gamma(\sigma \alpha_1) \]
\end{prop} 

\begin{proof}
Recall that 
\[ E_{*,*}^1 \cong  (A//E(0))_*\otimes E(\sigma \bar{\xi}_1)\otimes P(\sigma \bar{\tau}_2) \otimes HH_*(S/p_*(E_0^*\mathbb{J})) \]
where all elements in $(A//E(0))_*\otimes E(\sigma \bar{\xi}_1)\otimes P(\sigma \bar{\tau}_2)$ are in May filtration zero. If $x_0\otimes x_1\otimes \ldots x_m$ is a word with letters $x_i\in \{\alpha_1v_1^{k-1},v_1^k | k\ge 1\}$, then 
\begin{align}\label{eq: mfilt} \mfilt(x_0\otimes x_1\otimes \ldots x_m)=\mfilt (x_0) + \mfilt(x_1)+\ldots + \mfilt(x_m) \end{align}
where 
$\mfilt(\alpha_1v_1^{k-1})=\mfilt(v_1^k)=(2p-2)k-1$. 
We know that the abutment is 
\[H_*(j)\otimes E(\lambda_1',\lambda_2)\otimes P(\mu_2)\otimes \Gamma(\sigma b). \] 
To achieve this there must be differentials killing off the classes 
\begin{equation}\label{set} \{ N(\underset{m}{\underbrace{\alpha_1^{\epsilon_1}v_1^{i_1}\otimes \ldots \otimes \alpha_1^{\epsilon_m}v_1^{i_m}}}), \alpha_1^{\epsilon_0}v_1^{i_0}\otimes \underset{m}{\underbrace{\alpha_1^{\epsilon_1}v_1^{i_1}\otimes \ldots \otimes\alpha_1^{\epsilon_m}v_1^{i_m}}} | m\ge 1 \} \end{equation}
when $\sum_{k=1}^{m}i_k+\sum_{k=1}^m\epsilon_1>m$ and $\sum_{k=1}^{m}i_k>1$ for any $\epsilon_0\in \{0,1\}$ and $i_0\ge 0$. The elements in \eqref{set} come in pairs with one element in May filtration one lower and topological degree one more than the other. All the other generators must survive to a later page and therefore there is no differential pattern other than the one stated, which would produce the desired abutment. 
\end{proof}
\begin{rem}
We could also produce the differentials in Proposition \ref{prop d_0} by explicitly representing an element, such as $1\otimes v_1\otimes v_1\otimes v_1=N(v_1\otimes v_1\otimes v_1)$, as a generator of 
\[ H_*(H\mathbb{Z}\wedge \Sigma^{2p-3}H\mathbb{F}_p \wedge \Sigma^{2p-3}H\mathbb{F}_p\wedge \Sigma^{2p-3}H\mathbb{F}_p)\subset H_*(E_0^*\mathbb{J}^{\wedge 4})\] 
For example, the differential 
\[ d_1(1\otimes v_1\otimes v_1)=1\otimes v_1^2\otimes v_1 -1\otimes v_1\otimes v_1^2=N(v_1^2\otimes v_1)\] 
arises from the fact that 
$(E_0^*\mathbb{J})^{\wedge 4}\overset{\id \wedge \mu }{\longrightarrow} (E_0^*\mathbb{J})^{\wedge 3}$ 
is the zero map in degrees greater than zero, but  this is an artifact of the fact that the products of elements in positive degree such as $\alpha_1\cdot \alpha_1$ are zero in $\pi_*j$. 
However, in $\pi_*S/p\wedge j$ there is a relation $\beta (\alpha_1)\cdot \beta (\alpha_1)=v_1\cdot v_1=v_1^2=\beta(\alpha_2)$, which is not visible in $S/p_*(E_0^*j)$ because $v_1^2=0$. We claim that this multiplicative relation is reintroduced by differentials like $d_1(N(v_1\otimes v_1 \otimes v_1))=N(v_1^2\otimes v_1)$. The element $N(v_1^2\otimes v_1)$ can be thought of as $\sigma v_1^2 \cdot \sigma v_1$ using the shuffle product and the notation $1\otimes v_1=\sigma v_1$. If $v_1^2$ were a square of an element in the input, then since $\sigma$ is a derivation it would be the case that $\sigma v_1^2 \cdot \sigma v_1=0$. Since the fact that it is not a square any longer is an artifact of the associated graded construction, this needs to be corrected by differentials in the spectral sequence and this differential does just that. 

In other words, there is a nontrivial map 
\[
\xymatrix{
H\mathbb{Z}\wedge \Sigma^{2p-3}H\mathbb{F}_p \wedge \Sigma^{2p-3}H\mathbb{F}_p\wedge \Sigma^{2p-3}H\mathbb{F}_p \ar[d] \\
 \left ( H\mathbb{Z}\wedge \Sigma^{4p-5}H\mathbb{F}_p\wedge \Sigma^{2p-3}H\mathbb{F}_p\right ) \vee \left ( H\mathbb{Z}\wedge \Sigma^{2p-3}H\mathbb{F}_p\wedge \Sigma^{4p-5}H\mathbb{F}_p\right ),
 }
 \]
that shifts May filtration by one, representing the differential. This sort of argument is standard for the spectral sequence of a double complex and our claim is that it also holds in the spectral setting. We also claim that the differentials in Proposition \ref{prop d_0} can all be constructed using a similar argument, although it would be quite tedious to compute them this way and therefore we give the simpler argument. 
\end{rem}
\begin{prop} \label{prop d_1} 
There are differentials 
\begin{align*}
d^{2p-3}(\bar{\xi}_1)\dot{=}&\alpha_1, & d^{2p-3}(\bar{\tau}_1)\dot{=}&v_1, & d^{2p-3}(\sigma \bar{\tau}_1 )\dot{=} &\sigma v_1,\\
d^{2p-3}(\sigma \bar{\xi}_1 )=&\sigma \alpha_1,  & d^{2p-2}(\tilde{\tau}_1v_1^k)\dot{=}&v_1^{k+1}, & d^{2p-2}(\bar{\xi}_1v_1^k)\dot{=}&\alpha_1v_1^k, \\
d^{2p-2}(\tilde{\tau}_1\alpha_1v_1^{k-1})=&\alpha_1v_1^k & &&& 
\end{align*}
for $k\ge 1$
in the $H\mathbb{F}_p$ topological Hochschild-May  spectral sequence 
\[ (H\mathbb{F}_p)_{s,t}(THH(E_0^*\mathbb{J}))\Rightarrow (H\mathbb{F}_p)_s(THH(j)) \]
and no further differentials besides those generated using the Leibniz rule from the differentials above. 
The surviving classes 
\[ \{ \bar{\xi}_1^{p-1}\alpha_1, \sigma \bar{\xi}_1 \gamma_{p-1}\sigma \alpha_1, \gamma_p(\sigma \alpha_1), (\sigma \bar{\tau}_1)^p, (\sigma \bar{\tau}_1)^{p-1}\sigma v_1 \}\]
map to the classes 
$ \{b, \sigma \tilde{\xi}_1^p, \sigma b, \sigma \tilde{\tau}_2, \sigma \tilde{\xi}_2 \}$ 
respectively in ${H\mathbb{F}_p}_*(THH(j))$ and there no hidden multiplicative extensions in the $H\mathbb{F}_p$ topological Hochschild-May spectral sequence. All other surviving classes map to classes of the same name. 
\end{prop} 
\begin{proof} 
There is a map of $H\mathbb{F}_p$ topological Hochschild-May  spectral sequences
\begin{align}\label{map of homological thh-may ss}
\xymatrix{ 
(H\mathbb{F}_p)_{*,*}(E_0^*\mathbb{J}) \ar[d] \ar@{=>}[r] & (H\mathbb{F}_p)_*(j) \ar[d] \\
(H\mathbb{F}_p)_{*,*}(THH(E_0^*\mathbb{J})) \ar@{=>}[r] & (H\mathbb{F}_p)_*(THH(j) )
}
\end{align}
induced by the map $*\to S^1_{\bullet}$  of simplicial finite sets, where the elements 
$\bar{\xi}_1,\alpha_1v_1^k,\bar{\tau}_1,$ and $v_1^k$ all map to elements of the same name. 
This produces the differentials $d^{2p-3}(\bar{\xi}_1)\dot{=}\alpha_1,$ $d^{2p-3}(\bar{\tau}_1)\dot{=}v_1,$ $d^{2p-2}(\tilde{\tau}_1v_1^k)\dot{=}v_1^{k+1},$ 
and $d^{2p-2}(\bar{\xi}_1v_1^k)\dot{=}\alpha_1v_1^k.$ 

We also know that the abutment $(H\mathbb{F}_p)_m(THH(j))$ is trivial in the range $0< m <2p^2-1$, which forces the differentials $d^{2p-3}(\sigma \bar{\tau}_1 )\dot{=} \sigma v_1$ and 
$d^{2p-3}(\sigma \bar{\xi}_1 )=\sigma \alpha_1$. The resulting $E^{2p-1}$-page is additively isomorphic to ${H\mathbb{F}_p}_*(THH(j))$ with the specified correspondence in the proposition and therefore the spectral sequence must collapse at this page.  In Figure \ref{sseq:HFptHM}, we draw the $E^{2p-1}\cong E^{\infty}$-page, where write $b=\alpha_1\bar{\xi}_1^{p-1}$ and $\gamma_k^{\prime}=\gamma_k(\gamma_p(\sigma \alpha_1))$, $\sigma \bar{\xi}_1^p= \sigma \bar{\xi} \gamma_{p-1}\sigma \alpha_1$ and $\sigma \bar{\xi}_2=(\sigma \bar{\tau}_1)^{p-1}\sigma v_1$ for brevity. 

\begin{figure} 
\begin{center}
\tiny
\[ 
\begin{sseq}[entrysize=.7cm,ylabelstep=1,ylabels={0;;;2p-3;;;2p^2-5p+3;;2p^2-3p;;2p^3-p-3;;;4p^2-8p+3;;;4p^2-6p;;},xlabelstep=1,xlabels={0;;;;2p^2-2p;;;;2p^2-2;;;;4p^2-4p;;;;4p^2-2p-2;;;;;;}]{19}{17}
\ssdrop{1}
\ssmove{3}{3}
\ssdrop{b}
\ssmove{1}{-3}
\ssdrop{\bar{\xi}_1^p}
\ssmove{0}{8}
\ssdrop{\gamma_1^{\prime}}
\ssmove{1}{-2}
\ssdrop{\sigma \bar{\xi}_1^p}
\ssmove{3}{-6}
\ssdrop{\bar{\xi}_2}
\ssmove{1}{0}
\ssdrop{\bar{\tau}_2}
\ssmove{0}{3}
\ssdrop{\sigma \bar{\xi}_2}
\ssmove{2}{0}
\ssdrop{b\bar{\xi}_1^p}
\ssmove{0}{7}
\ssdrop{\gamma_1^{\prime} b}
\ssmove{1}{-10}
\ssdrop{\bar{\xi}_1^{2p}}
\ssmove{0}{8}
\ssdrop{\gamma_1^{\prime}\bar{\xi}_1^p}
\ssdrop{b\sigma \bar{\xi}_1^p}
\ssmove{0}{8}
\ssdrop{\gamma_2^{\prime}}
\ssmove{1}{-3}
\ssdrop{\gamma_1^{\prime}\sigma \bar{\xi}_1^p}
\ssmove{2}{-10}
\ssdrop{b\bar{\xi}_2}
\ssmove{1}{-3}
\ssdrop{\bar{\xi}_1^p\bar{\xi}_2}
\ssmove{0}{3}
\ssdrop{b\bar{\tau}_2}
\ssmove{0}{3}
\ssdrop{b\sigma \bar{\xi}_2}
\ssmove{1}{-6}
\ssdrop{\bar{\xi}_1^p\bar{\tau}_2}
\ssmove{0}{10}
\ssdrop{\gamma_1^{\prime}\sigma \bar{\xi}_2}
\ssmove{1}{-2}
\ssdrop{\sigma \bar{\xi}_1^p\sigma \bar{\xi}_2}
\end{sseq}
\]
\end{center}
\caption{The $E^{\infty}_{s,t}$-page of the $H\mathbb{F}_p$ topological Hochschild-May spectral sequence for $p\ge 3$ for $s\le 4p^2-2p$ and all $t$.}\label{sseq:HFptHM} 
\end{figure}

We may describe the multiplicative filtration $G_{\bullet}^{\prime}$ of the abutment $(H\mathbb{F}_p)_*(THH(j)))$ whose associated graded is the $E^{\infty}$-page as follows. 
First, of course, $G_0^{\prime}=(H\mathbb{F}_p)_*(THH(j)))$. We define $G_{s}^{\prime}$  for $s\ge 1$ by
\[ G_s^{\prime}= \{ x\in G_0^{\prime} : \mfilt{x}\ge s \}.\]
By inspection, the associated graded of the filtration $G_{\bullet}^{\prime}$ is exactly 
\[ \bigoplus_{i\ge 0} G_{i}^{\prime}/G_{i+1}^{\prime}= E^{\infty}_{*,*}.\]
Note that there were hidden multiplicative extensions determined in Remark \eqref{HM remark}, however the multiplicative extensions only involved classes which do not survive the $H\mathbb{F}_p$ topological Hochschild-May spectral sequence, so in fact they do not appear here. We also determined that there are no multiplicative extensions in the top spectral sequence of \eqref{map of homological thh-may ss} in Lemma \ref{lem: homology of ag} and in fact this map of spectral sequences splits off of the bottom spectral sequence. The remaining elements have the property that 
\[ \text{mfilt}(xy)=\text{mfilt}(x)+\text{mfilt}(y)\]
by \eqref{eq: mfilt} so the products are nontrivial already at the $E^{\infty}$-page. We observe that there is an isomorphism of graded $\mathbb{F}_p$-algebras
\[E^{\infty}_{*,*} \cong (H\mathbb{F}_p)_*(THH(j)).\]
This is visible, in a range, by Figure \ref{sseq:HFptHM}.  
Therefore, there cannot be multiplicative extensions because then the abutment would not have the correct multiplicative structure, contradicting the fact that the $H\mathbb{F}_p$ topological Hochschild-May spectral sequence is a strongly convergent multiplicative spectral sequence. 
\end{proof} 
\begin{rem} The behavior of the differentials above leads us to speculate that the differentials in the topological Hochschild-May  spectral sequence commute with the operation $\sigma$. We plan to return to this in future work. 
\end{rem} 
\begin{prop} \label{computation}There is an isomorphism 
 \[ V(1)_*(THH(E_0^*\mathbb{J}))\cong  E(\lambda_1, \epsilon_1)\otimes P(\mu_1)\otimes HH_*(S/p_*E_0^*\mathbb{J}) \] 
 where $|\epsilon_1|=|\lambda_1|=|\sigma \tilde{v}_1|=2p-1$, $|\alpha_1|=2p-3$, $|\mu_1|=2p$, $|\tilde{v}_1|=2p-2$, and $|\sigma \alpha_1|=2p-2$. 
 \end{prop}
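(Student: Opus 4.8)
The plan is to compute $V(1)_*(THH(E_0^*\mathbb{J}))$ by combining the known mod $p$ computation of Proposition \ref{Bok1} with a Bockstein-type argument for the $v_1$-Bockstein spectral sequence, or equivalently to compute directly using a B\"okstedt-type spectral sequence from $V(1)_*(E_0^*\mathbb{J})$. First I would record $V(1)_*(E_0^*\mathbb{J})$: from Example \ref{exm J} we have $\pi_*(S/p\wedge E_0^*\mathbb{J})\cong P(v_1)\otimes E(\alpha_1)$, and smashing further with the $v_1$-cofiber kills the polynomial generator $v_1$ up to the reindexed class I will call $\tilde v_1$ in degree $2p-2$; so $V(1)_*(E_0^*\mathbb{J})\cong E(\alpha_1)\otimes P(\tilde v_1)/(\text{low degree truncation})$ — more precisely $E_0^*\mathbb{J}$ is a generalized Eilenberg--MacLane spectrum so $V(1)\wedge E_0^*\mathbb{J}$ splits as a wedge of suspensions of $V(1)$, and $V(1)_*(E_0^*\mathbb{J})$ is the tensor product $\Lambda(\alpha_1)\otimes P(\tilde v_1)$ as a graded $\mathbb{F}_p$-module, with $|\tilde v_1|=2p-2$, $|\alpha_1|=2p-3$. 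Here one must be slightly careful because $V(1)_*V(1)$ is not just $\mathbb{F}_p$, but since everything in sight is an $H\mathbb{F}_p$-module after the dust settles, the relevant statement is really about $H\mathbb{F}_{p*}$ together with its comodule primitives, which is exactly what Lemma \ref{prim} lets us extract.

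The cleanest route is therefore: (1) use Lemma \ref{prim}, noting $V(1)\wedge THH(E_0^*\mathbb{J})$ is an $H\mathbb{F}_p$-module, so $V(1)_*(THH(E_0^*\mathbb{J}))$ is the subalgebra of $\mathcal{A}_*$-comodule primitives in ${H\mathbb{F}_p}_*(V(1)\wedge THH(E_0^*\mathbb{J}))$; (2) compute ${H\mathbb{F}_p}_*(V(1)\wedge THH(E_0^*\mathbb{J}))$ from Proposition \ref{Bok1} by smashing with $V(1)$, which tensors over ${H\mathbb{F}_p}_*(j)$-type factors so as to replace $(A//E(0))_*\otimes P(v_1)$ by $(A//A(1))_*$-flavored pieces — concretely $V(1)\wedge H\mathbb{Z}\simeq V(1)\wedge H\mathbb{F}_p$ combined with the $v_1$-cofiber sequence turns $(A//E(0))_*\otimes P(v_1)\otimes E(\alpha_1)$ into $\mathcal{A}_*\otimes E(\alpha_1)$-type input with the extra $E(\lambda_1,\lambda_2)$ exterior classes appearing; (3) then identify the primitives. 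The key structural observation is that after smashing with $V(1)$, the coaction formulas from Proposition \ref{Bok1} (e.g. $\psi(v_1)=\bar\tau_0\otimes\alpha_1+1\otimes v_1$, and the induced coactions on $\sigma$-classes) produce exactly the standard pattern by which classes like $\sigma\bar\xi_1$ and $\sigma v_1$ become non-primitive, forcing their primitive combinations to assemble into the exterior generators $\lambda_1,\epsilon_1,\sigma\tilde v_1$ together with the polynomial generators $\mu_1$ (in degree $2p$, the $p$-th power of a $\sigma\bar\tau_1$-type class) and $\tilde v_1$, and the divided power generator $\Gamma(\sigma\alpha_1)$ (whose coaction $\psi(\sigma\alpha_1)=1\otimes\sigma\alpha_1$ is already primitive, so all divided powers survive to the primitives over $\mathbb{F}_p$).

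Concretely, I would set $\mu_1\doteq(\sigma\bar\tau_1)^p$ (which by the hidden extension $(\sigma\bar\tau_i)^p=\sigma\bar\tau_{i+1}$ is essentially $\sigma\bar\tau_2$, a primitive class in degree $2p$), $\lambda_1$ the primitive representative of $\sigma\bar\xi_1^{p}$-type class in degree $2p-1$, $\epsilon_1$ another degree $2p-1$ primitive (the one detecting $\sigma\tilde\xi_1^p$ as in Proposition \ref{prop d_1}), $\sigma\tilde v_1$ the primitive combination involving $\sigma v_1$, and check degrees: $|\lambda_1|=|\epsilon_1|=|\sigma\tilde v_1|=2p-1$, $|\mu_1|=2p$, $|\tilde v_1|=2p-2$, $|\alpha_1|=2p-3$, $|\sigma\alpha_1|=2p-2$, matching the claim. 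The exterior-versus-polynomial dichotomy follows because odd-degree primitives over $\mathbb{F}_p$ square to zero (so $\alpha_1,\lambda_1,\epsilon_1,\sigma\tilde v_1$ generate exterior factors) while $\mu_1,\tilde v_1$ are even and genuinely polynomial, and $\Gamma(\sigma\alpha_1)=P_p(\sigma\alpha_1,\gamma_p(\sigma\alpha_1),\dots)$ by the displayed isomorphism in the Notation section. The main obstacle I expect is bookkeeping the $\mathcal{A}_*$-coaction after smashing with $V(1)$ precisely enough to be sure that the submodule of primitives is exactly the stated tensor product and that there are no unexpected primitive classes in low degrees (where $(A//E(0))_*$ contributes) nor hidden relations among $\lambda_1$, $\epsilon_1$, $\sigma\tilde v_1$; I would control this by comparing Poincar\'e series against the $H\mathbb{F}_p$-computation of ${H\mathbb{F}_p}_*(THH(j))$ restricted along the $v_1$-Bockstein, exactly as in the dimension-counting contradiction argument used in the proof of Proposition \ref{Bok1}.
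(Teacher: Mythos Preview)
Your overall strategy --- apply Lemma \ref{prim} to identify $V(1)_*(THH(E_0^*\mathbb{J}))$ with the $\mathcal{A}_*$-comodule primitives inside $(H\mathbb{F}_p\wedge V(1))_*(THH(E_0^*\mathbb{J}))$, compute the latter from Proposition \ref{Bok1}, and then read off the primitives --- is exactly the paper's approach. The paper simply tensors the answer of Proposition \ref{Bok1} with $(H\mathbb{F}_p)_*V(1)=E(\bar\tau_0,\bar\tau_1)$ via K\"unneth and then lists the generating primitives explicitly.

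However, your explicit identifications of the named classes are mostly incorrect, and this would derail the subsequent differential arguments in Proposition \ref{prop d1}. In the paper's conventions one has $\lambda_1=\sigma\bar\xi_1$ (already primitive, degree $2p-1$), not a $\sigma\bar\xi_1^p$-type class; $\mu_1$ is the primitive correction of $\sigma\bar\tau_1$ itself (namely $\sigma\tilde\tau_1-\bar\tau_0\sigma\bar\xi_1$, degree $2p$), not $(\sigma\bar\tau_1)^p$ --- note that $(\sigma\bar\tau_1)^p=\sigma\bar\tau_2$ lives in degree $2p^2$, not $2p$; and $\epsilon_1=\tilde\tau_1-\bar\tau_1$ (degree $2p-1$) is the primitive difference between the copy of $\bar\tau_1$ coming from $(H\mathbb{F}_p)_*E_0^*\mathbb{J}$ and the one coming from $(H\mathbb{F}_p)_*V(1)$, not anything involving $\sigma\tilde\xi_1^p$. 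Likewise $\tilde v_1=v_1-\bar\tau_0\alpha_1$ and $\sigma\tilde v_1=\sigma v_1-\bar\tau_0\sigma\alpha_1$. Your step (2) is also garbled: smashing with $V(1)$ does not ``replace $(A//E(0))_*\otimes P(v_1)$ by $(A//A(1))_*$-flavored pieces'' nor produce extra exterior classes $E(\lambda_1,\lambda_2)$; it just tensors with $E(\bar\tau_0,\bar\tau_1)$, and all the $\sigma$-classes were already present in Proposition \ref{Bok1}. Getting these representatives right is essential, since the $d_1$-differentials in Proposition \ref{prop d1} are computed by tracking exactly these formulas under the Hurewicz map.
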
 
 \begin{proof}
 We can compute ${H\mathbb{F}_p}_*(V(1)\wedge THH(j))$ where the input is ${H\mathbb{F}_p}_*(V(1)\wedge THH(E_0^*\mathbb{J}))$, using the $H\mathbb{F}_p\wedge V(1)$ topological Hochschild-May  spectral sequence. The differentials are the same and the classes $\bar{\tau}_0$ and $\bar{\tau}_1$ map to classes of the same name in the output. This is useful because there is a map of spectral sequences from the $V(1)$ topological Hochschild-May spectral sequence to the $H\mathbb{F}_p\wedge V(1)$ topological Hochschild-May  spectral sequence induced by the map of $S$-algebras
\[\xymatrix{ S\wedge V(1) \ar[rr]^{\eta \wedge \id_{V(1)}} && H\mathbb{F}_p\wedge V(1) }\] 
where $\eta\co S\rightarrow H\mathbb{F}_p$ is the unit map of $H\mathbb{F}_p$. 
 Due to Lemma \ref{prim}, the map 
 \[ V(1)_*(THH(E_0^* \mathbb{J} ))\lra (H\mathbb{F}_p\wedge V(1))_*(THH(E_0^* \mathbb{J}))\]
  includes $V(1)_*(THH(E_0^*\mathbb{J}))$ into $(H\mathbb{F}_p\wedge V(1))_*(THH(E_0^*\mathbb{J}))$ as the $\mathcal{A}_*$-comodule primitives. 
  
  By Lemma \ref{Bok1}, the elements 
 \begin{equation}\label{comod prim} \{ \alpha_1, v_1-\bar{\tau}_0\alpha_1 , \sigma \tilde{\tau}_1-\bar{\tau}_0\sigma \bar{\xi}_1,  \sigma \bar{\xi}_1, \sigma \alpha_1, \sigma v_1-\bar{\tau}_0\sigma\alpha_1, \tilde{\tau}_1-\bar{\tau}_1\} \end{equation} 
are comodule primitives where we write $\tilde{\tau}_1$ to distinguish the class in ${H\mathbb{F}_p }_*(THH(E_0^*\mathbb{J}))$ from the class $\bar{\tau}_1\in {H\mathbb{F}_p}_*(V(1))$. We rename these classes respectively 
 \[ \{ \alpha_1, v_1, \mu_1, \lambda_1, \sigma\alpha_1, \sigma v_1, \epsilon_1\}. \] 
 In addition, there is a corresponding comodule primitive for each algebra generator of $HH_*(S/p_*\mathbb{J})$ which can be computed by simply subtracting the necessary terms as we did to produce the comodule primitive $v_1-\bar{\tau}_0\alpha_1$. 
 Therefore, the subalgebra of comodule primitives is isomorphic to 
 \[ E(\lambda_1,\epsilon_1 )\otimes P(\mu_1)\otimes HH_*(S/p_*(E_0^*\mathbb{J}))\] 
 and the result follows from Lemma \ref{Bok1} and Lemma \ref{prim}. 
\end{proof}
 We now consider the map of topological Hochschild-May  spectral sequences 
 \[ 
 \xymatrix{ 
 V(1)_*(THH(E_0^*\mathbb{J}))\ar[d]^{f} \ar@{=>}[r] & V(1)_*(THH(j))\ar[d]  \\
 (H\mathbb{F}_p\wedge V(1))_*(THH(E_0^*\mathbb{J})) \ar@{=>}[r] &  (H\mathbb{F}_p\wedge V(1))_*(THH(j)) }\]
 induced by the map 
 \[ \eta \wedge \id_{V(1)}\co S\wedge V(1)\lra H\mathbb{F}_p\wedge V(1) \]
 where $\eta\co S\rightarrow H\mathbb{F}_p$ is the unit map of the ring spectrum $H\mathbb{F}_p$. 
 \begin{prop}\label{prop d_0 2} 
 The only $d^1$ differentials in the $V(1)$ topological Hochschild-May  spectral sequence are those in the algebraic Hochschild-May spectral sequence, consequently, there is an additive isomorphism 
 \[ E_{*,*}^2\cong E( \lambda_1,\epsilon_1) \otimes P(\mu_1) \otimes E(\alpha_1 )\otimes P(v_1) \otimes \Gamma(\sigma \alpha_1)\otimes E(\sigma v_1)  \]
 where $|\lambda_1|=(2p-1,)$, $|\epsilon_1|=(2p-1,0)$, $|\mu_1|=(2p,0)$, $|\alpha_1v_1^{k-1}|=(2p-3,(2p-2)k-1)$, $|v_1^k|=((2p-2)k,(2p-2)k-1)$ for $k\ge 1$, $|\gamma_{p^k}(\sigma \alpha_1)|=((2p-2)p^k,(2p-3)k)$ and $|\sigma v_1|=(2p-1,2p-2)$. Multiplicatively, there is an isomorphism 
\[ E_{*,*}^2\cong E( \lambda_1,\epsilon_1) \otimes P(\mu_1) \otimes \mathbb{F}_p[x_i | i\ge 1]/\mathfrak{m} \otimes \Gamma(\sigma \alpha_1)\otimes E(\sigma v_1) \]
where $x_{2i}$ corresponds to $v_1^i$ and $x_{2i-1}$ corresponds to $\alpha_1v_1^i$ for $i\ge 1$ and $\mathfrak{m}=(x_1,x_2,\ldots )$.
 \end{prop}
 \begin{proof}
This is an easy consequence of Proposition \ref{prop d_0} since the map 
\[V(1)\wedge THH(E_0^*\mathbb{J})\to {H\mathbb{F}_p}_*(V(1)\wedge THH(E_0^*\mathbb{J}))\] 
is an an isomorphism onto the  sub-algebra of co-module primitives, and in particular it is injective. The $d^1$-differentials therefore correspond to the $d^1$-differentials in ${H\mathbb{F}_p}_*(V(1)\wedge THH(E_0^*\mathbb{J}))$
\end{proof}
 \begin{prop} \label{prop d1}The only $d^{2p-3}$ differentials in the $V(1)$ topological Hochschild-May  spectral sequence are  
\begin{align*}
d^{2p-3}(\lambda_1)\dot{=}&\sigma \alpha_1, \\
d^{2p-3}(\epsilon_1)\dot{=} &v_1, \\
d^{2p-3}(\mu_1)\dot{=}& \sigma  v_1, 
\end{align*}
and those differentials generated by these differentials under the Leibniz rule. There are $d^{2p-2}$ differentials 
\begin{align*}
d^{2p-2}(\epsilon_1v_1^k)\dot{=} &v_1^{k+1}, \text{ and }\\
d^{2p-3}(\epsilon_1\cdot \alpha_1v_1^{k-1})\dot{=}&\alpha_1v_1^k 
\end{align*}
for $k\ge 1$.
The $E^{2p-1}$-page of the $V(1)$ topological Hochschild-May  spectral sequence is therefore isomorphic to 
\[ E(\alpha_1, \lambda_1^{\prime}, \lambda_2 ) \otimes P(\mu_2)\otimes \Gamma(\sigma b) \] 
where $\lambda_1^{\prime}=\lambda_1\gamma_{p-1}(\sigma \alpha_1)$, $\gamma_p(\sigma \alpha_1)=\sigma b$, $\lambda_2=\sigma v_1 \mu_1^{p-1}$, and $\mu_2=\mu_1^p$. The bidegrees of these elements are therefore
$|\lambda_1'|=(2p^2-2p+1,2p^2-5p+4)$, $|\sigma b|=(2p^2-2p,2p^2-3p)$, $|\lambda_2|=(2p^2-1,2p-1)$, and $|\mu_2|=(2p^2,0)$. 
\end{prop}
\begin{proof}
The classes 
 $ \{ v_1, \mu_1, \lambda_1, \sigma\alpha_1,  \sigma v_1, \epsilon_1 \} $
 in the $V(1)$ topological Hochschild-May  spectral sequence map to the classes
  \[ \{ v_1-\bar{\tau}_0\alpha_1 , \sigma \tilde{\tau}_1-\bar{\tau}_0\bar{\xi}_1,  \sigma \bar{\xi}_1, \sigma \alpha_1, \sigma v_1-\bar{\tau}_0\sigma\alpha_1, \tilde{\tau}_1-\bar{\tau}_1\} \] 
in the $H\mathbb{F}_p\wedge V(1)$ topological Hochschild-May  spectral sequence under the map of spectral sequences, denoted $f$, induced by the map $S\wedge V(1)\to H\mathbb{F}_p\wedge V(1)$. There are trivial differentials in the $H\mathbb{F}_p\wedge V(1)$ topological Hochschild-May  spectral sequence
\[ d_r(\bar{\tau}_0)=d_r(\bar{\tau}_1)=0 \] 
for $r\ge 1$ 
and nontrivial differentials 
\begin{align*}
d^{2p-3}(\bar{\xi}_1)\dot{=}&\alpha_1,&  d^{2p-3}(\sigma \bar{\xi} )\dot{=}&\sigma \alpha_1, \\
d^{2p-3}(\tilde{\tau}_1)\dot{=}&v_1, & d^{2p-3}(\sigma \bar{\tau}_1 )\dot{=}& \sigma v_1, \\
d^{2p-2}(\tilde{\tau}_1v_1^k)\dot{=}& v_1^{k+1} &d^{2p-2}(\bar{\xi}_1v_1^k)\dot{=}&\alpha_1v_1^{k}  \\
d^{2p-2}(\tilde{\tau}_1\alpha_1v_1^{k-1})\dot{=}& \alpha_1v_1^{k} && 
\end{align*}
for $k\ge 1$ as well as the $d^1$ differentials in the Hochschild-May spectral sequence \eqref{HHM} by Propositions \ref{prop d_1} and \ref{computation}. 
We will use the formula $fd^r=d^rf$ to compute the differentials. 
Notice that the map $f$ is still injective on the $E^2$-page of the spectral sequences so it makes sense to use the formula $d^r(x)=f^{-1}d^r(f(x))$. We therefore produce differentials
\begin{align*}
d^{2p-3}(\lambda_1)=& f^{-1}(d^{2p-3}(\sigma \bar{\xi}_1))= f^{-1}(\sigma \alpha_1)=\sigma \alpha_1, \\
d^{2p-3}(\epsilon_1)= &f^{-1}d^{2p-3}(\tilde{\tau}_1-\bar{\tau}_1)=f^{-1}(v_1)=v_1 \\
d^{2p-3}(\mu_1)= &f^{-1}(d^{2p-3}(\sigma \tilde{\tau}_1-\bar{\tau}_0\bar{\xi}_1))=f^{-1}(\sigma(v_1)-\bar{\tau}_0\alpha_1)=\sigma(v_1) 
\end{align*}
in the $V(1)$ topological Hochschild-May  spectral sequence as desired. By Figure \eqref{sseq:2}, there are no other possible $d^r$ differentials when $2\le r\le 2p-2$ for bidegree reasons in columns to the left of $4p^2$. The only exceptions to this are the possible differential on $\lambda_1^{\prime}$ hitting $\sigma b$ and the possible differential on $\mu_2$ hitting $\lambda_2$, but these can both be ruled out. Recall that $\lambda_1^{\prime}=\lambda_1\gamma_{p-1}(\sigma \alpha_1)$ and $\gamma_p (\sigma \alpha_1)=\sigma b$. There is a differential $d_{2p-3}(\lambda_1)=\sigma \alpha_1$ and therefore $d_{2p-3}(\lambda_1\gamma_{p-1}(\sigma\alpha_1)=\gamma_{p-1}(\sigma \alpha_1)\cdot \sigma \alpha_1=p\gamma_p(\sigma \alpha_1)=0$ modulo $p$. Similarly, $\mu_2=\mu_1^p$ and $\lambda_2=\sigma v_1 \mu_1^{p-1}$ and the differential $d_{2p-3}(\mu_1)=\sigma v_1$ implies that $d_{2p-3}(\mu_1^p)=p\sigma v_1 \mu_1^{p-1}=0$. Therefore, since all of the algebra generators are in range the depicted in Figure \eqref{sseq:2} except for the elements $\gamma_{p^k}(\sigma \alpha_1)$ for $k\ge 1$, the only remaining possible differentials are differentials whose source is $\gamma_{p^k}(\sigma \alpha_1)$ for $k\ge 1$. We will eliminate this possibility by directly analyzing which elements are in topological degree $(2p^2-2p)p^k-1$, one column to the left of $\gamma_{p^k}(\sigma b)$, and in strictly higher May filtration. Consider a general monomial 
\begin{equation}\label{monomial} \lambda_1^{e_1}\epsilon_1^{e_2}\alpha_1^{e_3}\sigma v_1^{e_4}\mu_1^{\ell}v_1^j\gamma_m(\sigma \alpha_1)\end{equation}
whose topological degree is 
\[ (2p-2)e_1+(2p-1)e_2+(2p-3)e_3+(2p-1)e_4+(2p)\ell+(2p-2)j+(2p-2)m\]
and whose May filtration is $(2p-3)e_4+(2p-2)(j+e_3)-1+(2p-3)m$ where $e_1,e_2,e_3,e_4\in\{0,1\}$, $\ell,j,m\ge 0$, and $j+e_3>0$. When $e_1,e_2,e_3,e_4\in \{0,1\}$, $\ell,j,m\ge 0$, and  $j+e_3=0$, the May filtration of \eqref{monomial}
is $(2p-3)e_4+(2p-3)m$.
In order for this element to be the target of a differential on $\gamma_{p^k}(\sigma \alpha_1)$, the relation
\[ (2p-2)e_1+(2p-1)e_2+(2p-3)e_3+(2p-1)e_4+(2p)\ell+(2p-2)j+(2p-2)m=(2p-2)p^k-1\]
must hold. By rearranging terms, we see that  the relation
\[ (2p-2)(e_1+e_2+e_3+e_4+\ell+j+m-p^k)+e_2-e_3+e_4+2\ell+1=0\]
must hold. Since $e_2-e_3+e_4+2\ell+1\ge 0$, the relation will only hold if 
\begin{equation}\label{relation 1}e_1+e_2+e_3+e_4+\ell+j+m-p^k=0 \end{equation} and  
\begin{equation}\label{relation 2} e_2-e_3+e_4+2\ell+1=0 .\end{equation} 
However, we know the May filtration of \eqref{monomial} must be greater than the May filtration of $\gamma_{p^k}(\sigma \alpha_1)$ and therefore when $j+e_3>0$, 
\[ (2p-3)e_4+(2p-2)(j+e_3)-1+(2p-3)m>(2p-3)p^k \]
and when $j+e_3=0$
\[ (2p-3)e_4+(2p-3)m> (2p-3)p^k.\]
When $j+e_3=0$, the relation reduces to 
\[ e_4+m-p^k>0 \]
and consequently 
$e_1+e_2+e_3+e_4+\ell+j+m-p^k>e_1+e_2+e_3+\ell+j\ge 0$
so \eqref{relation 1} does not hold and therefore the corresponding element \eqref{monomial} cannot be the target of a differential on $\gamma_{p^k}(\sigma b)$. When $j+e_3=0$, then 
\[ (2p-3)e_4+(2p-2)(j+e_3)-1+(2p-3)m>(2p-3)p^k \]
reduces to 
\[ (2p-3)(e_4+j+e_3+m-p^k) >1-j-e_3\]
so $e_4+j+e_3+m-p^k >\frac{1-j-e_3}{2p-3}$ and therefore 
$e_1+e_2+e_3+e_4+\ell+j+m-p^k>e_1+e_2+\ell+\frac{1-j-e_3}{2p-3}\ge 0$
when $j<2p-3$. Thus, we have reduced to the case when $j>2p-3$. We now note that $j$ is the exponent of $v_1$ in \eqref{monomial}. Consider the map of spectral sequences induced by the Hurewicz map 
\begin{equation}\label{hur} V(1)\to H\mathbb{F}_p\wedge V(1) .\end{equation}
There is a differential in the target spectral sequence 
\[d_{2p-2-\epsilon}(\lambda_1^{e_1}\tau_1\alpha_1^{e_3}\sigma v_1^{e_4-1}\mu_1^{\ell}v_1^j\gamma_m(\sigma \alpha_1))=\lambda_1^{e_1}\alpha_1^{e_3}\sigma v_1^{e_4}\mu_1^{\ell}v_1^j\gamma_m(\sigma \alpha_1)\]
where $\epsilon=1$ if $e_1=e_3=e_4-1=\ell=j=m=0$ and $\epsilon=0$ otherwise. Since 
$\lambda_1^{e_1}\epsilon_1\alpha_1^{e_3}\sigma v_1^{e_4-1}\mu_1^{\ell}v_1^j\gamma_m(\sigma \alpha_1)$ maps to 
$\lambda_1^{e_1}\tau_1\alpha_1^{e_3}\sigma v_1^{e_4-1}\mu_1^{\ell}v_1^j\gamma_m(\sigma \alpha_1)$ and $\lambda_1^{e_1}\alpha_1^{e_3}\sigma v_1^{e_4}\mu_1^{\ell}v_1^j\gamma_m(\sigma \alpha_1)$ maps to an element of the same name in the map of $E^2$-pages of spectral sequences induced by the map \ref{hur}, the target of this differential cannot be hit by a differential at an earlier page or else the compatibility of the map of spectral sequences induced by \eqref{hur} with the differentials would be violated. This implies that $e_2=1$ and $j\ge 2p-3$. We now recall that that the relation \eqref{relation 2} also must be satisfied and when $e_2=1$, then $e_2-e_3+e_4+2\ell+1=2-e_3+e_4+2\ell>0$ and thus  \eqref{relation 2} cannot be satisfied. Therefore, there are no possible differentials with $\gamma_{p^k}(\sigma \alpha_1)$ as a source. Note that this same argument works for all differentials of the form $d^r(\gamma_{p^k}(\sigma \alpha_1))$ for $r\ge 1$. We will use this fact in the proof of Theorem \ref{main thm} as well.

Consequently, there are no possible differentials of length $2\le r\le 2p-2$ and in fact there are no other possible differentials besides the ones stated and the ones implied by the Leibniz rule for $r<2p^2-5p+3$ by the same argument.
\end{proof}
\begin{rem}
Note that the only elements in the $E^{2p-1}$ page of the $V(1)$ topological Hochschild-May  spectral sequence that are in the kernel of the map to the $E^{2p-1}$ page of the $H\mathbb{F}_p$ topological Hochschild-May  spectral sequence induced by the Hurewicz map are the multiples of $\alpha_1$. Since the differential on $\alpha_1$ is trivial for bidegree reasons and the spectral sequence obeys the Leibniz rule, the only possible differentials of longer length are differentials with a multiple of $\alpha_1$ as a target. 
\end{rem}
\begin{lem}
There is an isomorphism of graded $\mathbb{F}_p$-algebras
\[ V(1)_*(THH(j;\ell)) \cong E(\lambda_1',\lambda_2)\otimes P(\mu_2)\otimes \Gamma(\sigma b) \] 
\end{lem}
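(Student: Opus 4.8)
The plan is to bypass the May filtration here and exploit the fact that $V(1)$ collapses the connective Adams summand $\ell$, reducing everything to a statement about $THH(j)$ with mod $p$ coefficients. First I would record that, since $\ell$ is a form of $BP\langle 1\rangle$ and $(p,v_1)$ begins a regular sequence whose iterated cofiber is $H\mathbb{F}_p$, there is an equivalence of $\ell$-algebras $V(1)\wedge\ell\simeq H\mathbb{F}_p$, and a fortiori of $j$-algebras via the canonical ring map $j\to\ell$. As $\ell$ is a commutative $j$-algebra we have $THH(j;\ell)\simeq \ell\wedge_j THH(j)$; smashing with $V(1)$ and commuting $V(1)\wedge_S(-)$ past the relative smash product $(-)\wedge_j THH(j)$, which it preserves being a left adjoint, gives
\[ V(1)\wedge THH(j;\ell)\;\simeq\;(V(1)\wedge\ell)\wedge_j THH(j)\;\simeq\;H\mathbb{F}_p\wedge_j THH(j). \]
This is a $j$-algebra receiving a unit map from $H\mathbb{F}_p$, hence an $H\mathbb{F}_p$-module.

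Now I would invoke Lemma~\ref{prim}: $V(1)_*THH(j;\ell)=\pi_*(H\mathbb{F}_p\wedge_j THH(j))$ is the subalgebra of $\mathcal{A}_*$-comodule primitives in ${H\mathbb{F}_p}_*(H\mathbb{F}_p\wedge_j THH(j))$. To compute that homology I would use the $H\mathbb{F}_p$-based Künneth (base-change) spectral sequence
\[ \Tor^{{H\mathbb{F}_p}_*(j)}_{*,*}\!\big(\mathcal{A}_*,\,{H\mathbb{F}_p}_*THH(j)\big)\;\Longrightarrow\;{H\mathbb{F}_p}_*\!\big(H\mathbb{F}_p\wedge_j THH(j)\big); \]
by Theorem~\ref{HFpj} of Angeltveit--Rognes \cite{MR2171809}, ${H\mathbb{F}_p}_*THH(j)\cong{H\mathbb{F}_p}_*(j)\otimes E(\sigma\tilde\xi_1^p,\sigma\tilde\xi_2)\otimes P(\sigma\tilde\tau_2)\otimes\Gamma(\sigma b)$ is a free ${H\mathbb{F}_p}_*(j)$-module, so the spectral sequence collapses onto its zero line and
\[ {H\mathbb{F}_p}_*\!\big(H\mathbb{F}_p\wedge_j THH(j)\big)\;\cong\;\mathcal{A}_*\otimes E(\sigma\tilde\xi_1^p,\sigma\tilde\xi_2)\otimes P(\sigma\tilde\tau_2)\otimes\Gamma(\sigma b), \]
with the $\mathcal{A}_*$-coaction the diagonal one built from the standard coaction on the left factor and the coactions of Theorem~\ref{HFpj} on the right-hand classes.

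The remaining work is to identify the comodule primitives, exactly as in the proof of Proposition~\ref{computation}: beginning with $\sigma\tilde\xi_1^p$, $\sigma\tilde\xi_2$, $\sigma\tilde\tau_2$ and $\sigma b$ one inductively subtracts correction terms (in the style of the primitive $v_1-\bar\tau_0\alpha_1$ from Proposition~\ref{Bok1}), obtaining primitives $\lambda_1'$, $\lambda_2$, $\mu_2$ — the class $\sigma b$ being already primitive since $\psi(b)=1\otimes b$ — and one checks that these generate the primitive subalgebra with no extra generators. Since the homology above is a \emph{cofree} comodule (being the $H\mathbb{F}_p$-homology of a wedge of suspensions of $H\mathbb{F}_p$), comparing Poincaré series against $\mathcal{A}_*\otimes\big(E(\sigma\tilde\xi_1^p,\sigma\tilde\xi_2)\otimes P(\sigma\tilde\tau_2)\otimes\Gamma(\sigma b)\big)$ forces the algebra of primitives to be exactly $E(\lambda_1',\lambda_2)\otimes P(\mu_2)\otimes\Gamma(\sigma b)$; since primitives form a subalgebra this is an isomorphism of graded rings.

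The main obstacle, modest though it is, is this last step: tracking the iterated corrections that promote the $\sigma$-classes to genuine comodule primitives, and confirming (via the dimension count above, or directly from the Dyer--Lashof/Steinberger relations used in Proposition~\ref{Bok1}) that $\Gamma(\sigma b)$ descends to the primitives untouched and nothing else appears. One must also be slightly careful in the first paragraph that $V(1)\wedge\ell\simeq H\mathbb{F}_p$ is realized compatibly with the $j$-algebra structure, so that the reduction to $H\mathbb{F}_p\wedge_j THH(j)$ is legitimate. A heavier alternative that avoids this is to run the THH-May spectral sequence with coefficients (Remark~\ref{ thh-may ss with coeff}) along the filtered map $\mathbb{J}\to\mathbb{L}:=\tau_{\ge\bullet}\ell$, importing the $d_1$-differentials of Proposition~\ref{prop d1} and showing that the divided-power tower $\Gamma(\sigma\alpha_1)$ collapses onto $\Gamma(\sigma b)$.
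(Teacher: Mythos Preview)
Your proposal is correct and follows essentially the same route as the paper: reduce $V(1)\wedge THH(j;\ell)$ to $H\mathbb{F}_p\wedge_j THH(j)$, invoke Lemma~\ref{prim}, collapse the K\"unneth spectral sequence using the Angeltveit--Rognes description of ${H\mathbb{F}_p}_*THH(j)$, and read off the comodule primitives. The paper in fact writes down the explicit corrected primitives $\lambda_1'=\sigma\tilde\xi_1^p-\bar\tau_0\sigma b$, $\lambda_2=\sigma\tilde\xi_2-\bar\xi_1\sigma\tilde\xi_1^p-\bar\tau_1\sigma b$, and $\mu_2=\sigma\tilde\tau_2-\bar\tau_0\sigma\tilde\xi_2-\bar\tau_1\sigma\tilde\xi_1^p+\bar\tau_0\bar\tau_1\sigma b$, which you describe procedurally but do not exhibit; your Poincar\'e-series check that nothing further appears is a nice addition the paper omits.
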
 
\begin{proof}
 Note that there are equivalences 
 \[ V(1)\wedge THH(j;\ell)\simeq THH(j;H\mathbb{F}_p)\simeq H\mathbb{F}_p\wedge_j THH(j) \] 
 and that $H\mathbb{F}_p\wedge_j THH(j)$ is an $H\mathbb{F}_p$-algebra. 
 We can therefore apply Lemma \ref{prim} and Theorem \ref{HFpj} to compute $V(1)_*THH_*(j;\ell)$. 
 The result is the algebra of comodule primitives in ${H\mathbb{F}_p}_*THH(j;H\mathbb{F}_p)$. 
 Since there is an equivalence $THH(j;H\mathbb{F}_p)\simeq THH(j)\wedge_j H\mathbb{F}_p$, we can compute this using the Eilenberg-Moore spectral sequence 
 \[ \Tor^{H_*(j)}_{*,*} ( H_*(THH(j)),H_*(H\mathbb{F}_p)) \Rightarrow  H_*(THH(j;H\mathbb{F}_p)).\]
 Since $H_*THH(j)$ is a free $H_*j$-module by Theorem \ref{HFpj}, this spectral sequence collapses producing an isomorphism
\[ {H\mathbb{F}_p}_*(THH(j;H\mathbb{F}_p)) \cong \mathcal{A}_*\otimes E(\sigma \tilde{\xi}_1^p, \sigma \tilde{\xi}_2)\otimes P(\sigma \tilde{\tau}_2)\otimes  \Gamma(\sigma b) ,\] 
where the coaction on elements in $\mathcal{A}_*$ is determined by the coproduct and the coaction on the remaining elements is determined in Theorem \ref{HFpj}. 

The subalgebra of comodule primitives is isomorphic to 
$E(\lambda_1', \lambda_2)\otimes P(\mu_2)\otimes  \Gamma(\sigma b)$
where 
$\lambda_1'=\sigma \tilde{\xi}_1^p -\bar{\tau}_0\sigma b,$
$\lambda_2= \sigma \bar{\xi}_2 - \bar{\xi}_1 \otimes \sigma \tilde{\xi}_1^p - \bar{\tau}_1\otimes \sigma b,$ and 
$ \mu_2 = \sigma \tau_2 - \bar{\tau}_0 \sigma \tilde{\xi}_2 - \bar{\tau}_1 \sigma \tilde{\xi}_1^p +\tau_0\tau_1 \sigma b. $
\end{proof} 
We have another approach to computing $THH_*(j; j/(p,v_1))=V(1)_*(THH(j))$ by filtering the coefficients $j/(p,v_1)$ using the short filtration 
\begin{equation}\label{filt mod} 0 \lra \Sigma^{2p-3}H\mathbb{F}_p \lra j/(p,v_1) \end{equation}
with associated graded $j$-module $H\mathbb{F}_p\ltimes \Sigma^{2p-3}H\mathbb{F}_p$. It is important to note that this is not a spectral sequence of $V(1)\wedge j$-algebras because $V(1)\wedge j$ is not a commutative ring spectrum and therefore the filtration \eqref{filt mod} is not itself a cofibrant decreasingly filtered commutative monoid in spectra in the sense of \cite[Def. 3.2.2]{thhmay}. The filtration of \eqref{filt mod} is just a cofibrant decreasingly filtered symmetric  $\mathbb{J}$-module in the sense of \cite[Def. A.1.1]{thhmay}, which suffices to construct the spectral sequence without multiplicative structure. 
We use the topological Hochschild-May  spectral sequence with filtered coefficients as follows 
\begin{equation} \label{ss filt coeff} THH_{s,t}(j;H\mathbb{F}_p\ltimes \Sigma^{2p-3}H\mathbb{F}_p) \Rightarrow THH_s(j; j/(p,v_1)). \end{equation}
This spectral sequence reduces to the long exact sequence 
\begin{equation}\label{eq les} \xymatrix{ 
\dots \ar[r] &  \pi_{k-2p+3}(THH(j;H\mathbb{F}_p) \ar[r]   & \pi_k(THH(j;j/(p,v_1)) \ar`r_l[ll] `l[dll] [dll] \\
 \pi_k(THH(j;H\mathbb{F}_p)) \ar[r] &  \pi_{k-2p+2}(THH(j;H\mathbb{F}_p)) \ar[r] &   \dots  }
\end{equation}
where two out of three terms are known. We claim that this exact sequence demonstrates that the $V(1)$ topological Hochschild-May  spectral sequence cannot collapse at $E_{2p-2}$. The author owes Eva H\"oning for giving some evidence that there must be a longer differential in personal communication, since the author originally had a faulty argument that said that the differential on $\lambda_2=(\mu_1)^{p-1}\sigma \tilde{v}_1$ was zero. 
\begin{prop} \label{prop dp-1}
There is a differential 
\[ d^{2p^2-5p+3} ((\mu_1)^{p-1}\sigma \tilde{v}_1)\dot{=}\alpha_1 \lambda_1\gamma_{p-1}(\sigma \alpha_1) \] 
in the $V(1)$ topological Hochschild-May  spectral sequence and no remaining differentials. 
\end{prop}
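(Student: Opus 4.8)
The plan is to pin down the asserted $d_{p-1}$ differential by playing the two independent computations of $V(1)_*(THH(j))$ against each other: the May spectral sequence of Proposition \ref{prop d1}, whose $E_2$-page is $E(\alpha_1,\lambda_1\gamma_{p-1}(\sigma\alpha_1),(\mu_1)^{p-1}\sigma\tilde v_1)\otimes P((\mu_1)^p)\otimes\Gamma(\sigma b)$, and the long exact sequence (\ref{eq les}) together with the known module $V(1)_*(THH(j;\ell))\cong E(\lambda_1',\lambda_2)\otimes P(\mu_2)\otimes\Gamma(\sigma b)$. First I would compare Euler characteristics / Poincar\'e series degree by degree: from (\ref{eq les}) one extracts the $V(1)_*$-module structure (or at least the graded dimensions) of $V(1)_*(THH(j))$, using that the map $THH(j;H\mathbb F_p)\to \Sigma^{2p-2}THH(j;H\mathbb F_p)$ in the long exact sequence is multiplication by (a unit times) $v_1$, which is zero on $V(1)_*(THH(j;\ell))$ since that is an $H\mathbb F_p$-module; hence the sequence splits into short exact sequences and $V(1)_*(THH(j))$ has a known graded dimension in each degree. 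Then I would observe that this dimension is \emph{strictly smaller} than the dimension of the $E_2$-page of Proposition \ref{prop d1} — so some further differential must occur — and by a bidegree bookkeeping argument identify the unique possibility: a $d_{p-1}$ hitting the three-fold product $\alpha_1\lambda_1\gamma_{p-1}(\sigma\alpha_1)$.

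The key steps, in order, would be: (1) run the long exact sequence (\ref{eq les}), using the $H\mathbb F_p$-module structure of $THH(j;H\mathbb F_p)$ to kill the connecting map, to get the graded $\mathbb F_p$-dimension of $V(1)_n(THH(j))$ for all $n$; (2) compute the graded dimension of the $E_2$-page of the $V(1)$-THH-May spectral sequence from Proposition \ref{prop d1}, noting the bidegrees: $|\alpha_1|=2p-3$, $|\lambda_1\gamma_{p-1}(\sigma\alpha_1)| = (2p-1)+(p-1)(2p-2)$, $|(\mu_1)^{p-1}\sigma\tilde v_1|=(p-1)(2p)+(2p-1)$, $|(\mu_1)^p|=2p^2$, $|\sigma b|=2p$; (3) check that in the relevant total degree, the only pair of generators with a May-filtration difference of exactly $p-1$ — matching the differential convention $d_r:E^r_{s,t}\to E^r_{s-1,t+r}$ — is $(\mu_1)^{p-1}\sigma\tilde v_1$ in the source and $\alpha_1\lambda_1\gamma_{p-1}(\sigma\alpha_1)$ in the target, and verify degrees match ($(p-1)(2p)+(2p-1) = (2p-3)+(2p-1)+(p-1)(2p-2)$, i.e. both equal $2p^2+2p-4$); (4) conclude, via the Leibniz rule and multiplicativity, that the differential must be exactly as stated (up to a unit), since a nonzero $d_{p-1}$ on $(\mu_1)^{p-1}\sigma\tilde v_1$ forces this; (5) verify that the resulting $E_p=E_\infty$-page matches the module computed in step (1), so no further differentials are possible and the spectral sequence collapses at $E_p$.

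The main obstacle I anticipate is step (4): ruling out that the $d_{p-1}$ could instead be \emph{zero} with the required dimension drop accounted for by some other, perhaps longer, differential, or conversely that there is a shorter differential on a power of $(\mu_1)^{p-1}\sigma\tilde v_1$ times $(\mu_1)^p$ that I have overlooked. Handling this cleanly requires a careful exhaustion of the bidegrees of all products of the $E_2$-generators, using the filtration gradings (the May filtration of $\alpha_1,\lambda_1,\sigma\alpha_1,\mu_1,\tilde v_1,\sigma\tilde v_1,\epsilon_1$ inherited from Theorem \ref{filt}/Example \ref{exm J}); the Leibniz rule then propagates the single differential across the whole ring, and multiplicativity forces $d_{p-1}((\mu_1)^{p-1}\sigma\tilde v_1\cdot(\mu_1)^{pk}) \doteq \alpha_1\lambda_1\gamma_{p-1}(\sigma\alpha_1)(\mu_1)^{pk}$, which is exactly what is needed to bring the $E_p$-page into agreement with (\ref{eq les}). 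A secondary subtlety is confirming the correspondence of names between the two computations — i.e.\ that $(\mu_1)^{p-1}\sigma\tilde v_1$, $(\mu_1)^p$, $\sigma b$ converge to $\lambda_2$, $\mu_2$, $\sigma b$ respectively — which I would check by naturality of the comparison map $f$ to the $H\mathbb F_p\wedge V(1)$-THH-May spectral sequence used in Proposition \ref{prop d1}, tracing the classes $\sigma\tilde\xi_1^p$, $\sigma\tilde\xi_2$, $\sigma\tilde\tau_2$ through both spectral sequences.
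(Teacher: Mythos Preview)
Your plan has a genuine gap at the very first step, and it happens to be exactly the point where the paper does the real work.

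You assert that the connecting map in the long exact sequence~(\ref{eq les}) is ``multiplication by (a unit times) $v_1$'' and hence vanishes on the $H\mathbb{F}_p$-module $THH(j;H\mathbb{F}_p)$, so that the sequence splits. This is false. The filtration $\Sigma^{2p-3}H\mathbb{F}_p \to j/(p,v_1)\to H\mathbb{F}_p$ is obtained by smashing $V(1)$ with the fiber sequence $j\to \ell \xrightarrow{1-\psi_q}\Sigma^{2p-2}\ell$, so the boundary map is induced by $1-\psi_q$, not by $v_1$; it is a $j$-module map but \emph{not} an $H\mathbb{F}_p$-module map, and there is no reason for it to vanish. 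In fact the paper shows it does not: it identifies the induced map in mod $p$ homology with the dual of $P^1$, and then checks via the Hurewicz square that $\lambda_2\mapsto \lambda_1'$ nontrivially.

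Worse, if the boundary map \emph{did} vanish as you claim, your dimension count would detect nothing. The split answer has Poincar\'e series
\[
(1+t^{2p-3})\cdot (1+t^{2p^2-2p+1})(1+t^{2p^2-1})\cdot\frac{1}{1-t^{2p^2}}\cdot \Gamma(\sigma b),
\]
which is \emph{exactly} the Poincar\'e series of the $E_2$-page $E(\alpha_1,\lambda_1\gamma_{p-1}(\sigma\alpha_1),(\mu_1)^{p-1}\sigma\tilde v_1)\otimes P((\mu_1)^p)\otimes\Gamma(\sigma b)$. So under your hypothesis the $E_2$-page already has the right size and you would be forced to conclude there is no further differential --- the opposite of what you are trying to prove. (There is also an arithmetic slip in your step (3): the source and target have internal degrees $2p^2-1$ and $2p^2-2$, not $2p^2+2p-4$; and $|\sigma b|=2p^2-2p$, not $2p$.)

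The paper's argument runs in the opposite direction. It observes that the stated $d_{p-1}$ is the \emph{only} possible remaining differential for bidegree reasons, so the question reduces to whether the spectral sequence collapses at $E_2$ or not. Collapse at $E_2$ would force the exact sequence~(\ref{eq les}) to be short exact in degrees $2p^2-1$ and $2p^2-2$, i.e.\ the boundary $\mathbb{F}_p\{\lambda_2\}\to\mathbb{F}_p\{\lambda_1'\}$ would have to vanish. The paper then computes this boundary directly: it is the map induced by $1-\psi_q\colon \ell\to\Sigma^{2p-2}\ell$, which acts as $P^1$ in cohomology; chasing the Hurewicz square one sees $g(\lambda_2)=\sigma\tilde\xi_2-\bar\xi_1\sigma\tilde\xi_1^p-\bar\tau_1\sigma b\mapsto \sigma\tilde\xi_1^p$, and $h^{-1}(\sigma\tilde\xi_1^p)=\lambda_1'$. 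Hence the boundary is nonzero, collapse is impossible, and the unique possible differential must fire.
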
 
\begin{figure}
\begin{center}
\tiny
\[ 
\begin{sseq}[entrysize=.35cm,ylabelstep=2,ylabels={0;;2p-3;;\vdots;;2p^2-5p+3;;2p^2-3p;;2p^2-p-3;;\vdots;;4p^2-8p+3;;4p^2-6p;;4p^2-4p-3;;},xlabelstep=18,xlabels={0;;;;;;;;;;;;;;;;;;2p^2;;;;;;;;;;;;;;;;;;4p^2}]{37}{20}
\ssdrop{1}

\ssmove{3}{2}
\ssdrop{\alpha_1}

\ssmove{9}{6}
\ssdrop{\sigma b}
\ssmove{1}{-2}
\ssdrop{\lambda_1^{\prime}}
\ssmove{1}{1}
\ssmove{1}{3}
\ssdrop{\alpha_1 \sigma b }
\ssmove{1}{-2}
\ssdrop{\alpha_1\lambda_1^{\prime}} \ssname{a} 

\ssmove{1}{-6}
\ssdrop{\lambda_2} \ssname{b}
\ssgoto a \ssgoto b \ssstroke
\ssmove {1} {-2} 
\ssdrop{\mu_2}
\ssmove{2}{4}
\ssdrop{\alpha_1\lambda_2}
\ssmove{1} {-2}
\ssdrop{\alpha_1\mu_2}
\ssmove 3 {14}

\ssdrop{\gamma_2(\sigma b)}
\ssmove{1} {-2} 
\ssdrop{\sigma b\lambda_1^{\prime}}

\ssmove{2}{4}
\ssdrop{\alpha_1\gamma_2(\sigma b)}
\ssmove 1 {-2} 
\ssdrop{\alpha_1\lambda_1^{\prime}\sigma b } \ssname{c} 
\ssmove{1}{-6}
\ssdrop{\lambda_2\sigma b}\ssname{d} 
\ssgoto c \ssgoto d \ssstroke
\ssmove 1 {-2} 
\ssdrop{\begin{array}{c}\sigma b \mu_2 \\ \lambda_1^{\prime}\lambda_2\end{array}}
\ssmove{1} {-2}
\ssdrop{\lambda_1^{\prime}\mu_2}
\ssmove 1 6 
\ssdrop{\alpha_1\lambda_2 \sigma b}
\ssmove 1 {-2}
\ssdrop{\begin{array}{c}\alpha_1\sigma b \mu_2 \\ \alpha_1\lambda_1^{\prime}\lambda_2\end{array}}
\ssmove{1}{-2}
\ssdrop{\alpha_1\lambda_1'\mu_2} \ssname{f} 
\ssmove{1}{-6}
\ssdrop{\lambda_2\mu_2} \ssname{e}
\ssgoto e \ssgoto f \ssstroke
\ssmove{2}{-8}
\ssdrop{\mu_2^2}
\end{sseq}
\]
\end{center}
\caption{The $E^{2p^2-5p+3}_{s,t}$-page of the $V(1)$ topological Hochschild-May  spectral sequence for $p\ge 5$ for $s\le 4p^2$ and all $t$. When $p=3$, the only difference is that there is an additional class $\gamma_p(\sigma b)$ in bidegree $(2p^3-2p, 2p^3-3p^2)$.} \label{sseq:2} 
\end{figure}
\begin{proof} 

The spectral sequence \eqref{ss filt coeff} may be expressed as the long exact sequence \eqref{eq les}, or in other words, the diagram 
\[ \xymatrix{  \Sigma^{2p-3} E(\lambda_1', \lambda_2)\otimes P(\mu_2)\otimes  \Gamma(\sigma b)  \ar[r] & E(\alpha_1, \lambda_1\gamma_{p-1}(\sigma \alpha_1), (\mu_1)^{p-1}\sigma \tilde{v}_1 ) \otimes P((\mu_1)^p)\otimes \Gamma(\sigma b)  \ar[d] \\ 
&E(\lambda_1', \lambda_2)\otimes P(\mu_2)\otimes  \Gamma(\sigma b)  \ar@{-->}[ul]  } \]
where the dotted arrow indicates a shift in degree by $1$. In particular, in degree $2p^2-1$ and $2p^2-2$ we have the exact sequence
\[    0 \lra  \mathbb{F}_p\{(\mu_1)^{p-1}\sigma \tilde{v}_1\} \lra   \mathbb{F}_p\{\lambda_2\} \lra \mathbb{F}_p\{ \lambda_1' \}\lra \mathbb{F}_p\{\alpha_1 \lambda_1\gamma_{p-1}(\sigma\alpha_1) \} \lra 0. \]
We can therefore determine if there should be a differential in the $V(1)$ topological Hochschild-May  spectral sequence, as stated, by determining if the map 
$ \mathbb{F}_p\{\lambda_2 \} \lra \mathbb{F}_p\{ \lambda_1' \} $
is nontrivial. 
To determine this, we note that the boundary map is exactly the map 
\[ V(1)_*(THH(j; \ell )) \lra  V(1)_*(THH(j;\Sigma ^{2p-2}\ell )) \]
induced by the map $\ell \lra \Sigma^{2p-2}\ell$ given by $1-\psi_q$  where $q$ is the $q$-th Adams operation. This map induces multiplication by $P^1$ in homology 
\[ \xymatrix{ H\mathbb{F}_p^*(\Sigma^{2p-2}\ell) = \Sigma^{2p-2} \mathcal{A}//E(1) \ar[r]^(.6){P^1} &\mathcal{A}//E(1)=H\mathbb{F}_p^*(\ell) .}\]
In the dual, we therefore know that the map 
\[  \xymatrix{ P(1)\co {H\mathbb{F}_p}_*(\ell) = ( \mathcal{A}//E(1) )_* \ar[r] &\Sigma^{2p-2}(\mathcal{A}//E(1))_*={H\mathbb{F}_p}_*(\Sigma^{2p-2}\ell) }\]
sends classes of the form $\bar{\xi}_1y$ to $y$ and the map sends all other classes to zero. The same will therefore be true for the induced map, which we also denote $P(1)$, in the diagram
\[  
\xymatrix{ 
{H\mathbb{F}_p}_*(V(1)\wedge THH(j;\ell)) \ar[r] & {H\mathbb{F}_p}_*(V(1)\wedge THH(j;\Sigma^{2p-2}\ell))  \\ 
 {H\mathbb{F}_p}_*(V(1)\wedge THH(j) )\otimes_{ {H\mathbb{F}_p}_*(j) } {H\mathbb{F}_p}_*(\ell)  \ar[u]^{\cong}  \ar[r]^(.45){P(1) }&    {H\mathbb{F}_p}_*(V(1)\wedge THH(j) )\otimes_{{H\mathbb{F}_p}_*(j)} {H\mathbb{F}_p}_*(\Sigma^{2p-2} \ell)     \ar[u]^{\cong}  ,    }\] 
in particular $\bar{\xi}_1\sigma \tilde{\xi}_1^p$ maps to $\sigma\tilde{\xi}_1^p$. We therefore examine the square 
\[ 
\xymatrix{
V(1)_{2p^2-1}(THH(j;\ell))\ar[r] \ar[d]^{g} & V(1)_{2p^2-2} (THH(j; \Sigma^{2p-3}\ell)) \ar[d]^{h} \\ 
(H\mathbb{F}_p\wedge V(1))_{2p^2-1}(THH(j;\ell))\ar[r] & (H\mathbb{F}_p\wedge V(1))_{2p^2-2} (THH(j; \Sigma^{2p-3}\ell)),} \]
which is isomorphic to 
\[ 
\xymatrix{ 
\mathbb{F}_p\{\lambda_2\} \ar[d]^{g} \ar[r] & \mathbb{F}_p\{\lambda_1'\} \ar[d]^h \\
\mathbb{F}_p\{\sigma \tilde{\xi}_2, \bar{\xi}_1\sigma \tilde{\xi}_1^p, \bar{\tau}_1\sigma b, \bar{\xi}_1\bar{\tau}_0 \sigma b\} \ar[r] & \mathbb{F}_p\{\sigma\tilde{\xi}_1^p, \bar{\tau}_0\sigma b\} . 
}\]
As stated in the proof of Proposition \ref{prop d1}, the vertical maps are defined by 
\begin{center}
$g( \lambda_2 ) =  \sigma \tilde{\xi}_2- \bar{\xi}_1\sigma \tilde{\xi}_1^p -\bar{\tau}_1\sigma b, $\\
$h( \lambda_1' )= \sigma\tilde{\xi}_1^p-\bar{\tau}_0\sigma b.$ \\
\end{center}
 The bottom horizontal map sends the class in the image of $\sigma \tilde{\xi}_2$ to the class $\sigma \tilde{\xi}_1^p$; i.e., 
 \[ \xymatrix{\sigma \tilde{\xi}_2- \bar{\xi}_1\sigma \tilde{\xi}_1^p -\bar{\tau}_1\sigma b \ar@{|->}[r] & \sigma \tilde{\xi}_1^p. }\] 
 Since the inverse image of the Hurewicz map evaluated on this element is 
 \[ h^{-1}(\sigma \tilde{\xi}_1^p)=h^{-1}(\sigma \tilde{\xi}_1^p-\bar{\tau}_0\sigma b)=\lambda_1'. \]
 This proves that the top horizontal map is nontrivial and therefore, there must be a differential 
\[ d^{2p^2-5p+3}((\mu_1)^{p-1}\sigma \tilde{v}_1)\dot{=} \alpha_1 \lambda_1\gamma_{p-1}(\sigma\alpha_1) \]
as stated.
\end{proof} 
\begin{rem}
Due to Oka \cite[Thm. 4.4]{MR614695}, the obstruction to a ring structure on $V(1)$ at the prime $3$ is a composite of maps including the composite map 
\[\xymatrix{ \beta_1\co \Sigma^{11}S \ar[r] & \Sigma^{11}S/p \ar[r]^{\beta_{(1)}}  & S/p \ar[r] &  \Sigma^1 S, }\] 
however we can compute that the induced map $\Sigma^{11}j\rightarrow \Sigma j$ is null homotopic and hence the obstruction vanishes after smashing with $j$. Thus, $V(1)\wedge j$ and hence $V(1)\wedge THH(j)$ are ring spectra, so the ring structure on $V(1)_*THH(j)$ is also correct at the prime $3$. This type of argument is also used in \cite{MR2183525} in the case of $V(1)\wedge ku$. 
\end{rem}

\begin{thm}\label{main thm}
 Let $p>2$ be a prime number and let $V(1)$ be the cofiber of the periodic self-map $v_1\co \Sigma^{2p-2}S/p \rightarrow S/p.$ Then there is an isomorphism
\[ V(1)_*(THH(j))\cong P(\mu_2)\otimes \Gamma(\sigma b) \otimes \mathbb{F}_p\{\alpha_1, \lambda_1', \lambda_2\alpha_1, \lambda_2\lambda_1', \lambda_2\lambda_1'\alpha_1 \}  \]
where the products between the classes 
\[ \{\alpha_1, \lambda_1', \lambda_2\alpha_1, \lambda_2\lambda_1', \lambda_2\lambda_1'\alpha_1 \} \]
are zero except for 
\[ \alpha_1\cdot \lambda_2\lambda_1'=\lambda_1'\cdot \lambda_2\alpha_1 =\lambda_2\lambda_1'\alpha_1.  \] 
\end{thm}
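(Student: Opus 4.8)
The plan is to run the $V(1)$-THH-May spectral sequence~(\ref{Ethhmayj}) out to its $E_\infty$-page and then resolve extensions. By Proposition~\ref{prop d1} the $E_2$-page is
\[ E_2^{*,*}=E(\alpha_1,\lambda_1\gamma_{p-1}(\sigma\alpha_1),(\mu_1)^{p-1}\sigma\tilde{v}_1)\otimes P((\mu_1)^p)\otimes\Gamma(\sigma b); \]
for legibility I write $\lambda_1'$, $\lambda_2$, $\mu_2$ for the classes $\lambda_1\gamma_{p-1}(\sigma\alpha_1)$, $(\mu_1)^{p-1}\sigma\tilde{v}_1$, $(\mu_1)^p$, anticipating (via Proposition~\ref{prop d_1}) that these and their surviving products detect the classes of those names in $V(1)_*THH(j)$. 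By Proposition~\ref{prop dp-1} the only remaining differential is $d_{p-1}(\lambda_2)\dot{=}\alpha_1\lambda_1'$, and by the same proposition $d_{p-1}$ is trivial on $\alpha_1$, $\lambda_1'$, $\mu_2$ and $\Gamma(\sigma b)$. Since the May filtration is multiplicative and the differentials obey the Leibniz rule (recalled after~(\ref{thh map ss})), $d_{p-1}$ is a derivation and $E_\infty=E_p$ is the homology of $E_2$ with respect to it; convergence is strong because $\mathbb{J}$ is the Whitehead filtration of $j$ (Theorem~\ref{filt}), so $\pi_k(\tau_{\ge i}j)=0$ for $k<i$.

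To compute this homology, observe that the exterior factor $E(\alpha_1,\lambda_1',\lambda_2)$ has $d_{p-1}$-homology spanned by $1,\alpha_1,\lambda_1',\lambda_2\alpha_1,\lambda_2\lambda_1',\lambda_2\lambda_1'\alpha_1$: the element $\alpha_1\lambda_1'=d_{p-1}(\lambda_2)$ is a boundary, $\lambda_2$ is not a cycle, and every other monomial is a cycle that is not a boundary since $\alpha_1^2=(\lambda_1')^2=0$. Tensoring with the permanent cycles $P(\mu_2)\otimes\Gamma(\sigma b)$ gives
\[ E_\infty^{*,*}\cong P(\mu_2)\otimes\Gamma(\sigma b)\otimes\mathbb{F}_p\{1,\alpha_1,\lambda_1',\lambda_2\alpha_1,\lambda_2\lambda_1',\lambda_2\lambda_1'\alpha_1\}. \]
Since every page is an $\mathbb{F}_p$-vector space there are no additive extension problems, so this is $V(1)_*THH(j)$ as a graded vector space, which is the additive content of the theorem; it agrees with what one extracts from the long exact sequence~(\ref{eq les}) using the known value of $V(1)_*THH(j;\ell)$ and the connecting map, which is shown to send $\lambda_2\mapsto\lambda_1'$ in the proof of Proposition~\ref{prop dp-1}.

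For the multiplicative structure (recall that $V(1)\wedge THH(j)$ is a ring spectrum by the Remark after Proposition~\ref{prop dp-1}), most products are visible already on $E_\infty$. The nonzero relations are forced: $\alpha_1\cdot\lambda_2\lambda_1'=\lambda_1'\cdot\lambda_2\alpha_1=\lambda_2\lambda_1'\alpha_1$ is just $\alpha_1\lambda_1'\lambda_2$ read three ways. The vanishing relations are visible too: $\alpha_1\cdot\lambda_1'=0$ because $\alpha_1\lambda_1'=d_{p-1}(\lambda_2)$ is a boundary on $E_\infty$, and every remaining pairwise product among the five generators contains a square of one of the exterior classes $\alpha_1$, $\lambda_1'$, $\sigma\tilde{v}_1$, hence is zero. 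The main obstacle is to rule out hidden multiplicative extensions --- a product that is zero, or equals a named generator, on $E_\infty$ but differs by a term of strictly higher May filtration in $V(1)_*THH(j)$. I would dispatch these by (i) a degree count: in most relevant total degrees the only $E_\infty$-monomial is the one already named, leaving no room for a correction (for instance $V(1)_*THH(j)$ vanishes in degree $2p^2-2=|\alpha_1\lambda_1'|$, so $\alpha_1\cdot\lambda_1'=0$ on the nose); and (ii) in the finitely many degrees where two $E_\infty$-monomials coexist, the ring map $V(1)_*THH(j)\to V(1)_*THH(j;\ell)$ together with exactness in~(\ref{eq les}) and the known ring structure of $V(1)_*THH(j;\ell)$. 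Combining the products read off $E_\infty$ with this extension analysis yields exactly the presentation in the theorem.
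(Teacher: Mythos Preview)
Your proof is correct and follows essentially the same approach as the paper: run the $V(1)$-THH-May spectral sequence using Propositions~\ref{prop d1} and~\ref{prop dp-1}, observe there are no further differentials for bidegree reasons, and rule out hidden multiplicative extensions. The paper's own proof is much terser (it simply cites the two propositions, appeals to the figure for the absence of further differentials, and asserts that ``the only possible hidden multiplicative extension is easily ruled out by a filtration argument''), so your more explicit computation of the $d_{p-1}$-homology and your degree-count/comparison-map analysis of extensions flesh out what the paper leaves to the reader.
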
 
\begin{proof} 
We can compute the $E^{2p^2-4p+2}$-page by Proposition \ref{prop d_1} and Proposition \ref{prop dp-1}. 
All the algebra generators are in the range $0\le s\le 4p^2$ depicted Figure \ref{sseq:2} except for $\gamma_{p^k}(\sigma b)$ for $k\ge 1$. The figure clearly shows that there no possible differentials $d^r_{s,t}$ for $r>2p^2-4p+2$ in the range $0\le s\le 4p^2$. 
The only remaining possible differentials are differentials with source $\gamma_{p^k}(\sigma b)$ for $k\ge 1$. There are no possible differentials with source $\gamma_{p^k}(\sigma b)$ by a bidegree argument, which we already proved in the proof of Proposition \ref{prop d1}. Therefore, there are no possible differentials with source $\gamma_{p^{k}}(\sigma b)$ and consequently the spectral sequence collapses at the $E_{2p^2-4p+2}$-page. The classes $\gamma_{p^{k}}(\sigma b)$ map to classes of the same name in the Hurewicz image. 
There are no hidden multiplicative extensions on the elements $(\gamma_{p^{k-1}}(\sigma b))^p$ for $k\ge 1$ because the only other classes in this topological degree are in lower filtration in the spectral sequence. To see this consider a general monomial 
\begin{equation}\label{monomial 2} \mu_2^\ell\gamma_j(\sigma b) \alpha_1^{e_1}(\lambda_1^{\prime})^{e_2}(\lambda_2\alpha_1)^{e_3}(\lambda_2\lambda_1')^{e_4}(\lambda_2\lambda_1'\alpha_1)^{e_5}\end{equation}
which is in topological degree 
\[ 2\ell p^2+ (2p^2-2p)j+(2p-3)e_1+(2p^2-2p+1)e_2+(2p^2+2p-4)^{e_3} + (4p^2-2p)e_4+(4p^2-3)e_5 \]
and May filtration
\[ (2p^2-3p)j+(2p-3)e_1+(2p^2-5p+3)e_2+(4p-3)e_3+(2p^2-3p)e_4+(2p^2-p-3)e_5\]
where $\ell,j\ge 0$, $e_i\in\{0,1\}$ for $1\le i\le 5$ and $\sum_{i=1}^5e_i=1$. Then, in order for this monomial to be in the same topological degree as $(\gamma_{p^{k-1}}(\sigma b))^p$ for some $k\ge 1$ the relation
\begin{equation}\label{rel 1} 2\ell p^2+ (2p^2-2p)j+(2p-3)e_1+(2p^2-2p+1)e_2+(2p^2+2p-4)^{e_3} + (4p^2-2p)e_4+(4p^2-3)e_5= (2p^2-2p)p^k\end{equation}
must hold. This already implies that $e_1=e_2=e_5=0$ since $\sum_{i=1}^5e_i=1$. Thus, the relation reduces to
\[2\ell p^2+ (2p^2-2p)j+(2p^2+2p-4)^{e_3} + (4p^2-2p)e_4-(2p^2-2p)p^k=0.\]
We also know that in order for the May filtration of the monomial \eqref{monomial 2} to be greater than that of $\gamma_{p^k}(\sigma b)$, the inequality
\begin{equation}\label{rel 2} (2p^2-3p)j+(4p-3)e_3+(2p^2-3p)e_4 > (2p^2-3p)p^k \end{equation}
must hold. By rearranging terms we see that 
\[(2p^2-2p)j+(2p^2-2p)e_4 -(2p^2-2p)p^k >-(4p-3)e_3+pj+pe_4+p^{k+1}\]
and therefore 
\[
\begin{array}{c} 2\ell p^2+ (2p^2-2p)j+(2p^2+2p-4)^{e_3} + (4p^2-2p)e_4-(2p^2-2p)p^k>  \\
2\ell p^2+(2p^2+2p-4)^{e_3} + 2p^2e_4-(4p-3)e_3+pj+pe_4+p^{k+1}
\end{array}
\]
where the right hand side of the inequality reduces to 
\[2\ell p^2+(2p^2-2p-1)^{e_3} + 2p^2e_4+pj+pe_4+p^{k+1}>0 \]
since $2p^2>2p+1$ for all primes $p$.
Thus, the relations \eqref{rel 1} and \eqref{rel 2} cannot both hold. 
Therefore, there are no elements in May filtration greater than that of $(\gamma_{p^{k-1}}(\sigma b))^p=0$, which has the same May filtration as $\gamma_{p^k}(\sigma b)$. 
This implies that there cannot be a hidden multiplicative extension on $(\gamma_{p^{k-1}}(\sigma b))^p$ or any of the other products $\gamma_{i}(\sigma b)\gamma_j(\sigma b)=0$ where $i+j=p^k$ for $i,j,k\ge 1$. These are the only other possible hidden multiplicative extensions, so no further hidden multiplicative extensions occur.  
\end{proof} 
\begin{rem}
This paper mainly considers mod $(p,v_1)$ homotopy of topological Hochschild homology of the connective cover of the $K(1)$-local sphere, but one could also consider mod $(p,v_1)$ homotopy of topological Hochschild homology of the non-connective $K(1)$-local sphere. This is trivial however, since 
\[ (S/p)_*(L_{K(1)}S)\cong P(v_1^{\pm 1})\otimes E(\alpha_1) \]
 and therefore the map induced by $v_1$ is multiplication by a unit on homotopy groups and therefore $V(1)_*L_{K(1)}S\cong 0$. Since $THH(L_{K(1)}S)$ is a $L_{K(1)}S$-algebra, the spectrum $V(1)\wedge THH(L_{K(1)}S)$ is contractible. It would still be interesting to study $S/p_*THH(L_{K(1)}S)$, but this would not help us approach $S/p_*K(L_{K(1)}S)$ because Theorem \ref{DGM thm} only holds for connective spectra. 
\end{rem} 

\bibliographystyle{plain}
\bibliography{sources}
\end{document}